\documentclass[english]{amsart}
\usepackage{amssymb,amsmath}
\usepackage{caption}
\usepackage{float}
\usepackage{graphicx,subfigure,epsfig}
\usepackage{epstopdf}
\usepackage{mathtools}
\usepackage{multicol}
\usepackage{fancybox}
\usepackage[usenames, dvipsnames]{color}
\newtheorem{theorem}{Theorem}[section]
\newtheorem{definition}{Definition}[section]
\newtheorem{lemma}{Lemma}[section]
\newtheorem{remark}{Remark}[section]

\newtheorem{example}{Example}[section]
\newtheorem{proposition}{Proposition}[section]

\newcommand{\restr}[1]{|_{#1}}

\begin{document}
\markboth{Kirandeep Kaur and Sandeep Sharama}{Extending  quasi-alternating links II}

\title{Extending Quasi-alternating Links II }

\author{Kirandeep Kaur}

\address{Department of Computational, Statistics and Data Analytics\\Guru Nanak Dev University, Punjab 143005, India}
\email{kirandeepoffical@gmail.com}

\author{Sandeep Sharama}
\address{Department of Computational, Statistics and Data Analytics\\Guru Nanak Dev University, Punjab 143005, India }
\email{sandeep.cse@gndu.ac.in}

\begin{abstract}
Champanerkar and Kofman \cite{champanerkar2009twisting} introduced  an innovative method for constructing quasi-alternating links by substituting a quasi-alternating crossing $c$ in a quasi-alternating link with a rational tangle of the same type.
This construction was later extended by Chbili and Kaur \cite{chbili2020quasiI} for any alternating tangle of same type.
 However, this approach does not generally apply to non-alternating tangles or alternating tangles of opposite types, although there are examples where it holds for alternating tangles of opposite type and certain non-alternating tangles.  This raises a natural question to find the conditions and specific tangles for which this approach is applicable. In this paper, we explore this question by  identifying several conditions under which this construction of new quasi-alternating links can still be valid for alternating tangles of opposite types. Additionally, we establish specific families of non-alternating tangles for which this construction is effective in creating new quasi-alternating links. 
\end{abstract}

\keywords{ Quasi-alternating links, Alternating tangles, Non-alternating tangles}
\makeatletter{\renewcommand*{\@makefnmark}{}
\footnotetext{2020 {\it Mathematics Subject Classifications.} 57K10}\makeatother}
\maketitle
%\tableofcontents
\section{Introduction}

\noindent Let $L$ be a non-split  alternating link in the three-sphere $S^3$ and $\Sigma(L)$ be  the branched double-cover of  $S^3$  along $L$. Ozsv\'{a}th and Szab\'{o} \cite{ozsvath2005heegaard} proved that the Heegaard Floer homology of $\Sigma(L)$   is determined  entirely  by   the determinant of the link, $\det(L)$.
Additionally, they observed that this notable homological property extends to a broader class of links, which  they called quasi-alternating. These links are defined recursively as follows.

\begin{definition} The set $\mathcal{Q}$ of quasi-alternating links is the smallest set satisfying the following properties:
\begin{itemize}
\item  The unknot is in $\mathcal{Q}$.
\item If a link $L$ has a diagram  containing a crossing $c$ such that
\begin{enumerate}
\item both smoothings of the diagram of $L$ at the crossing $c$, $L_0$ and $L_\infty$, as in Fig.~\ref{1},
belong to $\mathcal{Q}$,
\item $\det(L_0)$, $\det(L_\infty) \geq 1$ and  $\det(L) = \det(L_0) + \det(L_\infty)$;
 \end{enumerate}
then $L$ is in $\mathcal{Q}$.  The crossing $c$ is called a quasi-alternating crossing and $L$ is said to be quasi-alternating at $c$.
\end{itemize}
\end{definition}
 \begin{figure}[!ht] {\centering
 \subfigure[$L$]{\includegraphics[scale=0.8]{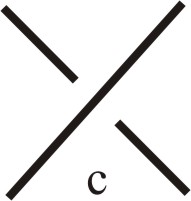} } \hspace{1cm}
\subfigure[$L_0$] {\includegraphics[scale=0.8]{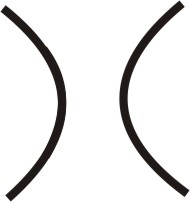}}\hspace{1cm}
\subfigure[$L_\infty$] {\includegraphics[scale=0.8]{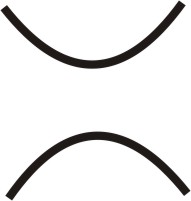}}
\caption{A link $L$ and its smoothings at crossing $c$, $L_0$ and $L_\infty$.}\label{1}}\end{figure}
\noindent With an elementary induction on the determinant of the link, this definition can be used to prove that non-split alternating links belong to $\mathcal{Q}$  \cite{ozsvath2005heegaard}. The knots $8_{20}$ and $8_{21}$ are the first examples of non-alternating quasi-alternating knots in the knot table.  Generally, determining whether a given link is quasi-alternating  through  this recursive definition is quite challenging.
Throughout  the past years, several obstruction criteria for identifying quasi-alternating links  have  been introduced. For instance, quasi-alternating links are proved to be  homologically-thin in  both Khovanov homology and link  Floer homology   \cite{manolescu2007khovanov}.  In addition, the reduced odd Khovanov homology group of any quasi-alternating link is thin and torsion-free \cite{ozsvath2013odd}. On the other hand, the branched double-cover of any quasi-alternating link is an $L$-space \cite{ozsvath2005heegaard}, and the signature of an oriented quasi-alternating  link $L$ equals minus four times the Heegaard Floer correction term of $(\sum(L), t_0)$ \cite{lisca2015signatures}.
 The behavior of polynomial invariants of quasi-alternating links has been also  investigated  leading to obstruction criteria  involving the Q-polynomial and the two-variable Kauffman polynomial  \cite{qazaqzeh2015new,teragaito2014quasi,teragaito2015quasi}.\\
In \cite{champanerkar2009twisting}, Champanerkar and  Kofman  presented an intriguing approach to constructing  new quasi-alternating links from existing ones. They  demonstrated that a  new quasi-alternating link can be obtained by
 replacing a quasi-alternating crossing by an alternating rational tangle of same type. This approach has been extended  to algebraic tangles in \cite{qazaqzehchbiliQublan20014} and any alternating tangles of same type in \cite{chbili2020quasiI}. 
 It has also been used to classify quasi-alternating Montesinos links \cite{qazaqzehchbiliQublan20014}.\\
   However, this construction is generally not applicable to alternating tangles of opposite type or non-alternating tangles. 
  Examples shown in Fig.~\ref{exm:In1}(b) and \ref{exm:In2}(b) illustrate that replacing a quasi-alternating crossing in quasi-alternating links depicted in Fig.~\ref{exm:In1}(a) and \ref{exm:In2}(a) with a non-alternating tangle and an alternating tangle of opposite type, respectively, results in links that are not quasi-alternating. Conversely, examples illustrated in Fig.~\ref{exm:8_1 & 11n}(a) and \ref{Exam:2Last}(b) indicate that the construction can still be valid in specific cases. This raises a natural question: \\
  
\noindent $''$ What are the necessary conditions and types of tangles for which this construction is applicable?$"$

\begin{figure}[!ht] {\centering
 \subfigure[ $(2,8)$-torus link] {\includegraphics[scale=.35]{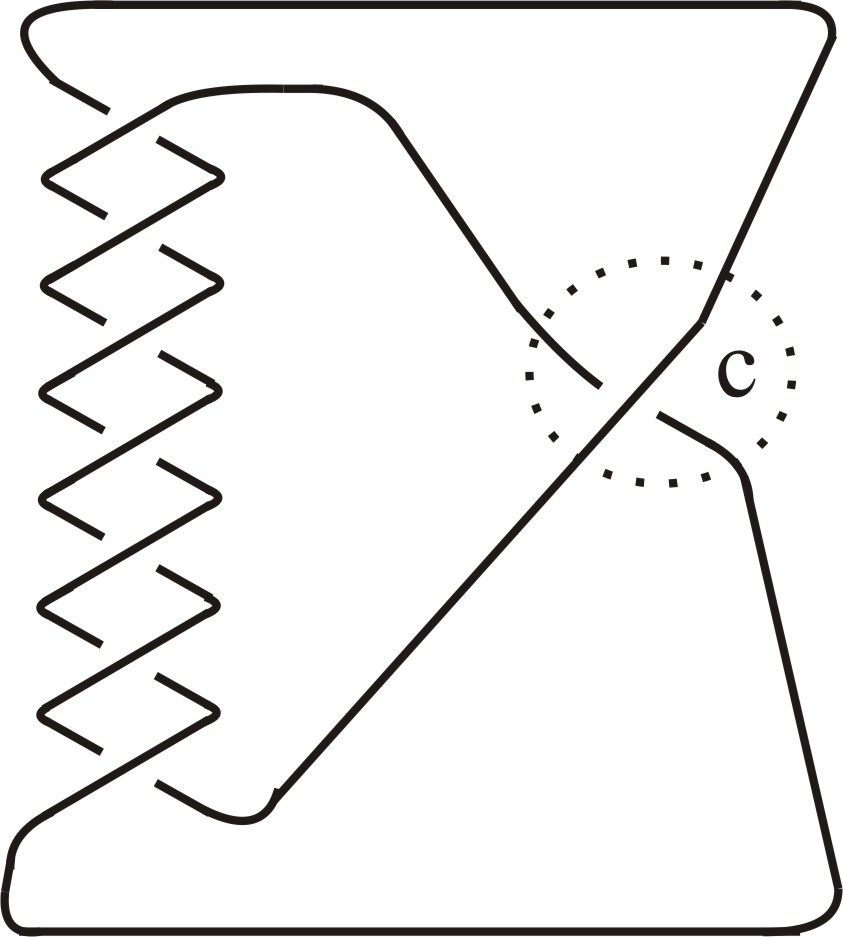} } \hspace{1.5cm}
\subfigure[Non quasi-alternating  link] {\includegraphics[scale=.35]{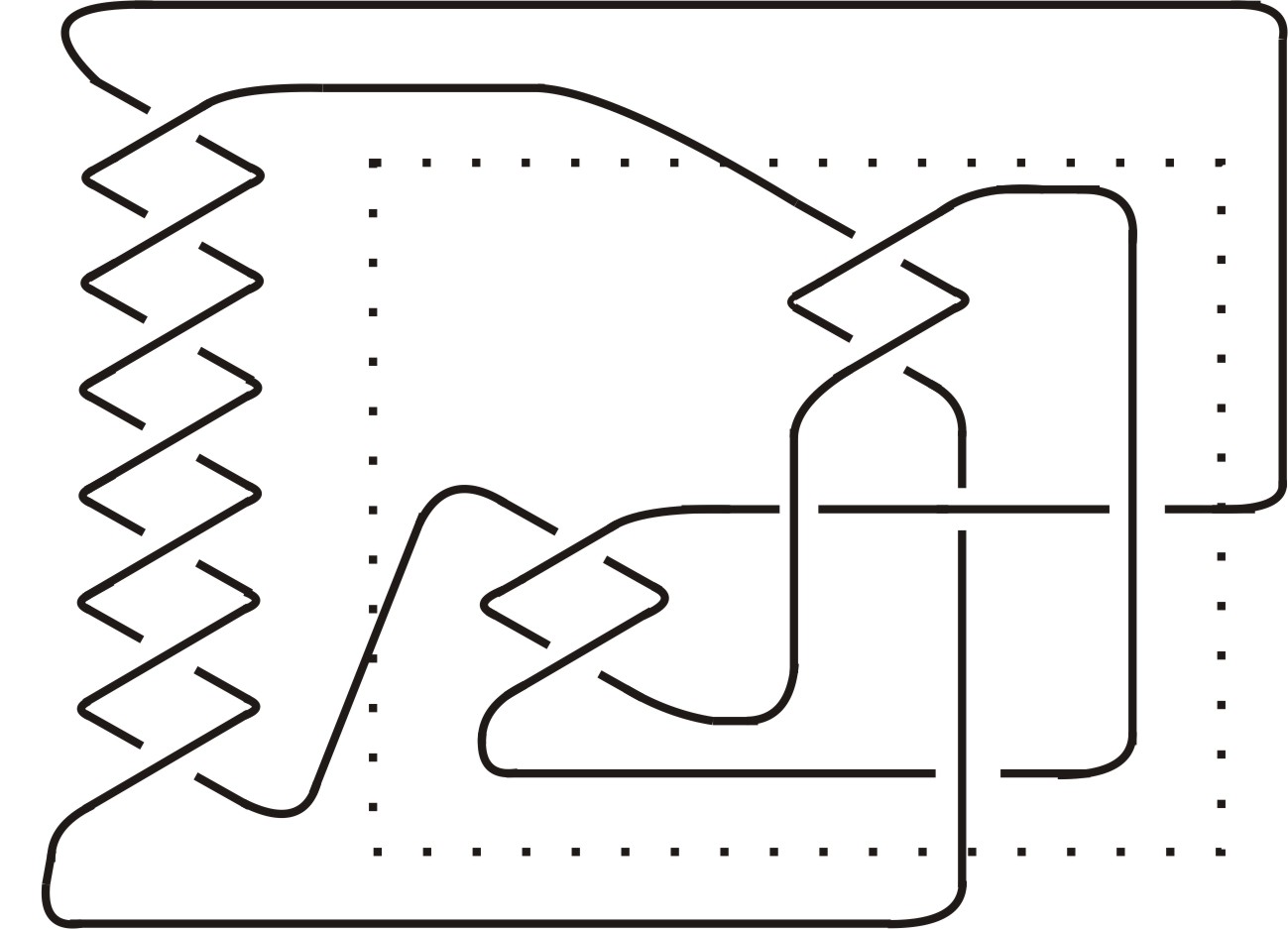}\hspace{.2cm}} \label{exm:In1c}
\caption{ A non quasi-alternating link obtained from $(2,8)$-torus link by extending quasi-alternating crossing $c$ to a non-alternating.}\label{exm:In1}}
\end{figure}
\begin{figure}[!ht] {\centering
 \subfigure[$8_{21}$] {\includegraphics[scale=.55]{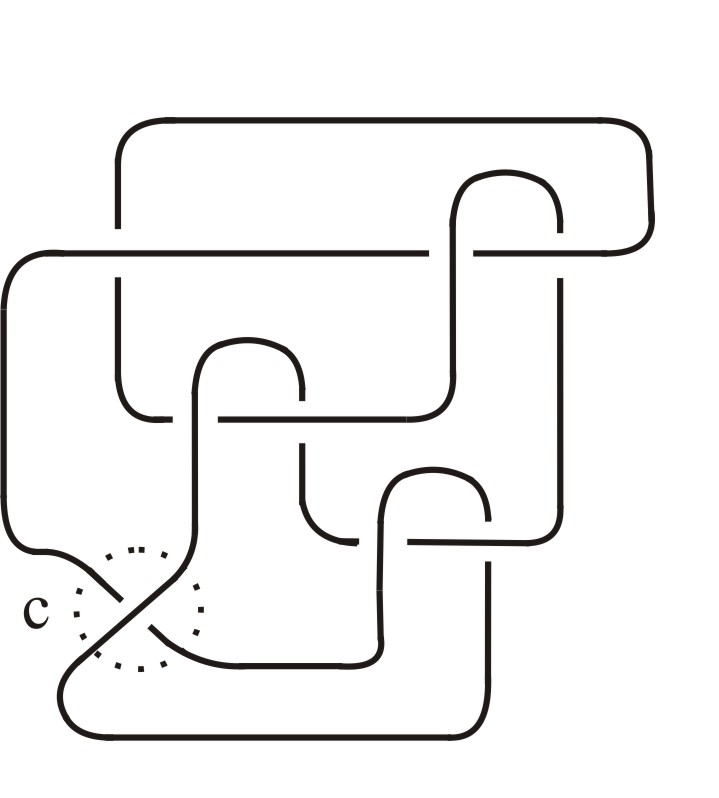} } \hspace{1cm}
\subfigure[$9_{42}$] {\includegraphics[scale=.55]{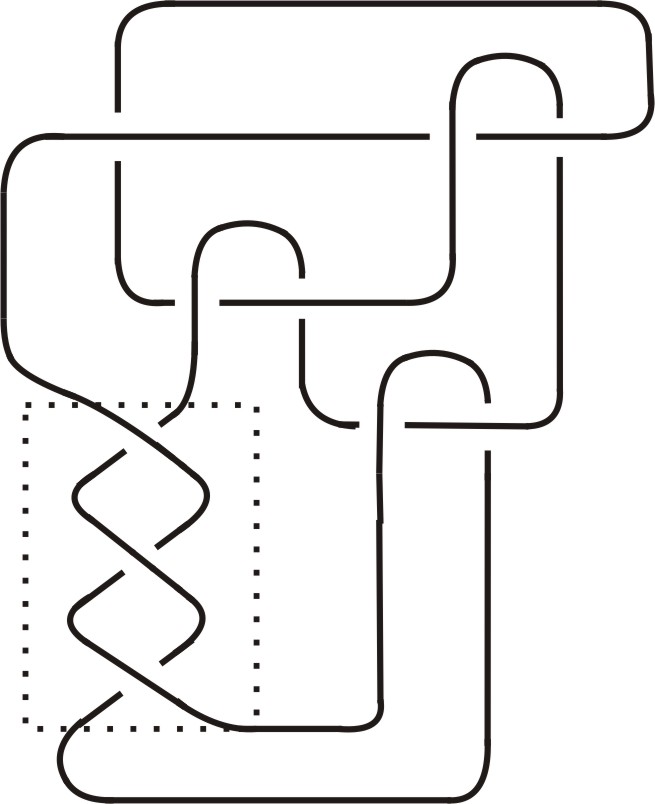}} 
\caption{A non quasi-alternating knot $9_{42}$ obtained from $8_{21}$ knot by replacing a quasi-alternating crossing $c$ to an alternating tangle of opposite type.}\label{exm:In2}}
\end{figure}
\noindent In this paper, we explore this question by identifying specific conditions and establishing certain classes of alternating tangles of opposite type for which the construction is valid. 
We also examine several families of non-alternating tangles where this construction remains applicable. Through examples, we support our findings and and illustrate cases where the specified conditions do not apply, leading to links that fail to be quasi-alternating. Furthermore, we demonstrate the existence of link diagrams that satisfy the specified conditions.

\noindent This construction proves to be a valuable tool for classifying quasi-alternating links. One significant application of our approach is the ability to construct non-alternating quasi-alternating links from existing alternating links, an example is illustrated in Fig.~\ref{Exam:2Last}.  While,  such a construction  is not possible using the earlier construction rules presented in \cite{champanerkar2009twisting,qazaqzehchbiliQublan20014,chbili2020quasiI}. Therefore, this work extends the previous studies by Champanerkar and Kofman \cite{champanerkar2009twisting} and their generalizations in \cite{qazaqzehchbiliQublan20014,chbili2020quasiI}.

\noindent Diagrams play an significant role in constructing new quasi-alternating links from existing ones using this technique.  For instance, the $11n_{90}$ knot, shown in Fig.~\ref{exm:8_1 & 11n}(b), is a quasi-alternating knot obtained by modifying $L$, as shown in Fig.~\ref{exm:43}. This modification involves replacement of the quasi-alternating crossing $c$  with three vertical half twists of opposite type. However, the minimal crossing number of the knot $L$ shown in Fig.~\ref{exm:43} is no greater seven. This means that it is impossible to construct a 11 crossing knot or link  from a quasi-alternating knot having at most seven crossings by replacing any quasi-alternating  crossing with three half twists or any tangle with three crossings.

\noindent Furthermore, the knot diagram $L$ shown in Fig.~\ref{exm:43} is obtained from the figure eight knot by replacing crossing $c$ with a non-alternating tangle diagram, see Fig.~\ref{exm:43org}. However, the replaced tangle posses an alternating tangle diagram with four crossings. Thus, the knot diagram $L$ shown in Fig.~\ref{exm:43} cannot be obtained  from the figure eight knot by replacing a quasi-alternating crossing with a tangle of four crossings.  This illustrates the significant role that tangle diagrams play in the construction of quasi-alternating links.

\noindent Therefore, in this paper,  we treat non-alternating tangles as non-alternating tangle diagrams. Additionally, the quasi-alternating crossing $c$ considered in this paper is of type shown in Fig.~\ref{1}(a). 
 If the crossing does not initially have this form, it can be transformed into this type by applying a simple isotopy in space. 
 
\begin{figure}[!ht] {\centering
 \includegraphics[scale=.55]{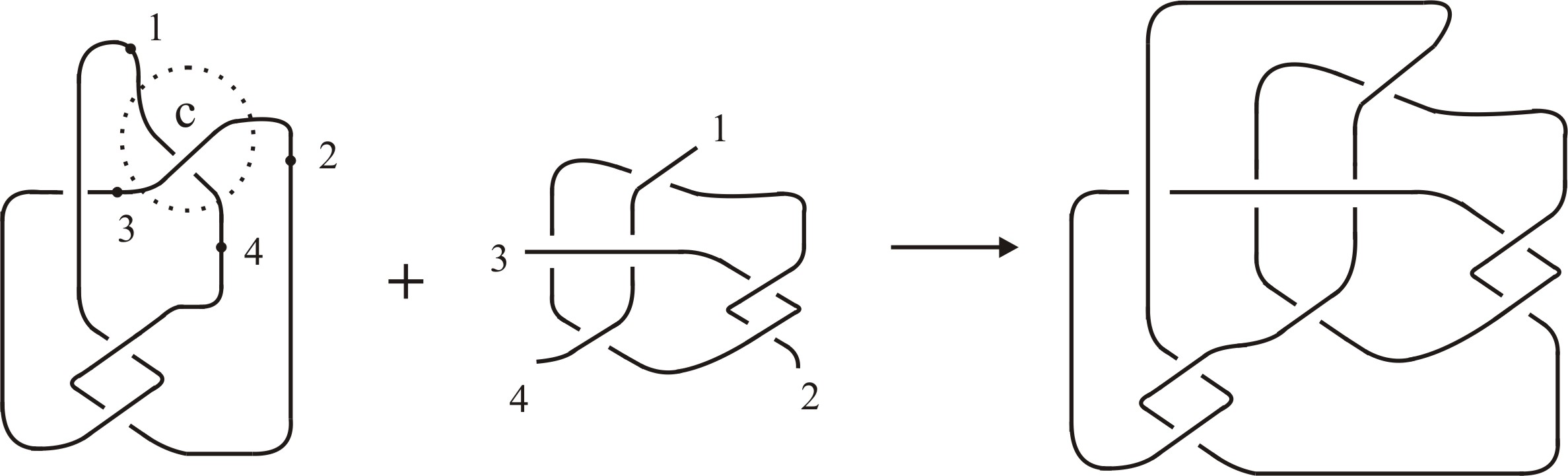}  
\caption{A knot obtained from figure eight knot by replacing a non-alternating tangle diagram.}\label{exm:43org}}
\vspace{-.1cm}
\end{figure} 

\noindent This paper is organized as follows: Section 2 contains the preliminaries that
are required to prove the main results of the paper.  In Section 3, we present the main results of this paper, and provide examples supporting our results and counter examples.  We conclude this paper in Section 4 by discussing the work accomplished, the difficulties encountered, and potential future extensions of this research.  

\section{Background and notations}
\noindent  This section will begin with a review of key definitions and results related to quasi-alternating links, which will be essential for the subsequent discussions in the paper.

\noindent Two link diagrams are said to be equivalent if one can be deformed into another through a finite sequence of classical Reidemeister moves RI, RII, RIII, as shown in Fig.~\ref{fig-rei}.

\noindent A detour move is a consequence of  Reidemeister moves which allows to move a segment of a link diagram containing only either over crossings or under crossings, freely in the plane.  When relocating this segment to a different position in the plane, all new crossings that occur should be marked as over or under crossings based on the original crossings of the segment, wherever it cuts across the link diagram, as shown in see Fig.~\ref{detmove}.

\begin{figure}[!ht] 
\centering
\includegraphics[scale=0.4]{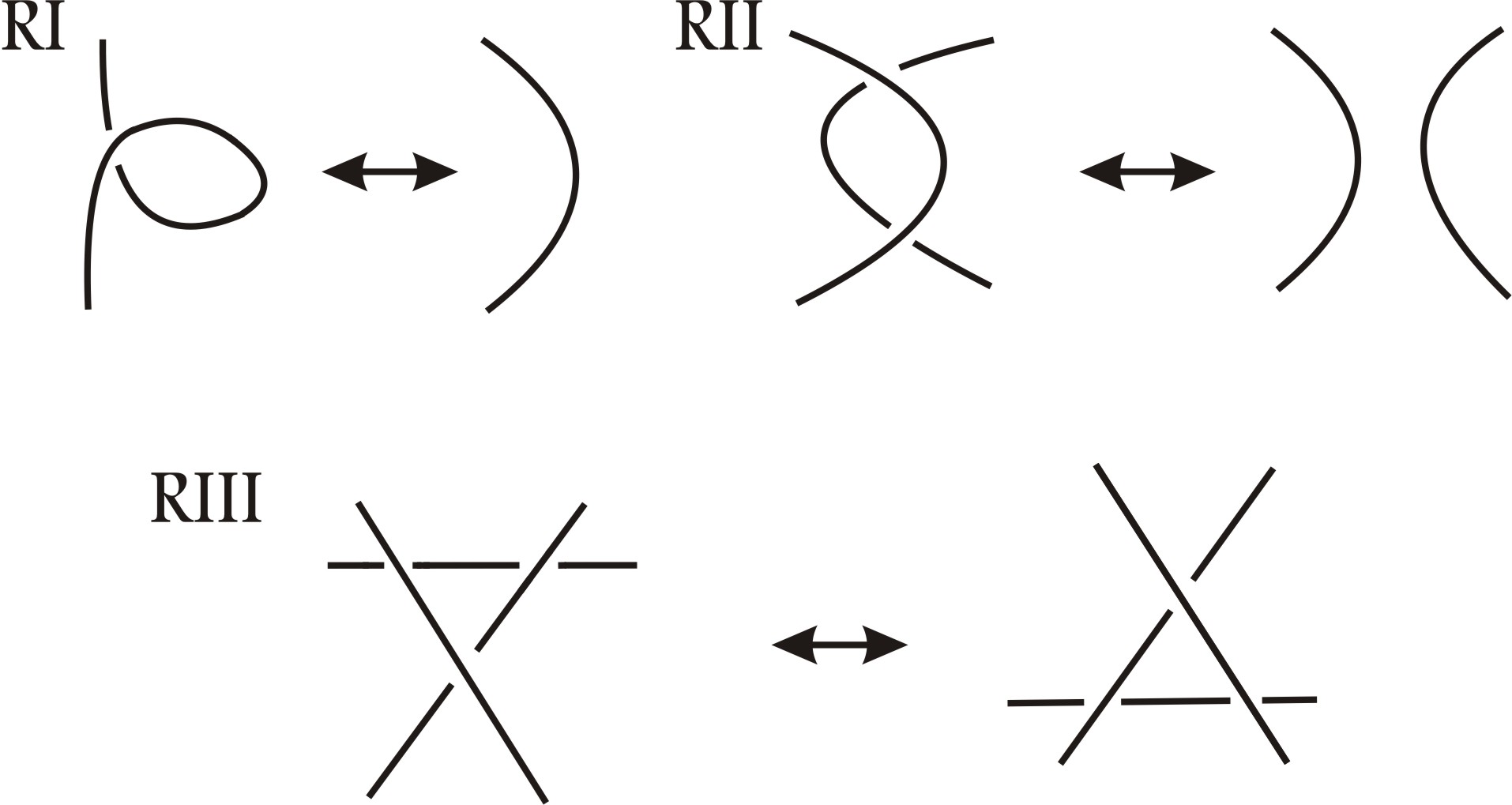}
\caption{Reidemeister moves.} \label{fig-rei}
\end{figure} 

 \begin{figure}[!ht] {\centering
 \subfigure[]{\includegraphics[scale=0.5]{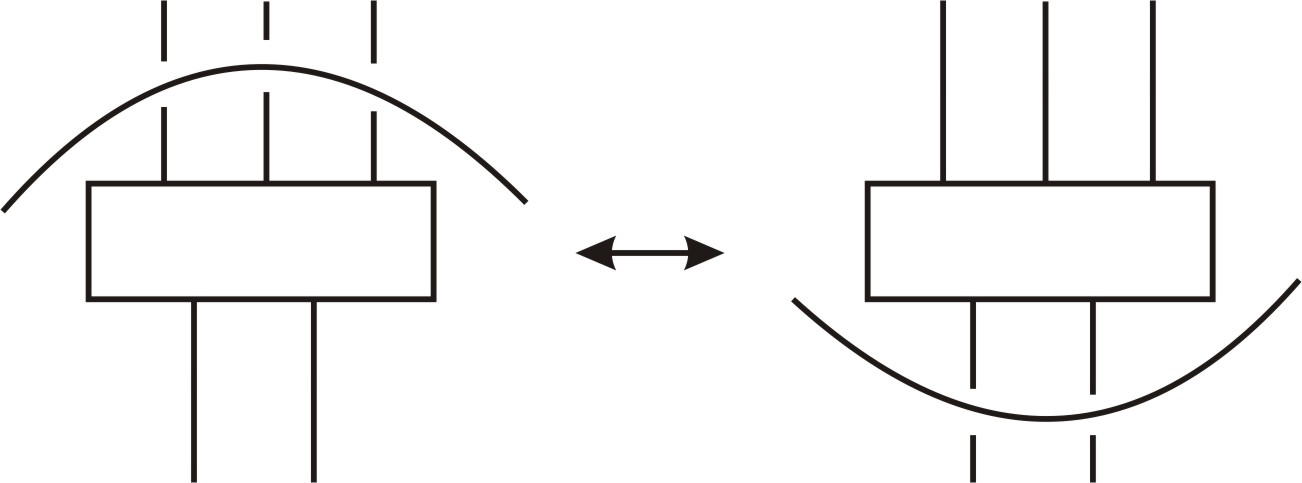} } \hspace{1cm}
\subfigure[] {\includegraphics[scale=0.5]{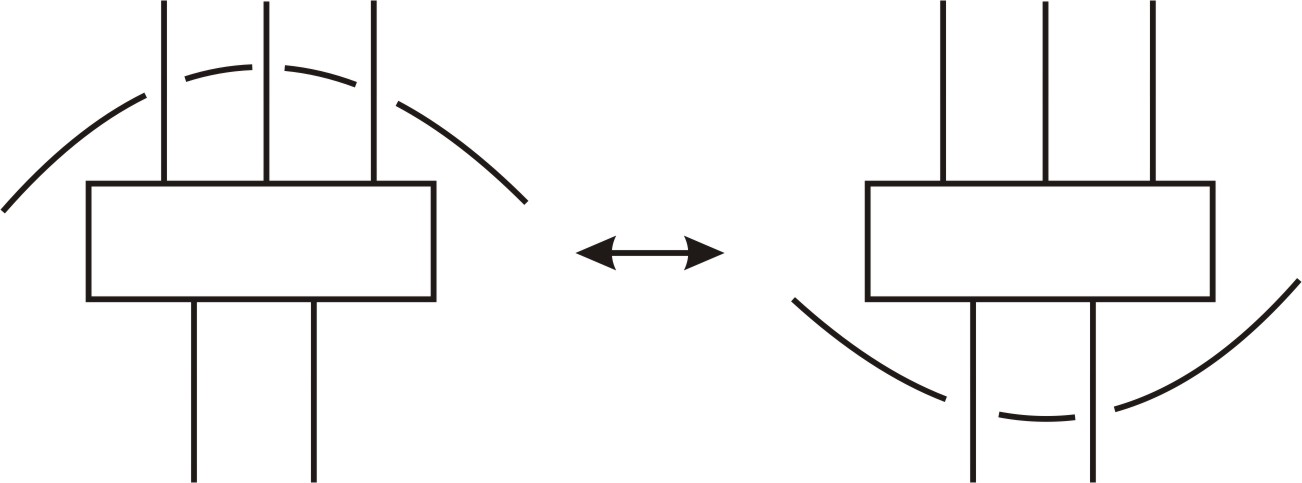}}
\caption{ Detour moves.}\label{detmove}}\end{figure}

\noindent A tangle is a proper  embedding of disjoint union of 2 arcs and possibly  loops into a three-ball $B^3$, with the endpoints of these arcs located at four fixed points on the boundary of $B^3$.
Tangles are represented by their planar diagrams and are considered up to isotopy of $B^3$ keeping the boundary fixed. We say that a tangle is connected if it is represented by a connected diagram.
Throughout this paper, unless otherwise stated, we are only considering connected tangles.
For a given tangle $T$, one can connect its endpoints with simple arcs to define the numerator closure $N(T)$ and the denominator  closure $D(T)$, as illustrated in Fig.~\ref{nt2}(a) and (b).
 
 \noindent A crossing $c$ in a link diagram $L$ is considered to be nugatory if it appears as shown Fig.~\ref{nt2}(c).
We refer to a crossing $c$ of a tangle $T$ as nugatory in $T$ if it is nugatory in both closures of $T$.  An example of a nugatory crossing $c$ is illustrated in Fig.~\ref{nt2}(d). 
\begin{figure}[!ht] {\centering
\subfigure[$N(T)$] {\hspace{.2cm}\includegraphics[scale=0.45]{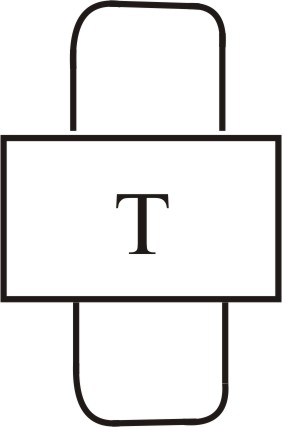}\hspace{.3cm}} \hspace{.3cm}
\subfigure[$D(T)$] {\includegraphics[scale=0.45]{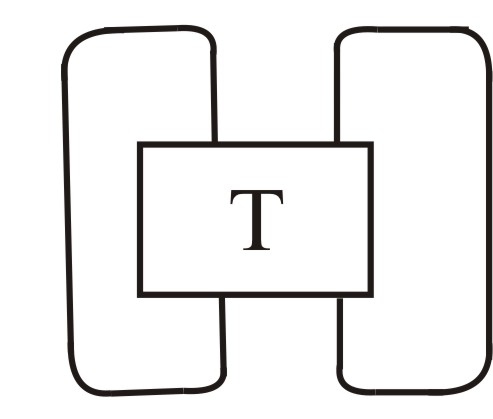}}\hspace{.5cm}
\subfigure[] {\includegraphics[scale=0.45]{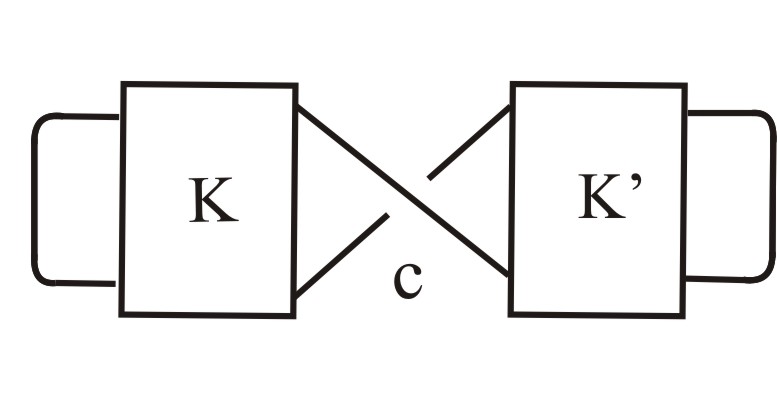}} \hspace{.5cm}
\subfigure[] {\includegraphics[scale=0.45]{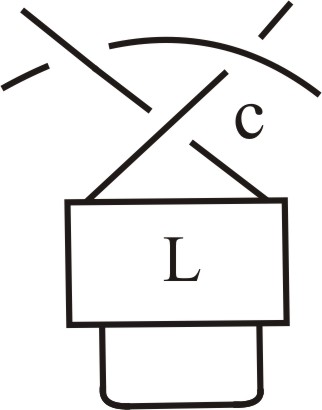}}
\caption{Numerator closure, Denominator closure, and a nugatory crossing in a link diagram and in a tangle. }\label{nt2}}\end{figure}

\noindent A tangle diagram $T$ is considered alternating if overpasses and underpasses appear alternately  as one travels along any strand or any loop. A tangle is considered  alternating if it admits an alternating tangle diagram. An alternating tangle diagram is said to be  reduced  if it contains no nugatory crossings. In this paper, we are considering alternating tangles as reduced alternating tangle diagrams. 
For a tangle $T$, we denote the tangles $T_+,~T_{-},~T^{+}$, $T^{-}$, $\chi_{T}$ and $\chi_{\overline{T}}$ as illustrated in Fig.~\ref{T+} and ~\ref{chiT}.
 \begin{figure}[!ht] {\centering
 \subfigure[$T_{+}$]{\hspace{.5cm}\includegraphics[scale=0.5]{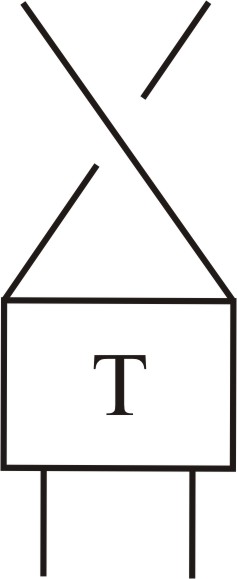} } \hspace{1cm}
\subfigure[$T_{-}$] {\includegraphics[scale=0.5]{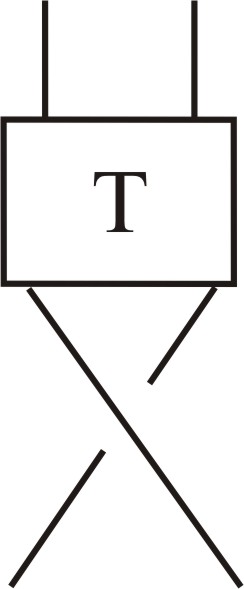}}\hspace{1cm}
\subfigure[$T^{+}$] {\includegraphics[scale=0.5]{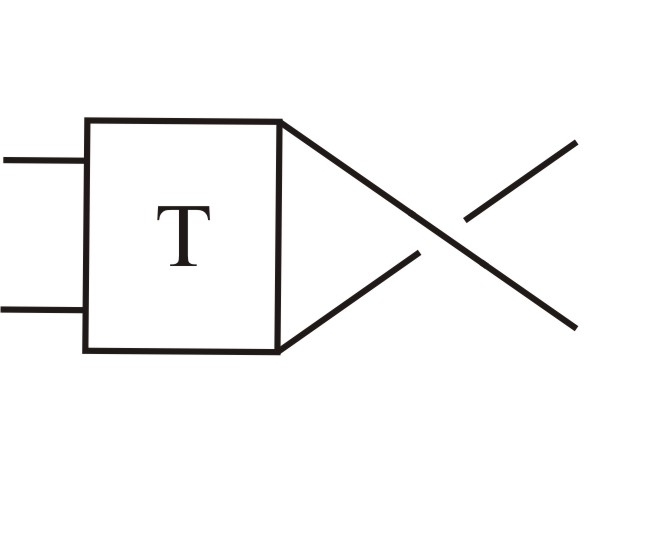}}\hspace{.5cm}
\subfigure[$T^{-}$] {\includegraphics[scale=0.5]{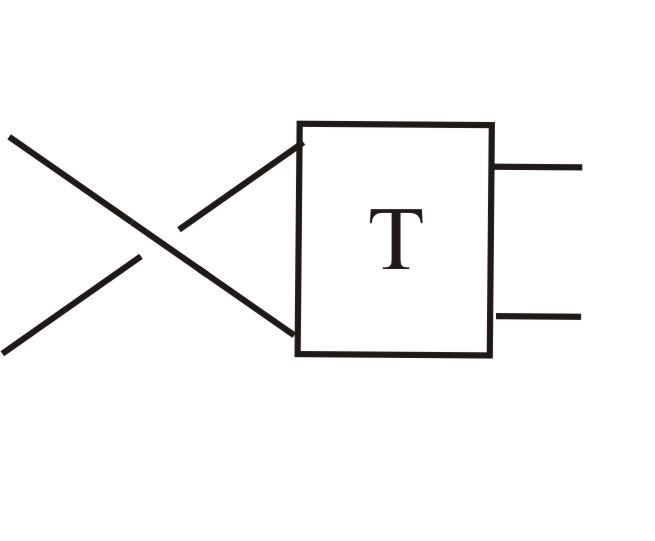}}
\caption{ Tangles $T_+,~T_{-},~T^{+}$ and $T^{-}$.}\label{T+}}\end{figure}
 
 \begin{figure}[!ht] {\centering
  \subfigure[$\chi_{T}$]{\includegraphics[scale=0.5]{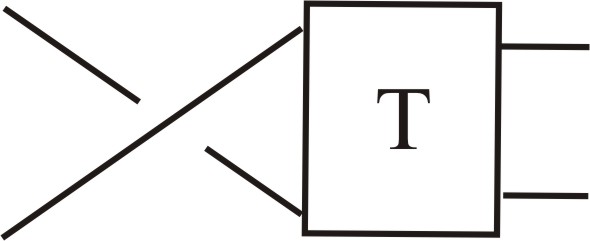}} \hspace{.5cm}
\subfigure[$\chi_{\overline{T}}$]{\includegraphics[scale=0.5]{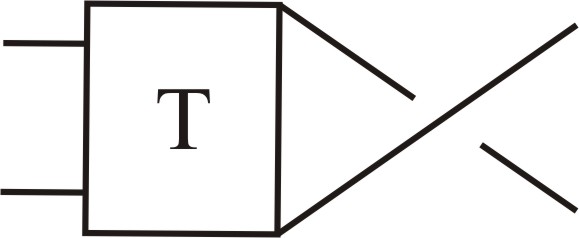}}
\caption{Tangles $\chi_{T}$ and $\chi_{\overline{T}}$.}\label{chiT}}
\end{figure}

\noindent A spanning tree of a connected planar graph $\mathcal{G}$, is a connected, acyclic sub-graph that includes all  vertices of $\mathcal{G}$. 
The determinant of a link $L$, denoted by $\det(L)$, is a well-known numerical invariant of links, which can be determined  by counting   the number of spanning trees in a checkerboard graph of a  link projection. Let us now  review this computation  method.
\begin{figure}[!ht] {\centering\includegraphics[scale=.8]{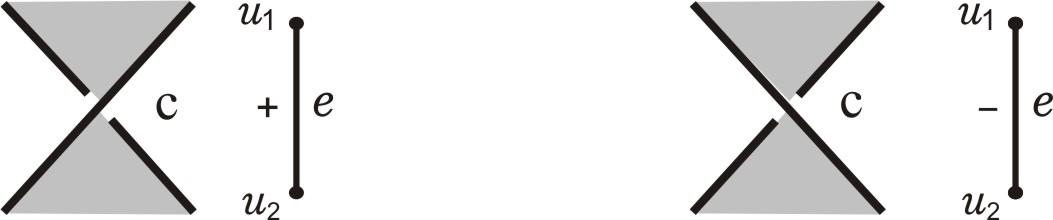}
\[\hspace{2cm}\text{(a)}\hspace{4cm}\text{(b)}\hspace{3cm}\]
\caption{}\label{2}}\end{figure}

\noindent A Tait graph, $\mathcal{G}(L)$, of a connected link diagram $L$ is a  graph which is constructed by first coloring the diagram in a checkerboard fashion. Then, a vertex is assigned to each shaded region, and an edge is assigned to each crossing, with a sign indicated as illustrated in Fig.~\ref{2}. The determinant of the  link $L$ is calculated  as follows:

\begin{lemma}\cite{champanerkar2009twisting} For any spanning tree $\mathcal{T}$ of $\mathcal{G}(L)$, let $\mathsf{v}(\mathcal{T})$ be the number of positive edges in $\mathcal{T}$. Let $s_{v}(L)=\sharp\{\text{spanning trees $\mathcal{T}$ of~} \mathcal{G}(L)\mid \mathsf{v}(\mathcal{T})=v \}$. Then
\[\det(L)=\Big|\displaystyle \sum_v (-1)^vs_v(L)\Big|.\]
\end{lemma}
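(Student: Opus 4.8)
The plan is to prove the determinant formula by relating spanning trees of the Tait graph $\mathcal{G}(L)$ to the Goeritz/Kauffman-state expansion of the determinant. The standard route is through the Kauffman bracket (or the reduced Burde--Goeritz setup), so first I would recall that $\det(L)$ can be computed as the absolute value of the Jones polynomial evaluated at $t=-1$, equivalently up to sign as $|\Delta_L(-1)|$, and that this in turn equals the value at a specific specialization of the Kauffman bracket of $L$. The key classical input is the well-known correspondence, going back to the work on the Tutte polynomial of the Tait graph, between states in the bracket expansion and spanning subgraphs; I would invoke the fact that after the usual substitution the bracket collapses to a signed sum over spanning trees, which is exactly the content one needs here.

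The main steps, in order, would be: (1) Fix the checkerboard coloring and the associated signed Tait graph $\mathcal{G}(L)$, so that each crossing contributes an edge with a sign determined by the local picture in Fig.~\ref{2}; this sets up the bijection between edges of $\mathcal{G}(L)$ and crossings of $L$. (2) Write down the spanning-tree expansion of the determinant: for a connected plane graph, the Tutte polynomial at $(-1,-1)$ computes $\pm\det(L)$, and expanding $T_{\mathcal{G}}(-1,-1)$ as a signed count of spanning trees, graded by the number of positive edges $\mathsf{v}(\mathcal{T})$, yields precisely $\sum_v (-1)^v s_v(L)$. (3) Identify this Tutte-polynomial value with $\det(L)$ up to an overall sign and absolute value, which is where the $|\cdot|$ in the statement comes from, since the sign depends on conventions (orientation, number of negative crossings, and the normalization of the coloring). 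The cleanest way to package (2) and (3) is to proceed by induction on the number of crossings, deleting and contracting a chosen edge of $\mathcal{G}(L)$ and matching this against the skein/smoothing recursion $\det(L) = \det(L_0) + \det(L_\infty)$ that underlies Definition~1.1; deletion and contraction of the Tait-graph edge correspond exactly to the two smoothings $L_0$ and $L_\infty$ at the associated crossing.

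I expect the main obstacle to be bookkeeping the signs correctly and ensuring that the grading by $\mathsf{v}(\mathcal{T})$, the number of positive edges, is compatible with the sign changes under deletion/contraction. Deleting an edge does not change the positive-edge count of the surviving spanning trees, but contracting a positive edge shifts the count by one, which is precisely what produces the $(-1)^v$ alternation and matches the additive skein relation only after one tracks how positive versus negative edges transform; handling the degenerate cases (loops and bridges in $\mathcal{G}(L)$, corresponding to nugatory crossings, and the base case of the single-crossing diagram) carefully is where the argument is most error-prone. Since the lemma is attributed to \cite{champanerkar2009twisting}, I would present the inductive deletion--contraction argument in outline and defer the full sign verification to that reference, emphasizing that the absolute value in the statement absorbs the global sign ambiguity so that the formula holds as stated regardless of the chosen checkerboard shading.
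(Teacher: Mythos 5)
The paper does not actually prove this lemma; it is quoted verbatim from \cite{champanerkar2009twisting}, so there is no in-paper argument to compare against and your sketch has to stand on its own. As written it has two genuine gaps. First, the specialization $T_{\mathcal{G}}(-1,-1)$ is the wrong algebraic engine: the Tutte polynomial of the underlying unsigned graph cannot see the edge signs at all, and the sanity check already fails for a reduced alternating diagram, where every edge of $\mathcal{G}(L)$ has the same sign, so $\sum_v(-1)^v s_v(L)=\pm\,\sharp\{\text{spanning trees}\}=\pm T_{\mathcal{G}}(1,1)$, whereas $T_{\mathcal{G}}(-1,-1)$ computes $\pm(-2)^{\dim(\text{bicycle space})}$ and is generally much smaller than $\det(L)$. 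The correct linear-algebraic input is the Goeritz matrix of the checkerboard coloring, i.e.\ the reduced Laplacian of the signed Tait graph, whose determinant equals $\pm\det(L)$; the weighted matrix--tree theorem then gives $\det(G_L)=\sum_{\mathcal{T}}\prod_{e\in\mathcal{T}}\mathrm{sgn}(e)=\pm\sum_v(-1)^v s_v(L)$ directly (the global sign is constant because $\mathsf{v}(\mathcal{T})$ plus the number of negative edges of $\mathcal{T}$ equals $|V|-1$ for every spanning tree), and the absolute value finishes the proof.

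Second, the inductive scaffolding you propose rests on the recursion $\det(L)=\det(L_0)+\det(L_\infty)$, but that identity is the \emph{defining condition} of a quasi-alternating crossing, not a general fact: for an arbitrary crossing one only has a signed relation for the Goeritz determinant (equivalently for $\Delta_L(-1)$ before taking absolute values), and the unsigned sum $\det(L_0)+\det(L_\infty)$ can strictly exceed $\det(L)$. A deletion--contraction induction can be made to work, but only if it is run on the signed quantity $\det(G_L)$ (or $\sum_v(-1)^v s_v(L)$ itself) rather than on $\det(L)$, with the absolute value taken once at the end; invoking the quasi-alternating determinant identity as the recursion makes the induction circular for the links this paper cares about and false for the rest. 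Deferring ``the full sign verification'' to the reference is deferring essentially all of the content.
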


\begin{remark}[Lemma 2.3, \cite{chbili2020quasiI}] \label{rem:ab} Let $c$ be a quasi-alternating crossing of a link diagram $L$ and $e$ be the corresponding edge in $\mathcal{G}(L)$. If $e$ is a positive edge in $\mathcal{G}(L)$, then
\[\Big(\displaystyle \sum_v (-1)^vs_{v-1}(L_0)\Big).\Big(\displaystyle \sum_v (-1)^vs_v(L_\infty)\Big)>0.\]
\noindent For our convenience, in this paper we use the following notation
\[\mathfrak{a}=\displaystyle \sum_v (-1)^vs_{v-1}(L_0),\quad \text{and},\quad \mathfrak{b}=\displaystyle \sum_v (-1)^vs_v(L_\infty).\]
It is evident that $|\mathfrak{a}|=\det(L_0)$ and $|\mathfrak{b}|=\det(L_{\infty})$.
\end{remark}

\begin{definition}\cite{chbili2020quasiI} A sub-graph $\mathcal{H}$ of a connected planar graph $\mathcal{G}$  is called an  almost spanning tree with respect to vertices $u_1$ and $u_2$, if there is no path between $u_1$ and $u_2$, and by adding one edge it becomes a spanning tree of $\mathcal{G}$.
\end{definition}

\begin{definition}  Given a tangle $T$, we define the Tait graph of $T$ as  the Tait graph of its  numerator closure $N(T)$. This graph is denoted by $\mathcal{G}(T)$.
\end{definition}
\begin{definition}\cite{chbili2020quasiI} An alternating tangle $T$ is said to be positive (respectively,  negative) if the edges of the  Tait graph $\mathcal{G}(T)$, with induced checkerboard coloring as shown in Fig.~\ref{2}, are positive (respectively,  negative).
\end{definition}

\noindent Let $e$ be the edge corresponding to a crossing $c$ with checkerboard coloring as illustrated in Fig.~\ref{2}.  Then an alternating tangle $T$  is a tangle of same type as $c$ or extends $c$, if $T$ is positive whenever $e$ is positive and negative whenever  $e$ is negative. If $T$ fails to adhere to this rule,  we refer to 
$T$ as being of opposite type with respect to $c$. The tangle shown in Fig.~\ref{positivetangle} is a positive tangle and of same type with respect to crossing $c$ depicted  in Fig.~\ref{positivetangle}. However, the tangle shown in Fig.~\ref{positivetangle} is of opposite type with respect to crossing $c$ as shown in Fig.~\ref{2}(b).
 
 \begin{figure}[!ht] {\centering
\includegraphics[scale=.7]{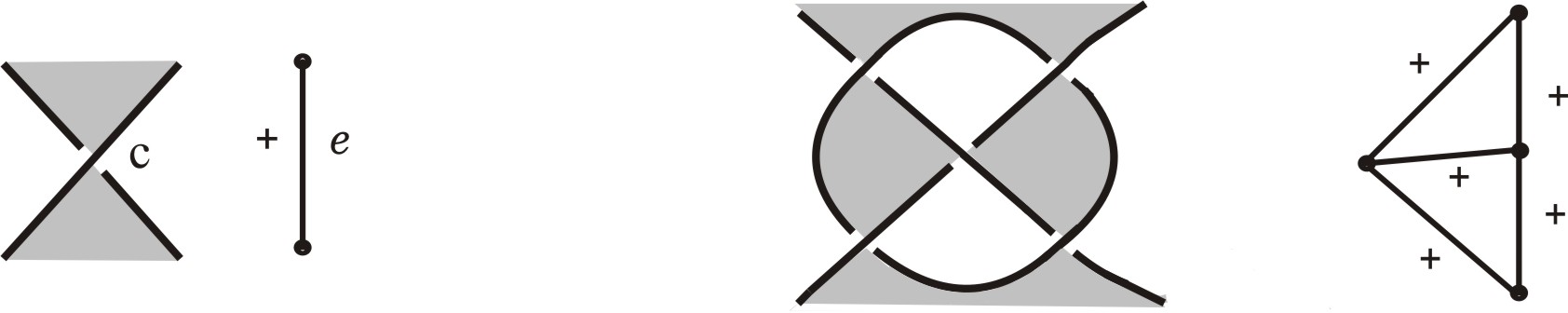}
\caption{A positive tangle. }\label{positivetangle}}\end{figure}

\begin{theorem}\cite{chbili2020quasiI}\label{Thm:pre:m}
 Let $L$ be a quasi-alternating link diagram at a crossing $c$.
Let $L'$ be the  link obtained from  $L$ by replacing $c$ with a reduced alternating tangle $T$ that extends $c$. Then $L'$ is  quasi-alternating at every crossing of $T$.
 \end{theorem}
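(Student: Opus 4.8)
The plan is to prove the statement by strong induction on the number $n$ of crossings of the reduced alternating tangle $T$. If $n=1$, then $T$ is a single crossing of the same type as $c$, so $L'=L$ and the conclusion is exactly the hypothesis. For the inductive step I fix an arbitrary crossing $c'$ of $T$ and verify the two conditions in the definition of a quasi-alternating crossing for $L'$ at $c'$. Writing $T_0$ and $T_\infty$ for the two smoothings of $T$ at $c'$, the two smoothings of $L'$ at $c'$ are precisely the links $L'_0$ and $L'_\infty$ obtained from $L$ by replacing $c$ with $T_0$ and with $T_\infty$. Since $T$ is a reduced alternating diagram, each of $T_0,T_\infty$ is again alternating and, after deleting any nugatory crossings produced by the smoothing, is a reduced alternating tangle of the same type as $c$ with strictly fewer crossings; in the degenerate cases where a smoothing becomes crossingless it is the $0$- or $\infty$-tangle, and the corresponding replacement is simply $L_0$ or $L_\infty$, which are quasi-alternating by hypothesis. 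In all cases the induction hypothesis gives that $L'_0$ and $L'_\infty$ are quasi-alternating, which settles condition (1).

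The heart of the matter is condition (2), the determinant identity $\det(L')=\det(L'_0)+\det(L'_\infty)$. The plan is to compute all three determinants through the signed spanning-tree formula of Lemma 2.1 applied to $\mathcal{G}(L')$, which is obtained from $\mathcal{G}(L)$ by replacing the edge $e$ dual to $c$ with the subgraph $\mathcal{G}(T)$ glued along the two endpoints $u_1,u_2$ of $e$. Every spanning tree of $\mathcal{G}(L')$ restricts on $\mathcal{G}(T)$ either to a genuine spanning tree of $\mathcal{G}(T)$ or to an almost spanning tree with respect to $u_1,u_2$ in the sense of Definition 2.1; in the first case the complementary edges must span the contraction $\mathcal{G}(L)/e$, and in the second case they must span the deletion $\mathcal{G}(L)\setminus e$, these two graphs being $\mathcal{G}(L_0)$ and $\mathcal{G}(L_\infty)$ up to interchanging $0$ and $\infty$ according to the type of $e$. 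Carrying the signs $(-1)^{\mathsf{v}}$ through this decomposition should yield a bilinear identity of the shape $\sum_v(-1)^v s_v(L')=\pm\big(\mathfrak{a}\,\mathcal{N}(T)+\mathfrak{b}\,\mathcal{D}(T)\big)$, where $\mathcal{N}(T)$ and $\mathcal{D}(T)$ are the signed counts of spanning trees and of almost spanning trees of $\mathcal{G}(T)$, with $|\mathcal{N}(T)|=\det(N(T))$ and $|\mathcal{D}(T)|=\det(D(T))$, and where $\mathfrak{a},\mathfrak{b}$ are as in Remark~\ref{rem:ab}.

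Granting this formula, the additivity reduces to a statement about the tangle alone. Because $T$ is a reduced alternating tangle, its closures $N(T)$ and $D(T)$ are alternating link diagrams, hence quasi-alternating, and moreover quasi-alternating at every non-nugatory crossing; applied at $c'$ this gives the signed additivities $\mathcal{N}(T)=\mathcal{N}(T_0)+\mathcal{N}(T_\infty)$ and $\mathcal{D}(T)=\mathcal{D}(T_0)+\mathcal{D}(T_\infty)$. Substituting into the bilinear identity produces $\sum_v(-1)^v s_v(L')=\pm\big(\mathfrak{a}\,\mathcal{N}(T_0)+\mathfrak{b}\,\mathcal{D}(T_0)\big)\pm\big(\mathfrak{a}\,\mathcal{N}(T_\infty)+\mathfrak{b}\,\mathcal{D}(T_\infty)\big)$, the two bracketed quantities being exactly the signed tree-counts of $L'_0$ and $L'_\infty$. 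Taking absolute values, the desired equality $\det(L')=\det(L'_0)+\det(L'_\infty)$ follows the moment one knows that the two brackets share a common sign and that, inside each bracket, the summands $\mathfrak{a}\,\mathcal{N}$ and $\mathfrak{b}\,\mathcal{D}$ reinforce rather than cancel.

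This sign control is where I expect the real difficulty, and it is exactly the point at which the hypothesis that $T$ extends $c$ becomes indispensable. Since $T$ is alternating and of the same type as $c$, every edge of $\mathcal{G}(T)$ carries the same sign as $e$ (Definition 2.3), so all spanning trees of $\mathcal{G}(T)$ share a common value of $\mathsf{v}$ and the signed count $\mathcal{N}(T)$ — and likewise $\mathcal{D}(T)$ and all of their smoothed variants — has a definite, predictable sign; this rules out the internal cancellation. The remaining compatibility, namely that $\mathfrak{a}\,\mathcal{N}$ and $\mathfrak{b}\,\mathcal{D}$ pull in the same direction, should follow from the inequality $\mathfrak{a}\,\mathfrak{b}>0$ of Remark~\ref{rem:ab} together with the parity relation between $\mathsf{v}$ on $\mathcal{G}(T)$ and the index shift $v\mapsto v-1$ built into $\mathfrak{a}$. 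I would isolate this as the key computational lemma; for a tangle of opposite type the signs are instead forced to oppose, the absolute values no longer add, and this is precisely the mechanism underlying the failures recorded in the introduction. Finally, since $L'_0$ and $L'_\infty$ are quasi-alternating their determinants are at least $1$, so the positivity requirements in condition (2) hold, and the induction closes.
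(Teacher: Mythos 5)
The paper does not actually prove Theorem~\ref{Thm:pre:m} --- it imports it from \cite{chbili2020quasiI} --- but your outline reproduces essentially the argument of that source and the machinery this paper sets up around it: induction on the crossing number of $T$, the decomposition of spanning trees of $\mathcal{G}(L')$ into spanning trees versus almost spanning trees of $\mathcal{G}(T)$ glued to trees of $\mathcal{G}(L)/e$ and $\mathcal{G}(L)\setminus e$, and the sign control coming from Remark~\ref{rem:ab} together with the fact that every edge of $\mathcal{G}(T)$ carries the sign of $e$, so that signed tree counts of $\mathcal{G}(T)$ have no internal cancellation. Your plan is correct and takes the same route; the points you leave implicit (connectivity and reducedness of the smoothed tangles $T_0$, $T_\infty$, and the degenerate crossingless cases) are routine and are handled in the same way in the cited proof.
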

%%%%%%%%%%%%%%%%%%%%%%%%%%%%%%%%%%%%%%%

\section{The Main Theorem}\label{main theorem}
\noindent Before outlining the main result in this section, we will first introduce some key definitions.

\begin{definition} A crossing $c$ in a link diagram $L$ is said to satisfies the determinant property, if the following condition holds at crossing $c$
\[ \det(L_0),~~ \det(L_\infty) \geq 1 \quad \text{and}\quad  \det(L) = \det(L_0) + \det(L_\infty),\]
where $L_0$ and $L_{\infty}$ are the diagrams as shown in Fig.~\ref{1}.
\end{definition}
\begin{definition} A quasi-alternating crossing $c$ in a link diagram $L$ is considered to have property (I) if, after switching crossing $c$, the resulting diagram is a diagram of a quasi-alternating link. An example of such a crossing $c$  with property (I) is depicted in Fig.~\ref{exm:In1}(a).
\end{definition}

\begin{definition} A quasi-alternating crossing $c$ in a  link diagram $L$ is said to have property (II), if the following conditions hold at crossing $c$
\begin{enumerate}
\item $\det(L_0)<\det(L_{\infty})$, where $L_0$ and $L_{\infty}$ are the diagrams as shown in Fig.~\ref{1},
\item property (I) is satisfied.
\end{enumerate}
\end{definition}

\begin{definition} A quasi-alternating crossing $c$ in a  link diagram $L$ is said to have property (III), if the following conditions hold at crossing $c$
\begin{enumerate}
\item $\det(L_{\infty})<\det(L_{0})$, where $L_0$ and $L_{\infty}$ are the diagrams as shown in Fig.~\ref{1},
\item property (I) is satisfied.
\end{enumerate}
\end{definition}
\noindent This section is divided into subsections~\ref{subsection:Alt} and \ref{subsection:NonAlt}.
\subsection{Quasi-alternating links via alternating tangles of opposite type}
\label{subsection:Alt}
\begin{theorem}\label{Thm:alt_tan_op} 
Let $L$ be a quasi-alternating link diagram with a quasi-alternating crossing $c$, and let $T$ be an alternating tangle of opposite type with respect to $c$. Then, a link $L'$ obtained from $L$ by replacing crossing $c$ with a tangle $T'$  is quasi-alternating, if either of the following conditions holds:
\begin{enumerate}
\item $T'\in \{T_+, T_-\}$ and  property (II) holds at $c$, or
\item $T'\in \{T^+, T^-\}$ and  property (III) holds at $c$ ,
\end{enumerate}
where $T_+,~T_-,~T^+$ and $T^-$ are the tangles depicted in Fig.~\ref{T+}.\\
 More precisely, every crossing of the tangle $T$ is  quasi-alternating in $L'$.
\end{theorem}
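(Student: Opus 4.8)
The plan is to argue by induction on the number of crossings of the alternating tangle $T$, using the recursive definition of $\mathcal{Q}$ together with the signed spanning-tree bookkeeping of the determinant formula and of Remark~\ref{rem:ab}. First I would normalize: after replacing $L$ by its mirror image if necessary, I assume that the edge $e$ of $\mathcal{G}(L)$ corresponding to $c$ is positive, so that Remark~\ref{rem:ab} applies and I may write $\mathfrak{a}=\sum_v(-1)^v s_{v-1}(L_0)$ and $\mathfrak{b}=\sum_v(-1)^v s_v(L_\infty)$ with $\mathfrak{a}\mathfrak{b}>0$, $|\mathfrak{a}|=\det(L_0)$ and $|\mathfrak{b}|=\det(L_\infty)$. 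I would then prove case (1) in full, and deduce case (2) from the $90^\circ$-rotation symmetry of the diagram, which interchanges $L_0\leftrightarrow L_\infty$, the families $\{T_+,T_-\}\leftrightarrow\{T^+,T^-\}$ of Fig.~\ref{T+}, and properties (II)$\leftrightarrow$(III).

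For the base of the induction I take $T$ to be a single crossing of opposite type, so that $T'\in\{T_+,T_-\}$ is a two-crossing twist tangle. I would single out the crossing $c_1$ of $T'$ farthest from $T$ as the candidate quasi-alternating crossing of $L'$ and examine its two smoothings. One smoothing collapses $T'$ back to the opposite single crossing, hence produces the diagram $\widetilde{L}$ obtained from $L$ by switching $c$; this lies in $\mathcal{Q}$ precisely by property (I). The other smoothing inserts the $0$- or $\infty$-tangle in place of $c$ (the residual crossing becoming nugatory and removable by a Reidemeister~I move), and so yields $L_0$ or $L_\infty$, which is quasi-alternating because $L$ is quasi-alternating at $c$. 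Thus both smoothings of $L'$ at $c_1$ belong to $\mathcal{Q}$, and only the determinant condition at $c_1$ remains to be verified.

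The determinant condition is the heart of the matter. I would compute $\sum_v(-1)^v s_v(L')$ by splitting the spanning trees of $\mathcal{G}(L')$ according to their restriction to the two new edges produced by $T'$, applying deletion--contraction at these edges together with the almost-spanning-tree decomposition of Section~2. This expresses the signed tree count of $L'$ as a bilinear combination of $\mathfrak{a},\mathfrak{b}$ with the signed tree counts of the inserted tangle. Because $T$ is of \emph{opposite} type, the two new edges carry the sign pattern opposite to the same-type situation of Theorem~\ref{Thm:pre:m}, so the two contributions, which added there, now threaten to cancel. This is exactly where property (II) is used: the strict inequality $\det(L_0)<\det(L_\infty)$ together with $\mathfrak{a}\mathfrak{b}>0$ forces the surviving combination to be a genuine sum rather than a difference, yielding $\det(L')=\det(L'_0)+\det(L'_\infty)$ with both summands $\geq 1$. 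I expect this sign analysis to be the main obstacle: ruling out the cancellation that the opposite type naively produces, and securing the positivity bounds, is precisely the work that the determinant hypothesis (II) (and symmetrically (III)) is designed to do.

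For the inductive step, with $T$ a reduced alternating tangle of opposite type on $n>1$ crossings, I would remove an outermost crossing $c_2$ of $T$ to obtain a reduced alternating tangle $\widehat{T}$ of opposite type on $n-1$ crossings, noting that properties (II)/(III) concern only $c$ in $L$ and are therefore untouched. Smoothing $L'$ at $c_2$ produces, on one side, the link obtained from $L$ by replacing $c$ with $\widehat{T}'\in\{\widehat{T}_+,\widehat{T}_-\}$, which is quasi-alternating at every crossing of $\widehat{T}$ by the induction hypothesis, and on the other side a diagram with strictly fewer crossings of $T$ that is quasi-alternating by the induction hypothesis or by Theorem~\ref{Thm:pre:m}. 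Re-running the signed spanning-tree computation of the previous paragraph at $c_2$ then supplies the determinant condition there, so $L'$ is quasi-alternating at $c_2$; carrying this out over all crossings of $T$ gives the final assertion that every crossing of $T$ is quasi-alternating in $L'$. The recurring difficulty at each stage is the same opposite-type sign bookkeeping flagged above, which must be re-established using the relevant determinant inequality.
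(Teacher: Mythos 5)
Your base case is sound and coincides with the $n=2$ case of the paper's Lemma~\ref{lemma:opptwist}: the two smoothings of $L'$ at the outer twist crossing are indeed the crossing-switched diagram (handled by property (I)) and $L_\infty$ (resp.\ $L_0$), and the signed tree count gives $\det(L^{\overline{2}})=|-\mathfrak{a}+2\mathfrak{b}|$, so the determinant condition at that crossing is exactly $|\mathfrak{a}|<|\mathfrak{b}|$, i.e.\ condition (II). The gap is in your inductive step on the number of crossings of $T$. For a general reduced alternating tangle there is no ``outermost crossing'' whose removal yields a reduced, connected, alternating tangle of opposite type of the same shape; one of the two smoothings of $T$ at your $c_2$ typically produces a tangle that is disconnected, non-reduced, or simply not of the form $\widehat{T}_\pm$, so it is covered neither by your induction hypothesis nor by Theorem~\ref{Thm:pre:m} (which requires the \emph{same} type). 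Worse, verifying the determinant condition at an interior crossing $c_2$ of $T$ requires controlling the signs of the signed spanning-tree counts of $\mathcal{G}(T_\pm)$, a graph whose edges now carry \emph{both} signs; this is precisely the hard combinatorial content of the same-type theorem of \cite{chbili2020quasiI}, and your sketch does not supply it. You have in effect reduced the theorem to reproving that theorem in a mixed-sign setting, which is not a reduction.

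The missing idea is to decouple the two difficulties rather than interleave them in one induction. The paper first proves (Lemma~\ref{lemma:opptwist}) that under property (II) the link $L^{\overline{2}}$, obtained by replacing $c$ with two opposite vertical half twists, is quasi-alternating \emph{at each of the two new crossings} $c'_1,c'_2$ --- this is exactly your base-case computation and is the only place the inequality $\det(L_0)<\det(L_\infty)$ is needed. It then observes that $T$, being of opposite type with respect to $c$, is of the \emph{same} type with respect to $c'_1$ and $c'_2$, so Theorem~\ref{Thm:pre:m} applies as a black box: replacing $c'_i$ by $T$ yields a quasi-alternating link that is quasi-alternating at every crossing of $T$, and this link is precisely $L'$ with $T'=T_+$ or $T_-$. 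All of the delicate alternating-tangle combinatorics is thereby absorbed into the already-proved same-type theorem instead of being redone crossing by crossing. Your $90^\circ$-rotation reduction of case (2) to case (1) is a reasonable shortcut for the horizontal/vertical symmetry, which the paper instead writes out in parallel.
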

\noindent To prove Theorem~\ref{Thm:alt_tan_op}, we first need to establish the following lemma.
\begin{lemma}\label{lemma:opptwist} 
Let $L$ be a quasi-alternating link diagram with a quasi-alternating crossing $c$ such that property (I) holds at crossing $c$.
Let $L^{\overline{n}}$ and $L^{-\overline{n}}$ be  the links obtained from $L$ by replacing  crossing $c$ with $n > 1$ vertical and horizontal half twists of opposite type, respectively, see Fig.~\ref{twop}. Then 
\begin{enumerate}
\item $L^{\overline{n}}$ is quasi-alternating, if $\det(L_{0})< \det(L_{\infty})$, and
\item $L^{-\overline{n}}$ is quasi-alternating, if $\det(L_{0})>\det(L_{\infty})$, 
\end{enumerate} where $L_{\infty}$ and $L_{0}$ are the diagrams as shown in Fig.~\ref{1}. Furthermore, every new crossing of $L^{\overline{n}}$ and $L^{-\overline{n}}$ is quasi-alternating.
\end{lemma}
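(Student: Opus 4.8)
The plan is to argue by induction on $n$, using the spanning-tree formula for the determinant from \cite{champanerkar2009twisting} together with the sign information packaged in $\mathfrak{a}$ and $\mathfrak{b}$ in Remark~\ref{rem:ab}. I will carry out part (1) in detail; part (2) is dual, obtained by interchanging the roles of $L_0$ and $L_\infty$. Throughout I fix the Tait graph $\mathcal{G}(L)$, let $e$ be the edge corresponding to $c$, and assume without loss of generality that $e$ is positive, since taking the opposite sign merely flips every sign and leaves each $|\cdot|$ unchanged. Writing $P(K)=\sum_v(-1)^v s_v(K)$ so that $\det(K)=|P(K)|$, deletion--contraction gives $P(L)=\mathfrak{a}+\mathfrak{b}$, where contraction of $e$ corresponds to $L_0$ and deletion to $L_\infty$; in particular a short index shift yields $\sum_v(-1)^v s_v(L_0)=-\mathfrak{a}$ and $\sum_v(-1)^v s_v(L_\infty)=\mathfrak{b}$.

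The key computation is the effect of inserting $n$ vertical half twists of opposite type. Since the twists are of opposite type, the new crossings are negative, and inserting $n$ of them replaces the edge $e$ by a path of $n$ negative edges through $n-1$ new degree-two vertices. I then classify the spanning trees of the resulting graph according to whether all $n$ path edges are used (which forces $u_1$ and $u_2$ to be joined through the path, the remaining edges forming a spanning tree of $\mathcal{G}(L_0)$) or exactly one path edge is omitted ($n$ choices, the remaining edges forming a spanning tree of $\mathcal{G}(L_\infty)$). Because negative edges do not affect the parity count $v$, these two families contribute $-\mathfrak{a}$ and $n\,\mathfrak{b}$ respectively, so that
\[ P(L^{\overline{n}}) = -\mathfrak{a} + n\,\mathfrak{b}. \]
As $\mathfrak{a}\mathfrak{b}>0$ I may take both positive, whence $|\mathfrak{a}|=\det(L_0)$ and $|\mathfrak{b}|=\det(L_\infty)$; the hypothesis $\det(L_0)<\det(L_\infty)$ then makes $n\mathfrak{b}-\mathfrak{a}>0$ for every $n\geq 1$, giving $\det(L^{\overline{n}})=n\det(L_\infty)-\det(L_0)\geq 1$ and the crucial additive identity
\[ \det(L^{\overline{n}}) = \det(L^{\overline{n-1}}) + \det(L_\infty). \]

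With this in hand the induction runs smoothly. For the base case $n=1$, the link $L^{\overline{1}}$ is $L$ with $c$ switched, which is quasi-alternating by property (I). For the inductive step, at any one of the $n$ twist crossings of $L^{\overline{n}}$ the two smoothings are, after removing the remaining twists by Reidemeister~I moves, exactly $L^{\overline{n-1}}$ and $L_\infty$: the former is quasi-alternating by the inductive hypothesis and the latter because $c$ is a quasi-alternating crossing of $L$. Combined with the displayed determinant identity and the bound $\det(L^{\overline{n}})\geq 1$, the defining conditions for a quasi-alternating crossing are satisfied, so this crossing is quasi-alternating in $L^{\overline{n}}$. Since the argument applies verbatim at each of the $n$ twist crossings, every new crossing is quasi-alternating. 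For part (2) the horizontal half twists correspond instead to $n$ parallel negative edges, yielding $P(L^{-\overline{n}})=\mathfrak{b}-n\mathfrak{a}$, so the condition $\det(L_0)>\det(L_\infty)$ is exactly what forces $\det(L^{-\overline{n}})=\det(L^{-\overline{(n-1)}})+\det(L_0)$.

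I expect the main obstacle to be the sign bookkeeping in the spanning-tree computation of $P(L^{\overline{n}})$: verifying that the two tree-families organize as $-\mathfrak{a}+n\mathfrak{b}$, and checking that the inequality $\det(L_0)<\det(L_\infty)$ (respectively $\det(L_0)>\det(L_\infty)$) is precisely what prevents the two determinants from cancelling, so that they add as required by the definition of a quasi-alternating crossing. By contrast, the geometric identification of the two smoothings of a twist crossing with $L^{\overline{n-1}}$ and $L_\infty$ via Reidemeister~I moves is routine, as is the reduction to the case of positive $e$.
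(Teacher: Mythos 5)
Your proposal is correct and follows essentially the same route as the paper: the same spanning-tree computation giving $P(L^{\overline{n}})=-\mathfrak{a}+n\mathfrak{b}$ (respectively $P(L^{-\overline{n}})=\mathfrak{b}-n\mathfrak{a}$), the same identification of the two smoothings at a twist crossing with $L^{\overline{n-1}}$ and $L_{\infty}$ (respectively $L^{-\overline{(n-1)}}$ and $L_{0}$), and the same induction on $n$ with property (I) supplying the base case. The only cosmetic difference is that you state the additive determinant identity directly, where the paper phrases it as the sign condition $(n-1)|\mathfrak{b}|>|\mathfrak{a}|$.
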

\begin{proof}
Let $c$ be a quasi-alternating crossing of a quasi-alternating link diagram $L$ and $e$ be its corresponding edge in $\mathcal{G}(L)$.
Let $L^{\overline{n}}$ and $L^{-\overline{n}}$ be the links obtained from $L$ by replacing  crossing $c$ with $n$ vertical and horizontal half twists of opposite type, respectively, as illustrated in Fig.~\ref{twop}. 
 \begin{figure}[!ht] {\centering
 \subfigure[$L^{\overline{n}}$]{\includegraphics[scale=0.5]{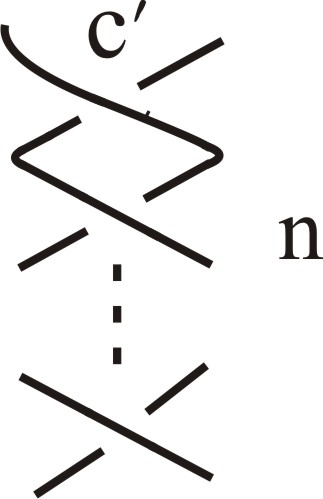} } \hspace{1cm}
\subfigure[$L$] {\includegraphics[scale=0.5]{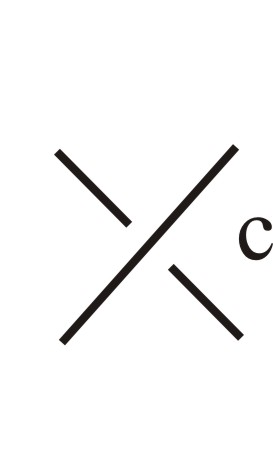}}\hspace{1cm}
\subfigure[$L^{-\overline{n}}$] {\includegraphics[scale=0.5]{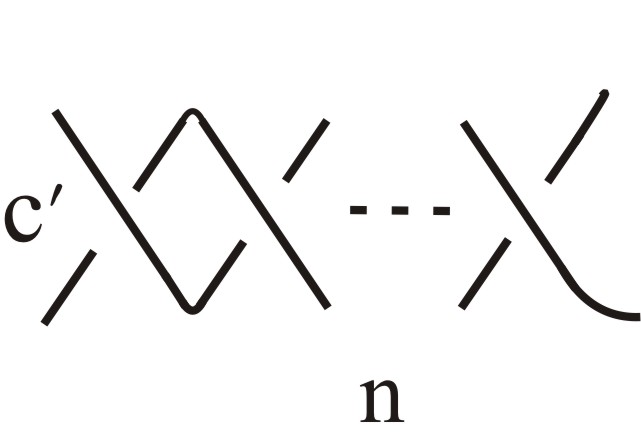}}
\caption{ Links $L$, $L^{\overline{n}}$ and $L^{-\overline{n}}$.}\label{twop}}\end{figure}

\noindent Choose the checkerboard coloring of $L$ such that the edge $e(=u_1u_2)$ in $\mathcal{G}(L)$ is positive. Let $\mathcal{G}(L^{\overline{n}})$ and $\mathcal{G}(L^{-\overline{n}})$ be the Tait graphs of $L^{\overline{n}}$ and $L^{-\overline{n}}$, respectively, with the induced checkerboard coloring. First, we consider the case of vertical half twists.

\noindent Case (1) Let $c'$ be any crossing that belongs to the new $n$ vertical half twists in  $L^{\overline{n}}$. Without lose of generality, we take $c'$ as shown in Fig.~\ref{twop}(a), and $e'$ be its corresponding edge in $\mathcal{G}(L^{\overline{n}})$.  Then the total number of spanning trees in $L^{\overline{n}}$ with $v$ number of  positive edges are given as
%%%%%%%%%%

\[s_{v}(L^{\overline{n}})=\sharp \left \{\text{ spanning trees ~} \mathcal{T} \text{~of~} L ~| ~e\in \mathcal{T}, \text{~and~} \mathsf{v}(\mathcal{T})=v+1 \right \} + \]
 \[\hspace{1cm} n~~ \sharp \left \{ \text{ spanning trees ~} \mathcal{T} \text{~of~} L ~| ~e\notin \mathcal{T}, \text{~and~} \mathsf{v}(\mathcal{T})=v \right \} . \]

 \[ s_{v}(L^{\overline{n}})=s_{v}(L_0)+ns_{v}(L_{\infty})\]

 \begin{equation}\label{eq2:twop}
\begin{split}
 \det(L^{\overline{n}})&=\Big| \displaystyle \sum_{v}(-1)^{v}s_{v}(L^{\overline{n}})\Big|=\Big|\displaystyle \sum_{v}(-1)^{v} \left\{s_{v}(L_0)+n s_{v}(L_{\infty})\right\} \Big| \\
&=\Big|\displaystyle \sum_{v}(-1)^{v} s_{v}(L_0)+n\displaystyle \sum_{v}(-1)^{v}s_{v}(L_{\infty}) \Big| = \Big|-\mathfrak{a}+n\mathfrak{b} \Big|
\end{split}
\end{equation}
Since $c$ is a quasi-alternating crossing of $L$, we have $\mathfrak{a}\mathfrak{b}>0$ by Remark~\ref{rem:ab}.
It is easy to observe that by performing smoothings at crossing $c'$ in $L^{\overline{n}}$,  $L^{\overline{n}}_{0}$ is equivalent to the link $L_{\infty}$, while $L^{\overline{n}}_{\infty}$ corresponds to the link $L^{\overline{n-1}}$.

\noindent Therefore, we have $\det(L^{\overline{n}}_{0})=\det(L_{\infty})=|\mathfrak{b}|$. Using Eq.~(\ref{eq2:twop}), we find that
\[\det(L^{\overline{n}}_{\infty})=\det(L^{\overline{n-1}})=|-\mathfrak{a}+(n-1)\mathfrak{b}|.\]
Thus,  the determinant condition holds at crossing $c'$ in $L^{\overline{n}}$ if and only if 
\[\{-\mathfrak{a}+(n-1)\mathfrak{b}\} \mathfrak{b}>0.\]
That means the determinant condition holds at $c'$ if and only if for $\mathfrak{b}\gtrless 0$, 
\begin{equation}\label{eq3:twop}(-\mathfrak{a}+(n-1)\mathfrak{b})\gtrless 0 \iff (n-1)\mathfrak{b} \gtrless \mathfrak{a}\iff (n-1)|\mathfrak{b}|>|\mathfrak{a}| \end{equation}

\noindent Next, we proof the result using induction hypothesis. For $n=1$, $L^{\overline{1}}$ is a quasi-alternating link since property (I) holds at $c$ in $L$. For $n=2$, using Eq.~(\ref{eq3:twop}) the determinant condition holds at $c'$ whenever $|\mathfrak{b}|>|\mathfrak{a}|$. Furthermore, $L^{\overline{2}}_{\infty}$ is nothing but the link  $L^{\overline{1}}$ and $L^{\overline{2}}_{0}$ is equivalent to $L_{\infty}$. Since both $L^{\overline{1}}$ and $L_{\infty}$ are quasi-alternating by hypothesis, $c'$ is a quasi-alternating crossing of $L^{\overline{2}}$. Assume that the result is true for all positive integers less than $n$. We have to prove the result for $n$. 

\noindent Given that $\det(L_0)<\det(L_{\infty})$, this implies $|\mathfrak{a}|<|\mathfrak{b}|$. It is evident that for $|\mathfrak{a}|<|\mathfrak{b}|$, Eq.~(\ref{eq3:twop}) holds, and hence the determinant condition is satisfied at $c'$ in $L^{\overline{n}}$. Moreover, $L^{\overline{n}}_{\infty}$ is nothing but the link $L^{\overline{n-1}}$, which is quasi-alternating by induction hypothesis, and $L^{\overline{n}}_{0}$ is equivalent to $L_{\infty}$ which is also quasi-alternating since $L$ is quasi-alternating at $c$. Since both $L^{\overline{n}}_{0}$ and  $L^{\overline{n}}_{\infty}$ are quasi-alternating, $c'$ is a quasi-alternating crossing of  $L^{\overline{n}}$. Hence $L^{\overline{n}}$ is a quasi-alternating link.\\

\noindent Now, we consider the case of horizontal half twists.

\noindent Case (2) Let $c'$ be any crossing that belongs to the new $n$ horizontal half twists in  $L^{-\overline{n}}$. Without lose of generality, we take $c'$ as shown in Fig.~\ref{twop}(c), and $e'$ be its corresponding edge in $\mathcal{G}(L^{-\overline{n}})$.  Then
%%%%%%%%%%

\[s_{v}(L^{\overline{n}})=n~\sharp \left \{\text{ spanning trees ~} \mathcal{T} \text{~of~} L ~| ~e\in \mathcal{T}, \text{~and~} \mathsf{v}(\mathcal{T})=v+1 \right \} + \]
 \[\hspace{1cm}  \sharp \left \{ \text{ spanning trees ~} \mathcal{T} \text{~of~} L ~| ~e\notin \mathcal{T}, \text{~and~} \mathsf{v}(\mathcal{T})=v \right \} . \]
 
 \[ s_{v}(L^{-\overline{n}})=ns_{v}(L_0)+s_{v}(L_{\infty})\]

 \begin{equation}\label{eq5:twop}
\begin{split}
 \det(L^{-\overline{n}})&=\Big| \displaystyle \sum_{v}(-1)^{v}s_{v}(L^{-\overline{n}})\Big|=\Big|\displaystyle \sum_{v}(-1)^{v} \left\{ ns_{v}(L_0)+ s_{v}(L_{\infty})\right\} \Big| \\
&=\Big|n\displaystyle \sum_{v}(-1)^{v} s_{v}(L_0)+\displaystyle \sum_{v}(-1)^{v}s_{v}(L_{\infty} )\Big| = \Big|-n\mathfrak{a}+\mathfrak{b} \Big|
\end{split}
\end{equation}
Since $c$ is a quasi-alternating crossing of $L$,  we have $\mathfrak{a}\mathfrak{b}>0$ holds by Remark~\ref{rem:ab}.
It is evident that by performing smoothings at crossing $c'$ in $L^{-\overline{n}}$,  $L^{-\overline{n}}_{\infty}$ is equivalent to the link $L_{0}$ and $L^{-\overline{n}}_{0}$ corresponds to the link $L^{-\overline{(n-1)}}$.

 \noindent Therefore, we have, $\det(L^{-\overline{n}}_{\infty})=\det(L_{0})=|\mathfrak{a}|$. Using Eq.~(\ref{eq5:twop}), we find that 
\[\det(L^{-\overline{n}}_{0})=\det(L^{-\overline{(n-1)}})=|-(n-1)\mathfrak{a}+\mathfrak{b}|.\]
Thus, from Eq.~(\ref{eq5:twop}) the determinant condition holds at crossing $c'$ in $L^{-\overline{n}}$ if and only if 
\[\{-(n-1)\mathfrak{a}+\mathfrak{b}\} (-\mathfrak{a})>0.\]
That means determinant condition holds if and only if for $\mathfrak{a}\gtrless 0$, $\mathfrak{-a}\lessgtr 0$ and
\begin{equation}\label{eq6:twop}
-(n-1)\mathfrak{a}+\mathfrak{b} \lessgtr 0 \iff \mathfrak{b} \lessgtr (n-1)\mathfrak{a}\iff |\mathfrak{b}|<(n-1)|\mathfrak{a}| 
\end{equation}

\noindent Next, we use induction hypothesis to prove the result. Clearly, for $n=1$, $L^{-\overline{1}}$ is a quasi-alternating link since property (I) holds at $c$ in $L$. For $n=2$, using Eq.~(\ref{eq6:twop}) the determinant condition holds at $c'$, whenever $|\mathfrak{b}|<|\mathfrak{a}|$. Further, $L^{-\overline{2}}_{0}$ is nothing but the link $L^{-\overline{1}}$ and $L^{-\overline{2}}_{\infty}$ is equivalent to $L_{0}$. Since both $L^{-\overline{1}}$ and $L_{0}$ are quasi-alternating by hypothesis, $c'$ is a quasi-alternating crossing of $L^{-\overline{2}}$ and hence $L^{-\overline{2}}$ is quasi-alternating. Assume that the result is true for all positive integers less than $n$. We have to prove the result for $n$.

\noindent Given that $\det(L_0)>\det(L_{\infty})$, this implies $|\mathfrak{a}|>|\mathfrak{b}|$. It is evident that for $|\mathfrak{b}|<|\mathfrak{a}|$,
 Eq.~(\ref{eq6:twop}) holds well, and the determinant condition holds at $c'$ of $L^{-\overline{n}}$. Moreover, $L^{-\overline{n}}_{0}$ is corresponds to the link $L^{-\overline{(n-1)}}$, which is quasi-alternating by induction hypothesis, and $L^{\overline{n}}_{\infty}$ is equivalent to $L_{0}$ which is also quasi-alternating since $L$ is quasi-alternating at $c$. Since both $L^{-\overline{n}}_{0}$ and  $L^{-\overline{n}}_{\infty}$ are quasi-alternating, $c'$ is a quasi-alternating crossing of $L^{-\overline{n}}$ and hence $L^{-\overline{n}}$ is quasi-alternating link. Hence the proof of the result.
\end{proof}

\begin{example} A diagram, $L$, of  $(2,8)$-torus link as shown in Fig.~\ref{exm:In1}(a) is quasi-alternating at crossing $c$. It is evident that $c$ holds property (I) with
\[\det(L_{\infty})=1<\det(L_0)=7.\] 
If crossing $c$ is replaced by two horizontal half twists of opposite type, then the resulting knot, $8_1$ depicted in Fig.~\ref{exm:8_1 & 11n}(a) is quasi-alternating.
\end{example}
 \begin{figure}[!ht] {\centering
 \subfigure[$8_1$ knot]{\includegraphics[scale=0.35]{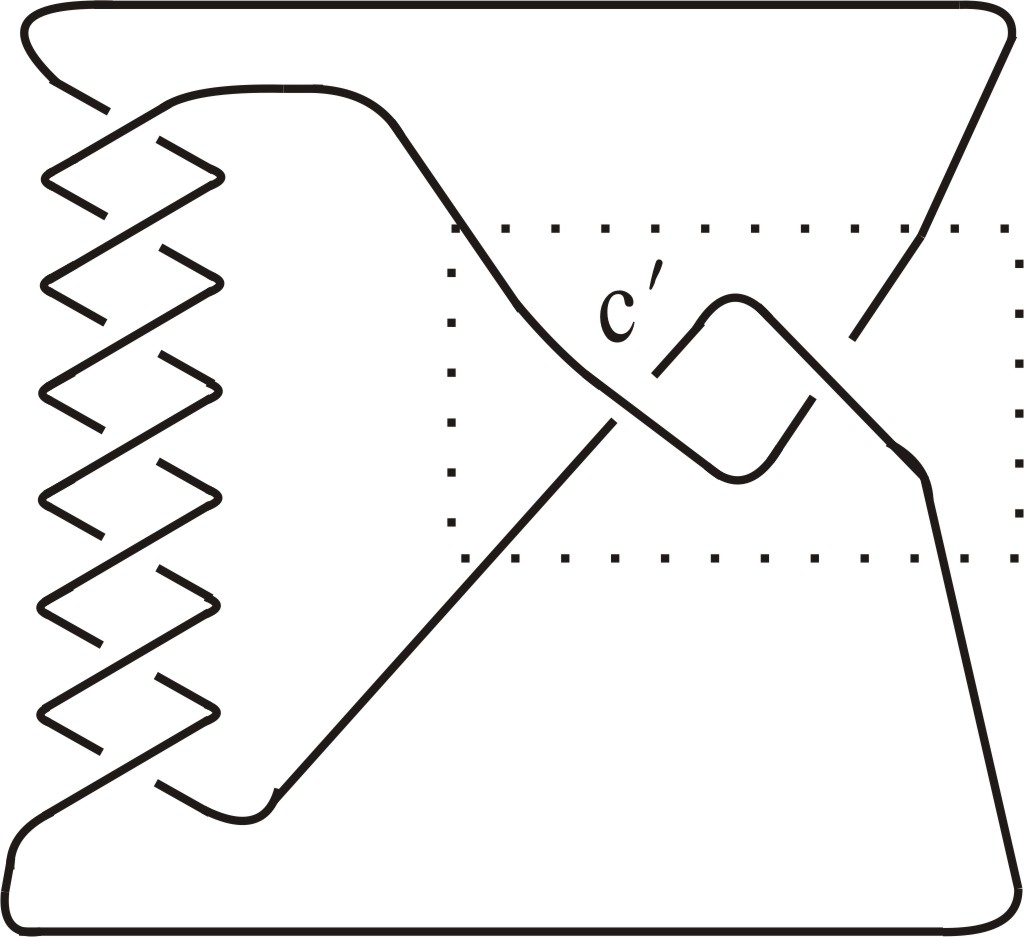} } \hspace{1.3cm}
\subfigure[$11n_{90}$ knot] {\includegraphics[scale=0.46]{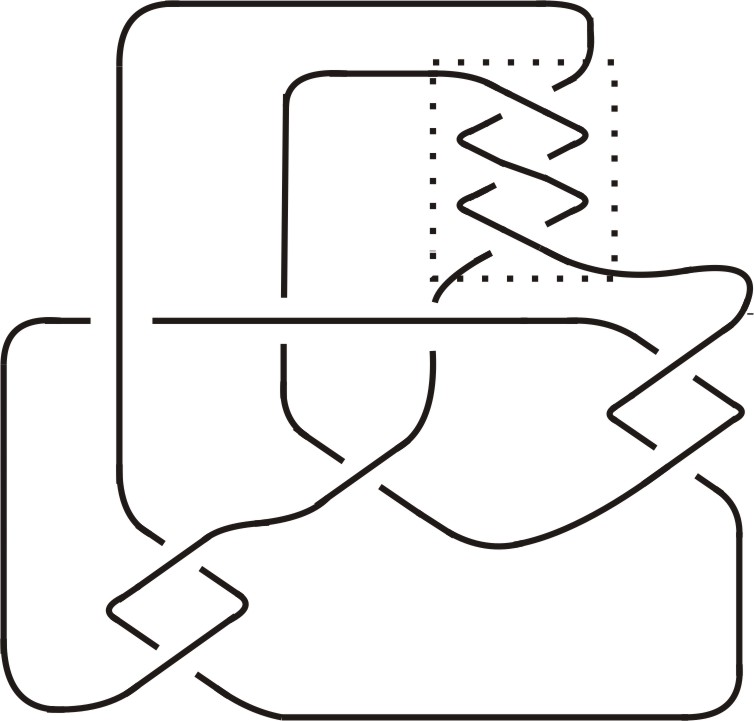}} \hspace{1.3cm}
\subfigure[$9_{44}$ knot] {\includegraphics[scale=0.35]{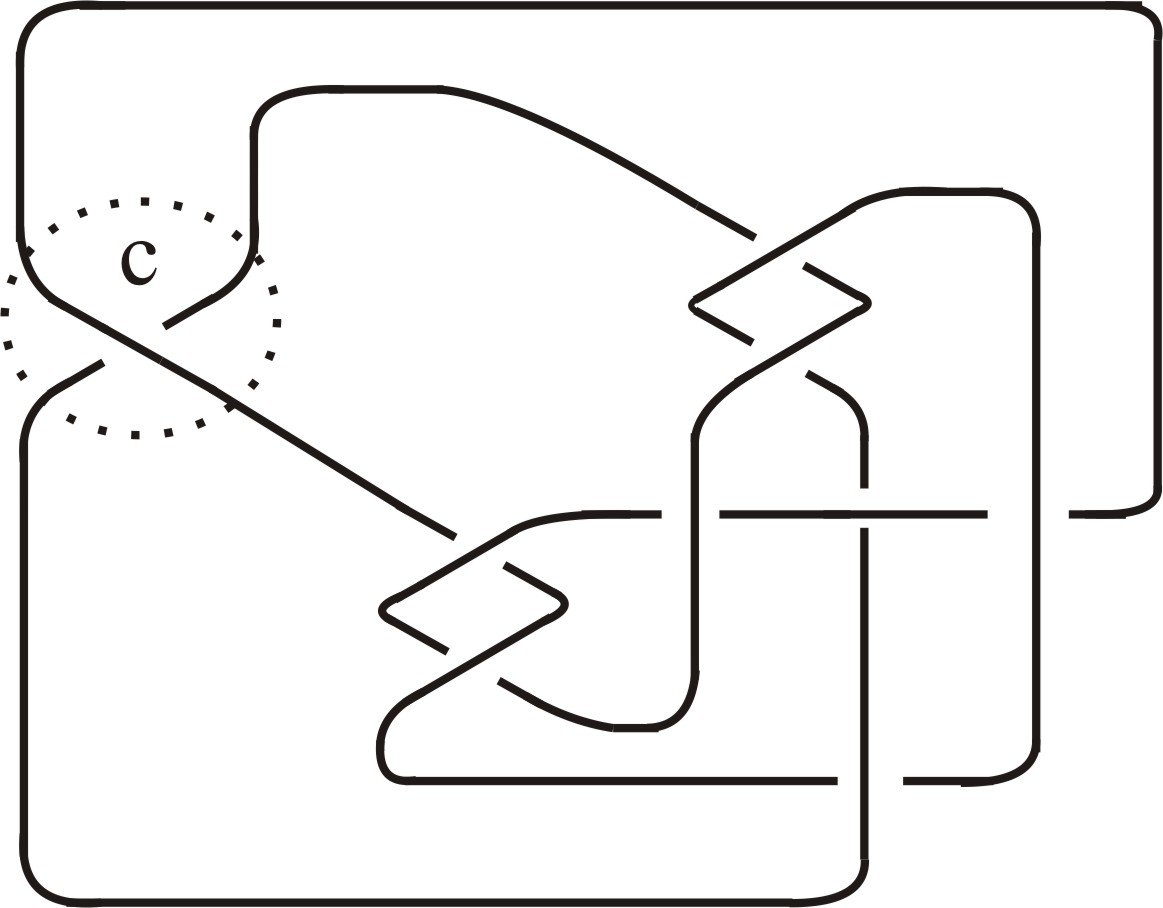}}
\caption{ }\label{exm:8_1 & 11n}}\end{figure}

\begin{example} Consider a crossing $c$ of a quasi-alternating knot $L$ as shown in Fig.~\ref{exm:43}. Then $L_{0}$ represents a diagram of the trivial knot, while $L_{\infty}$ corresponds to an alternating link, as illustrated in Fig.~\ref{exm:43in}. Further, 
\[\det(L_0)=1, \quad \det(L_{\infty})=14 , \quad\text{~ and~} \quad\det(L)=15.\]
  It is evident that both $L_0$ and $L_{\infty}$ are quasi-alternating and the determinant property holds at $c$. Additionally, the knot obtained from $L$ by switching crossing $c$ is a quasi-alternating $9_{43}$ knot. Therefore, $c$ is a quasi-alternating crossing and satisfying property (I), with $\det(L_0)<\det(L_{\infty})$.
 Replacing crossing $c$ with three vertical half twists of opposite type yields the resulting knot $11n_{90}$, depicted in Fig.~\ref{exm:8_1 & 11n}(b), is also quasi-alternating.
\end{example}
 \begin{figure}[!ht] {\centering
 \includegraphics[scale=0.45]{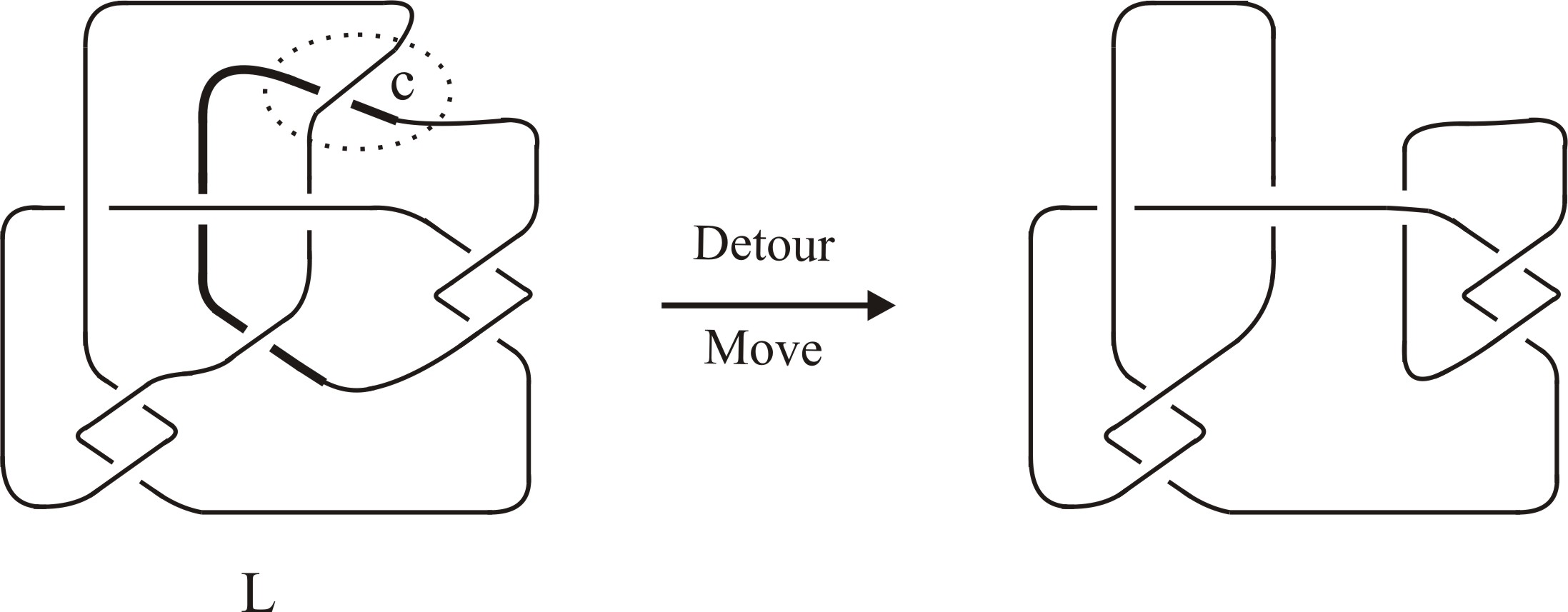} 
\caption{Knot equivalent to connected sum of figure eight and trefoil knot.}\label{exm:43}}\end{figure}

 \begin{figure}[!ht] {\centering
 \includegraphics[scale=0.45]{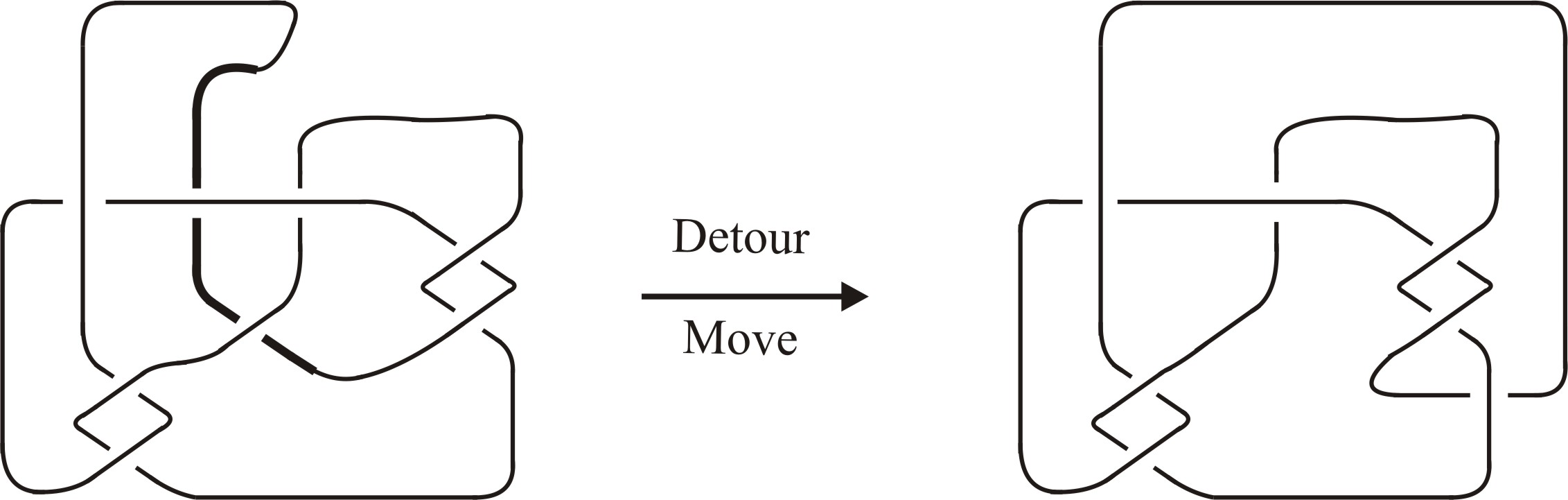} 
\caption{A diagram of an alternating link.}\label{exm:43in}}\end{figure}

\begin{example} Let $c$ be a quasi-alternating crossing of a quasi-alternating knot $8_{21}$, as shown in Fig.~\ref{exm:In2}(a). In this case,  
\[\det(L_{\infty})=2, \quad \text{~and~}\quad \det(L_0)=13.\]
 Moreover, the knot obtained from  $8_{21}$ by switching crossing $c$ is $6_2$ knot, which is  quasi-alternating. Thus, crossing $c$ satisfies property (I), with $\det(L_{\infty})<\det(L_0)$. By replacing crossing $c$ with three vertical half twists of opposite type, the resulting knot $9_{42}$ is a non quasi-alternating knot, see Fig.~\ref{exm:In2}(b). 
\end{example}

\begin{example} Let $c$ be  a quasi-alternating crossing of a quasi-alternating knot, $L$, as shown in Fig.~\ref{exm:8_1 & 11n}(c). It is evident that $L_0$ is Hopf link, $L_{\infty}$ is $8_{21}$ knot, and the knot obtained by switching crossing $c$ is $6_3$. 
 Therefore, we have
 \[\det(L)=17=2+15=\det(L_0)+\det(L_{\infty}).\] 
 Since Hopf link, $8_{21}$  and $6_3$ knots are quasi-alternating, crossing $c$ is quasi-alternating and holds property (I) with $\det(L_0)<\det(L_{\infty})$. In this case, if crossing $c$ is replaced  by seven horizontal half twists of opposite type, then the resulting knot, $L^{-\overline{7}}$, is illustrated in Fig.~\ref{exm:In1}(b) has $\det(L^{-\overline{7}})=1$. Since $\det(L^{-\overline{7}})=1$, there does not exist any crossing in any diagram of $L^{-\overline{7}}$ satisfying the determinant property. Therefore, $L^{-\overline{7}}$ is not quasi-alternating.
\end{example}

\noindent Now, we will prove the Theorem~\ref{Thm:alt_tan_op}.
\begin{proof}
Consider a quasi-alternating link $L$ with a quasi-alternating crossing $c$ satisfies either property (II) or property (III). Let $T$ be an alternating tangle of opposite type with respect to crossing $c$.\\
(1) Assume that $c$ holds property (II). Then, $L^{\overline{2}}$ is a quasi-alternating link by Lemma~\ref{lemma:opptwist}. Let $c'_1$ and $c'_2$ be new vertical twisted crossings in $L^{\overline{2}}$ as illustrated in Fig.~\ref{fig1:thm:I II}(b). Then both $c'_1$ and $c'_2$ are quasi-alternating crossings by of Lemma~\ref{lemma:opptwist}, and the tangle $T$ is an alternating tangle of same type with respect to both $c'_1$ and $c'_2$.

\noindent Let $L'_1$ and $L'_2$ be the links obtained from $L^{\overline{2}}$ by  replacing crossing $c'_1$ and $c'_2$ with tangle $T$, respectively,  see Fig.~\ref{fig1:thm:I II}.
For $i=1,2$,  $c'_i$ is a quasi-alternating crossing and $T$ is an alternating tangle of same type with respect to $c'_i$. It follows from Theorem~\ref{Thm:pre:m} that  $L'_1$ and $L'_2$ are quasi-alternating links. Furthermore, Theorem~\ref{Thm:pre:m} implies that every crossing of $T$ is a quasi-alternating crossing in $L'_{i}$, for $i=1,2$.\\
It is evident that the links $L'_1$ and $L'_2$ are precisely the links obtained from $L$ by replacing crossing $c$ with alternating tangles $T_+$ and $T_{-}$, respectively, of opposite type. Hence  the result is true in this case.  Thus, whenever property (II) $c$ holds at crossing $c$ in $L$, the links obtained from $L$ by replacing crossing $c$ with tangles $T_+$ or $T_-$ are quasi-alternating. Hence, this construction preserves the quasi-alternating nature of the links.\\

 \begin{figure}[!ht] {\centering
 \subfigure[$L$]{\includegraphics[scale=0.5]{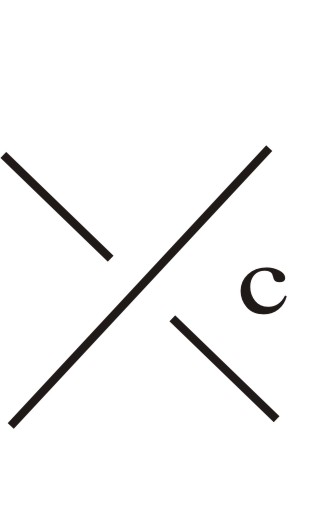} } \hspace{1.3cm}
\subfigure[$L^{\overline{2}}$] {\includegraphics[scale=0.45]{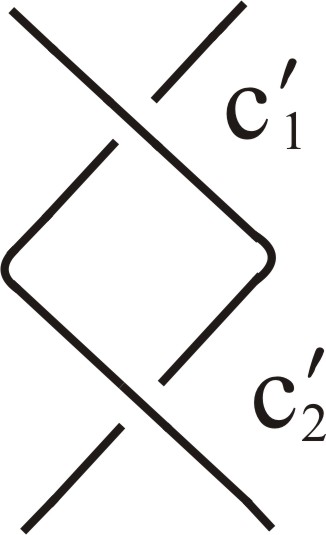}}\hspace{1.3cm}
\subfigure[$L'_1$] {\includegraphics[scale=0.5]{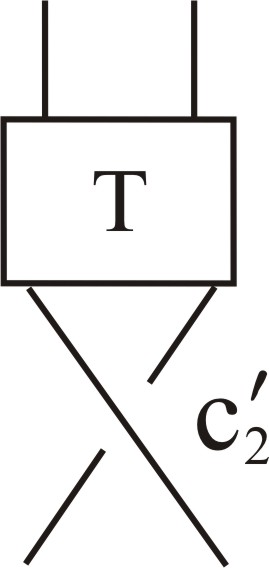}}\hspace{1.3cm}
\subfigure[$L'_2$] {\includegraphics[scale=0.5]{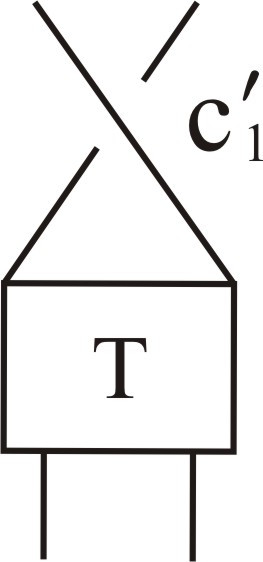}}
\caption{ }\label{fig1:thm:I II}}\end{figure}

\noindent(2) In the case where the crossing $c$ satisfies property (III), the  link $L^{-\overline{2}}$ is a quasi-alternating link by Lemma~\ref{lemma:opptwist}. Let $c'_1$ and $c'_2$ be new horizontal twisted crossings in $L^{-\overline{2}}$ as illustrated in Fig.~\ref{fig2:thm:I II}(b). Then both $c'_1$ and $c'_2$ are quasi-alternating crossings by of Lemma~\ref{lemma:opptwist}, and the tangle $T$ is an alternating tangle of same type with respect to both $c'_1$ and $c'_2$. 
 \begin{figure}[!ht] {\centering
 \subfigure[$L$]{\includegraphics[scale=0.45]{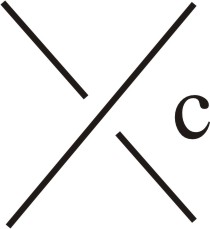} } \hspace{1cm}
\subfigure[$L^{-\overline{2}}$] {\includegraphics[scale=0.45]{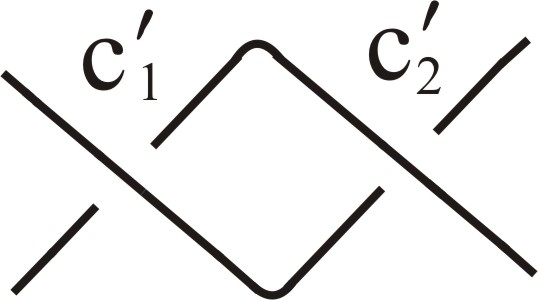}}\hspace{1cm}
\subfigure[$L'_1$] {\includegraphics[scale=0.45]{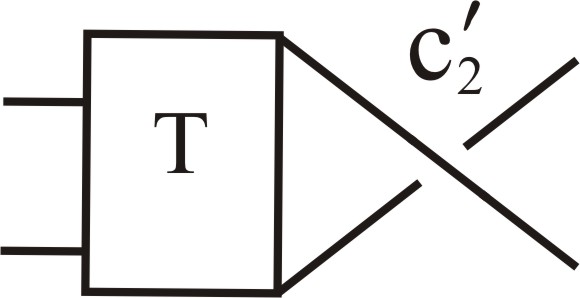}}\hspace{1cm}
\subfigure[$L'_2$] {\includegraphics[scale=0.45]{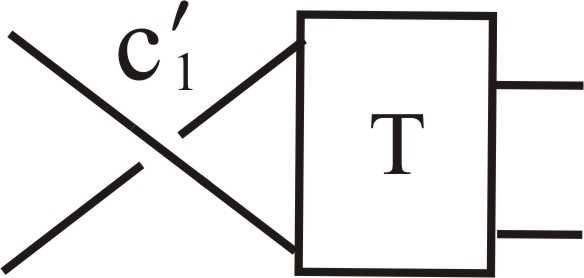}}
\caption{ }\label{fig2:thm:I II}}\end{figure}
The remaining proof will proceed in a manner similar to part 1.

\noindent Specifically, we define the links $L'_1$  and $L'_2$  obtained from $L$ by replacing crossings $c'_1$ and $c'_2$ with the tangle $T$, respectively, see Fig.~\ref{fig2:thm:I II}. Since both $c'_1$  and $c'_2$ are quasi-alternating crossings, and $T$ is an alternating tangle of the same type with respect to these crossings. It follows from Theorem ~\ref{Thm:pre:m} that $L'_1$ and $L'_2$ are quasi-alternating links. 
Additionally, Theorem~\ref{Thm:pre:m} guarantees that every crossing of $T$ is a quasi-alternating crossing in $L'_1$ and $L'_2$. Thus, the links $L'_1$   and  $L'_2$ are precisely the links obtained from $L$ by replacing crossing $c$ with alternating tangles $T^+$ and $T^{-}$, respectively, of opposite type. Hence, this construction preserves the quasi-alternating nature of the links.
\end{proof}

\noindent A natural question arises about the existence of links and crossings satisfying property (I), (II) and (III). The answer to this question is stated in the following theorem.
\begin{theorem}\label{Thm1} 
There exist infinitely many quasi-alternating links with quasi-alternating crossings satisfying property (II) and (III).
\end{theorem}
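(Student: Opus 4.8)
The plan is to build the two required infinite families separately, using the twisting construction of Lemma~\ref{lemma:opptwist} as a machine that turns a single seed into infinitely many quasi-alternating links, each carrying a new quasi-alternating crossing whose two smoothings have controllably different determinants. The point is that the proof of Lemma~\ref{lemma:opptwist} already identifies the smoothings of every new crossing, so the whole argument reduces to reading these off, checking the relevant determinant inequality, and verifying that property~(I) is inherited. It therefore suffices to exhibit one explicit seed for each direction; both are already available in the paper.

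First I would fix a seed for property~(II): the quasi-alternating knot $L$ of Fig.~\ref{exm:43}, for which property~(I) holds and $\det(L_0)=1<\det(L_\infty)=14$, i.e. $|\mathfrak{a}|<|\mathfrak{b}|$. Applying Lemma~\ref{lemma:opptwist}(1) for each $n>1$ gives a quasi-alternating link $L^{\overline{n}}$ in which every new crossing is quasi-alternating. Fixing such a crossing $c'$, the proof of Lemma~\ref{lemma:opptwist} shows that $L^{\overline{n}}_0$ is equivalent to $L_\infty$ and $L^{\overline{n}}_\infty=L^{\overline{n-1}}$, so
\[\det(L^{\overline{n}}_0)=|\mathfrak{b}|,\qquad \det(L^{\overline{n}}_\infty)=|-\mathfrak{a}+(n-1)\mathfrak{b}|=(n-1)|\mathfrak{b}|-|\mathfrak{a}|,\]
the last equality using $\mathfrak{a}\mathfrak{b}>0$ and $n>1$. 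For $n\ge 3$ this yields $\det(L^{\overline{n}}_0)<\det(L^{\overline{n}}_\infty)$. Moreover, switching $c'$ cancels it against a neighbouring twist crossing by a Reidemeister~II move and returns $L^{\overline{n-2}}$, which is quasi-alternating by Lemma~\ref{lemma:opptwist}; hence property~(I), and therefore property~(II), holds at $c'$. Since $\det(L^{\overline{n}})=n|\mathfrak{b}|-|\mathfrak{a}|$ is strictly increasing, the $L^{\overline{n}}$ are pairwise inequivalent, giving infinitely many examples satisfying property~(II).

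The family for property~(III) is obtained symmetrically. Here I would take as seed the $(2,8)$-torus link of Fig.~\ref{exm:In1}(a), for which property~(I) holds and $\det(L_\infty)=1<\det(L_0)=7$, i.e. $|\mathfrak{b}|<|\mathfrak{a}|$, and apply Lemma~\ref{lemma:opptwist}(2) to form $L^{-\overline{n}}$ for each $n>1$. Reading off the smoothings of a new horizontal crossing $c'$ from the proof of the lemma, $L^{-\overline{n}}_\infty$ is equivalent to $L_0$ while $L^{-\overline{n}}_0=L^{-\overline{n-1}}$, so $\det(L^{-\overline{n}}_\infty)=|\mathfrak{a}|$ and $\det(L^{-\overline{n}}_0)=(n-1)|\mathfrak{a}|-|\mathfrak{b}|$; for $n\ge 3$ this gives $\det(L^{-\overline{n}}_\infty)<\det(L^{-\overline{n}}_0)$. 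As before, switching $c'$ produces the quasi-alternating link $L^{-\overline{n-2}}$, so property~(I) and hence property~(III) hold at $c'$, and $\det(L^{-\overline{n}})=n|\mathfrak{a}|-|\mathfrak{b}|$ grows without bound, producing infinitely many distinct links.

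The only genuinely delicate points, and the ones I would write out carefully, are the two bookkeeping steps: first, confirming that switching the new crossing $c'$ really returns a link of the same twist family, so that property~(I) is inherited from the seed through Lemma~\ref{lemma:opptwist} rather than assumed afresh; and second, keeping the signs straight so that the stated equalities for $\det(L^{\overline{n}}_\infty)$ and $\det(L^{-\overline{n}}_0)$ hold as written and the inequalities become strict precisely for $n\ge 3$. Everything else is a direct application of Lemma~\ref{lemma:opptwist} to the two explicit seeds, so the main conceptual content is simply recognizing that the lemma's own determinant computations already certify that the newly created crossings satisfy properties~(II) and~(III).
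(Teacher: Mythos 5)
Your proof is correct, but it takes a genuinely different route from the paper's. The paper starts from an \emph{arbitrary} quasi-alternating diagram $L$ with a quasi-alternating crossing $c$ and inserts $n>1$ half twists of the \emph{same} type; Lemma~\ref{lemma:sametwist} then supplies the determinant inequalities $\det(L^{-n}_{0})<\det(L^{-n}_{\infty})$ and $\det(L^{n}_{\infty})<\det(L^{n}_{0})$ at the new crossings, and property (I) there is obtained by noting that switching a new crossing cancels two twists, landing on $L_0$, $L_\infty$ or $L^{\pm(n-2)}$, all quasi-alternating by the definition of $c$ and Theorem~\ref{Thm:pre:m}. You instead fix two concrete seeds already known to satisfy properties (II) and (III) and grow infinite families by \emph{opposite}-type twisting via Lemma~\ref{lemma:opptwist}, reading off the smoothings of the new crossings from that lemma's proof. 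Both arguments are sound; the trade-off is that the paper's version is more self-contained and more general (it needs no prior example of a property-(I) crossing, and shows such crossings arise from \emph{every} quasi-alternating link with a marked quasi-alternating crossing), whereas yours presupposes the two verified seed examples but in return gives a cleaner certificate of infinitude --- the strictly increasing determinants $n|\mathfrak{b}|-|\mathfrak{a}|$ and $n|\mathfrak{a}|-|\mathfrak{b}|$ guarantee the links in each family are pairwise distinct, a point the paper's closing sentence (``since there are infinitely many quasi-alternating links\dots'') leaves comparatively loose. Your two flagged ``delicate points'' (inheritance of property (I) under the crossing switch, and the sign bookkeeping giving strict inequalities exactly for $n\ge 3$) are indeed the right things to check, and both check out.
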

\noindent Before proving this theorem, we state and prove the following lemma.
\begin{lemma}\label{lemma:sametwist} 
Let $L'$ be a link obtained from a quasi-alternating diagram $L$ by replacing a quasi-alternating crossing $c$ of $L$ with $n>1$ half twists of same type. Let $c'$ be any new half twist in $L'$. Then 
\begin{enumerate}
\item $\det(L'_{\infty})<\det(L'_{0})$, whenever the twists at $c$ are vertical half twists as shown in Fig.~\ref{twsa}(a), and
\item $\det(L'_{\infty})>\det(L'_{0})$, whenever the twists at $c$ are horizontal half twists as shown in Fig.~\ref{twsa}(c), 

\end{enumerate} 
where $L'_{\infty}$ and $L'_{0}$ are the links obtained from $L'$ by perform smoothings at crossing $c'$ of $L'$ as shown in Fig.~\ref{1}. 
\end{lemma}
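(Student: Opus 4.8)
The plan is to follow the strategy of Lemma~\ref{lemma:opptwist} almost verbatim, but with the opposite-type (negative-edge) twists there replaced by same-type (positive-edge) twists; the only real change is that the two relevant terms now \emph{reinforce} instead of cancel, which is exactly what reverses the inequalities. First I would fix the checkerboard coloring of $L$ so that the edge $e=u_1u_2$ of $\mathcal{G}(L)$ corresponding to $c$ is positive. Since $c$ is quasi-alternating, Remark~\ref{rem:ab} gives $\mathfrak{a}\mathfrak{b}>0$, where $\mathfrak{a}=\sum_v(-1)^vs_{v-1}(L_0)$, $\mathfrak{b}=\sum_v(-1)^vs_v(L_\infty)$, and $|\mathfrak{a}|=\det(L_0)$, $|\mathfrak{b}|=\det(L_\infty)$. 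Because the spanning-tree determinant formula of \cite{champanerkar2009twisting} applies to any connected diagram, the whole proof reduces to computing $\det(L'_0)$ and $\det(L'_\infty)$ at a new crossing $c'$ and comparing them.

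For the vertical case (Fig.~\ref{twsa}(a)), the structural claim is that replacing the positive edge $e$ by $n$ same-type vertical half-twists replaces $e$ by a \emph{series} path of $n$ positive edges in $\mathcal{G}(L')$ (whereas opposite-type vertical twists produced a series path of \emph{negative} edges). Since $c'$ has the same handedness as $c$, its edge $e'$ is positive, so the $0$-smoothing contracts $e'$ and the $\infty$-smoothing deletes it. Contracting $e'$ shortens the path to $n-1$ positive edges, so $L'_0$ is the diagram obtained from $L$ by replacing $c$ with $n-1$ same-type vertical half-twists; deleting $e'$ breaks the path and, after removing the resulting nugatory crossings, $L'_\infty$ is equivalent to $L_\infty$. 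A spanning-tree count in the spirit of Eq.~(\ref{eq2:twop}) (a spanning tree of $\mathcal{G}(L'_0)$ either contains all $n-1$ path edges, contributing through $\mathcal{G}(L_0)$, or omits exactly one of them in one of $n-1$ ways, contributing through $\mathcal{G}(L_\infty)$) then gives $\det(L'_0)=|\mathfrak{a}+(n-1)\mathfrak{b}|$ and $\det(L'_\infty)=|\mathfrak{b}|$. As $\mathfrak{a}\mathfrak{b}>0$ the terms reinforce, so for $n>1$ we get $\det(L'_0)=|\mathfrak{a}|+(n-1)|\mathfrak{b}|\geq\det(L_0)+\det(L_\infty)>\det(L_\infty)=\det(L'_\infty)$, which is conclusion~(1).

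The horizontal case (Fig.~\ref{twsa}(c)) is symmetric: same-type horizontal twists replace $e$ by $n$ \emph{parallel} positive edges between $u_1$ and $u_2$ (the positive analogue of the parallel negative edges behind Eq.~(\ref{eq5:twop})). Here contracting $e'$ merges $u_1$ and $u_2$ and turns the remaining $n-1$ parallel edges into loops, so $L'_0$ is equivalent to $L_0$ and $\det(L'_0)=|\mathfrak{a}|$; deleting $e'$ leaves $n-1$ parallel edges, so $L'_\infty$ is the diagram obtained by replacing $c$ with $n-1$ same-type horizontal half-twists, and the parallel-edge count (a spanning tree uses at most one of the parallel edges) gives $\det(L'_\infty)=|(n-1)\mathfrak{a}+\mathfrak{b}|=(n-1)|\mathfrak{a}|+|\mathfrak{b}|$. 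For $n>1$ this strictly exceeds $|\mathfrak{a}|=\det(L'_0)$, giving conclusion~(2).

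The substantive step, which I expect to be the main obstacle, is rigorously justifying the two Tait-graph pictures together with the smoothing bookkeeping: that same-type vertical twists yield a series path of positive edges while same-type horizontal twists yield parallel positive edges, and that for the positive-edge crossing $c'$ the $0$- and $\infty$-smoothings correspond to contraction and deletion respectively (the reverse of the opposite-type situation, where $c'$ has opposite handedness). Reading these off directly from Fig.~\ref{twsa} is precisely where the ``same type'' hypothesis enters; once they are pinned down, the determinant evaluations and the final inequalities are routine.
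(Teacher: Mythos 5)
Your proposal is correct and follows essentially the same route as the paper's proof: fix the coloring so that $e$ is positive, identify $L'_0$ with $L^{n-1}$ and $L'_\infty$ with $L_\infty$ in the vertical case (and the mirror identification in the horizontal case), compute $\det(L'_0)=|\mathfrak{a}+(n-1)\mathfrak{b}|$ and $\det(L'_\infty)=|\mathfrak{b}|$ via the spanning-tree count, and conclude from $\mathfrak{a}\mathfrak{b}>0$ that the two terms reinforce. The paper writes out the counts $s_v(L^n)=s_{v-n}(L_0)+ns_{v-n+1}(L_\infty)$ and $s_v(L^{-n})=ns_{v-1}(L_0)+s_v(L_\infty)$ explicitly rather than phrasing them as contraction/deletion of a series path or parallel edges, but this is the same computation.
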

\begin{proof}
Let $c$ be a quasi-alternating crossing of a quasi-alternating link diagram $L$ and $e$ be its corresponding edge in $\mathcal{G}(L)$.
Let $L'$ be  the link obtained from $L$ by replacing  crossing $c$ with $n>1$ half twists of same type, as shown in Fig.~\ref{twsa}. 
 \begin{figure}[!ht] {\centering
 \subfigure[$L^{n}$]{\includegraphics[scale=0.5]{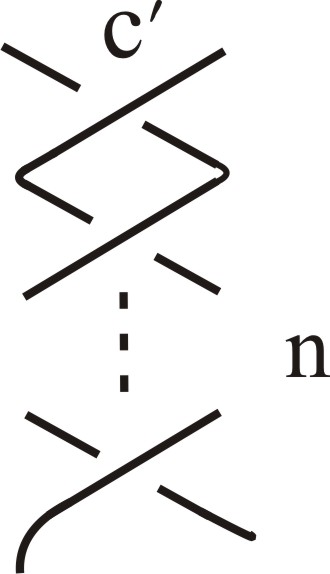} } \hspace{1cm}
\subfigure[$L$] {\includegraphics[scale=0.5]{Fig/19octa.jpg}}\hspace{1cm}
\subfigure[$L^{-n}$] {\includegraphics[scale=0.5]{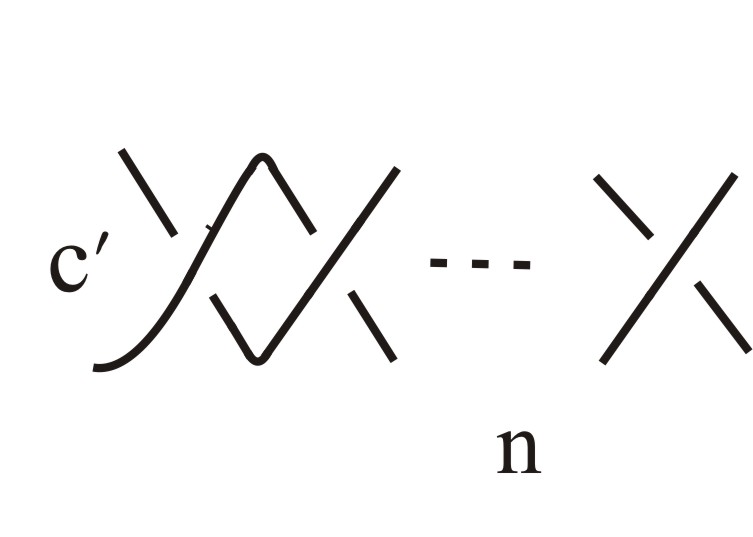}}
\caption{ Links $L$, $L^{n}$ and $L^{-n}$.}\label{twsa}}\end{figure}
\noindent Choose the checkerboard coloring of $L$ such that the edge $e(=u_1u_2)$ in $\mathcal{G}(L)$ is positive. Let $\mathcal{G}(L')$ be the Tait graph of $L'$ with the induced checkerboard coloring. First, we consider the case of vertical half twists.

\noindent Case (1) Let $c'$ be any crossing that belongs to the new $n$ vertical half twists in  $L'$. Without lose of generality, let $c'$ be as shown in Fig.~\ref{twsa}(a), and $e'$ be its corresponding edge in $\mathcal{G}(L')$. Denote $L'$ by $L^{n}$ here. Then
%%%%%%%%%%
\[s_{v}(L^{n})=\sharp \left \{\text{ spanning trees ~} \mathcal{T} \text{~of~} L ~| ~e\in \mathcal{T}, \text{~and~} \mathsf{v}(\mathcal{T})=v-n+1 \right \} + \]
 \[\hspace{2cm} n~~ \sharp \left \{ \text{ spanning trees ~} \mathcal{T} \text{~of~} L ~| ~e\notin \mathcal{T}, \text{~and~} \mathsf{v}(\mathcal{T})=v-n+1 \right \} . \]
 \begin{equation}\label{eq1:twsame}
  s_{v}(L^{n})=s_{v-n}(L_0)+ns_{v-n +1}(L_{\infty})
  \end{equation}
 \begin{equation}\label{eq2:twsame}
\begin{split}
 \det(L^{n})&=\Big| \displaystyle \sum_{v}(-1)^{v}s_{v}(L^{n})\Big|=\Big|\displaystyle \sum_{v}(-1)^{v} \{s_{v-n}(L_0)+n s_{v-n+1}(L_{\infty}) \}\Big| \\
&=\Big|\displaystyle \sum_{v}(-1)^{v} s_{v-n}(L_0)+n\displaystyle \sum_{v}(-1)^{v}s_{v-n+1}(L_{\infty}) \Big|\\
& = \Big|-(-1)^n\mathfrak{a}+n(-1)^{n-1}\mathfrak{b} \Big|=\Big|\mathfrak{a}+n\mathfrak{b} \Big|
\end{split}
\end{equation}
 Since $c$ is a quasi-alternating crossing of $L$, we have  $\mathfrak{a} \mathfrak{b}>0$ by Remark~\ref{rem:ab}.
Moreover, by perform smoothings at crossing $c'$ in $L^n$,  $L^n_{0}$ corresponds to the link $L^{n-1}$ 
 and $L^{n}_{\infty}$ is equivalent to the link $L_{\infty}$.
Therefore, $\det(L^{n}_{\infty})=\det(L_{\infty})=|\mathfrak{b}|$. Using Eq.~(\ref{eq2:twsame}), we have 
\[\det(L^{n}_{0})=\det(L^{n-1})=|\mathfrak{a}+(n-1)\mathfrak{b}|>|\mathfrak{b}|=\det(L^{n}_{\infty}),\]
since $\mathfrak{a}\mathfrak{b}>0$ and $n>1$.
Hence the desired result.\\

\noindent Case (2) Suppose that twists in $L'$ are horizontal half twists. Let $c'$ be any crossing that belongs to the new $n$ horizontal half twists in  $L'$. Without lose of generality, let $c'$ be as shown in Fig.~\ref{twsa}(c), and $e'$ be its corresponding edge in $\mathcal{G}(L')$. Denote $L'$ by $L^{-n}$ here. Then
%%%%%%%%%%
\[s_{v}(L^{n})=n~\sharp \left \{\text{ spanning trees ~} \mathcal{T} \text{~of~} L ~| ~e\in \mathcal{T}, \text{~and~} \mathsf{v}(\mathcal{T})=v \right \} + \]
 \[\hspace{2cm}\sharp \left \{ \text{ spanning trees ~} \mathcal{T} \text{~of~} L ~| ~e\notin \mathcal{T}, \text{~and~} \mathsf{v}(\mathcal{T})=v \right \} . \]
 \begin{equation}\label{eq4:twsame} s_{v}(L^{-n})=ns_{v-1}(L_0)+s_{v}(L_{\infty})
 \end{equation}
 \begin{equation}\label{eq5:twsame}
\begin{split}
 \det(L^{-n})&=\Big| \displaystyle \sum_{v}(-1)^{v}s_{v}(L^{-n})\Big|=\Big|\displaystyle \sum_{v}(-1)^{v} \{ns_{v-1}(L_0)+ s_{v}(L_{\infty})\} \Big| \\
&=\Big|n\displaystyle \sum_{v}(-1)^{v} s_{v-1}(L_0)+\displaystyle \sum_{v}(-1)^{v}s_{v}(L_{\infty}) \Big| = \Big|n\mathfrak{a}+\mathfrak{b} \Big|
\end{split}
\end{equation}
Since $c$ is a quasi-alternating crossing of $L$, we have $\mathfrak{a} \mathfrak{b}>0$ by Remark~\ref{rem:ab}.
It is evident that by smoothing at crossing $c'$ in $L^{-n}$,  $L^{-n}_{0}$ is equivalent to the link $L_{0}$ and $L^{-n}_{\infty}$ corresponds to the link $L^{-(n-1)}$. 
 Therefore, $\det(L^{-n}_{0})=\det(L_{0})=|\mathfrak{a}|$. Using Eq.~(\ref{eq5:twsame}), we have 
\[\det(L^{-n}_{\infty})=\det(L^{-(n-1)})=|(n-1)\mathfrak{a}+\mathfrak{b}|>|\mathfrak{a}|=\det(L^{-n}_{0}),\]
since $\mathfrak{a}\mathfrak{b}>0$ and $n>1$.
Hence, the desired result.
\end{proof}

%\begin{theorem}\label{Thm1} 
%There exist infinitely many quasi-alternating links with quasi-alternating crossings satisfying property (II) and (III).
%\end{theorem}
\noindent Now we are going to prove Theorem~\ref{Thm1}.
\begin{proof}
Consider a quasi-alternating link diagram $L$ with a quasi-alternating crossing $c$. We construct two links $L^{-n}$ and $L^n$ from $L$ by making $n>1$  horizontal and vertical half  twists, respectively,  at crossing $c$ in $L$.  Let $c'$ and $c''$ be any new half twisted crossings in $L^{-n}$ and $L^n$, respectively. Then by Lemma~\ref{lemma:sametwist}, at crossing $c'$ and $c''$ following inequality holds  

\begin{equation}\label{eq.II,III}
\det(L^{-n}_{0})<\det(L^{-n}_{\infty}),\quad \text{and}\quad \det(L^{n}_{\infty})<\det(L^{n}_{0}).
\end{equation} 
Further, by apply crossing change operation at $c''$ the resulting diagram becomes either $L_{0}$ for $n=2$, or $L^{-(n-2)}$ for $n>2$. Similarly, by apply crossing change operation at $c'$ the resulting diagram becomes  either $L_{\infty}$ for $n=2$, or  $L^{(n-2)}$ for $n>2$. Since $L$ is quasi-alternating at $c$, both $L_0$ and $L_{\infty}$ are quasi-alternating. Moreover, $L^{-(n-2)}$ and $L^{(n-2)}$ for $n>2$ are quasi-alternating by Theorem~\ref{Thm:pre:m}. Thus Property (I) holds at crossings $c'$ and $c''$ in $L^{-n}$ and $L^{n}$, respectively. \\
It is evident from Eq.~(\ref{eq.II,III}) that the crossings $c'$ holds property (II) in $L^{-n}$, and  the crossings $c''$ holds property (III) in $L^{n}$.

\noindent Thus, for a given quasi-alternating link diagram with a quasi-alternating crossing, one can construct new examples of distinct quasi-alternating links with quasi-alternating crossings that meet property (II) and (III). Since there are infinitely many quasi-alternating links, there exist infinitely many quasi-alternating links with quasi-alternating crossings satisfying these properties.   This concludes the proof of the theorem.
\end{proof}

\subsection{Constructing quasi-alternating links via non-alternating tangles}
\label{subsection:NonAlt}

 \begin{figure}[ph] {\centering
 \subfigure[crossing $c$]{\includegraphics[scale=.55]{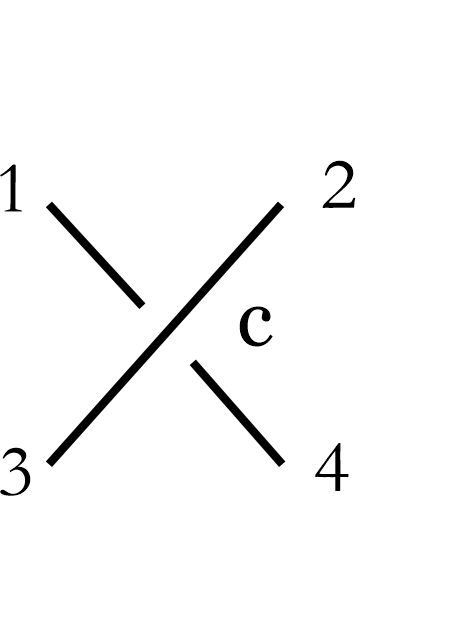} \label{Fig.lc}}\hspace{2cm}
 \subfigure[$T_{[q]}$]{\includegraphics[scale=.5]{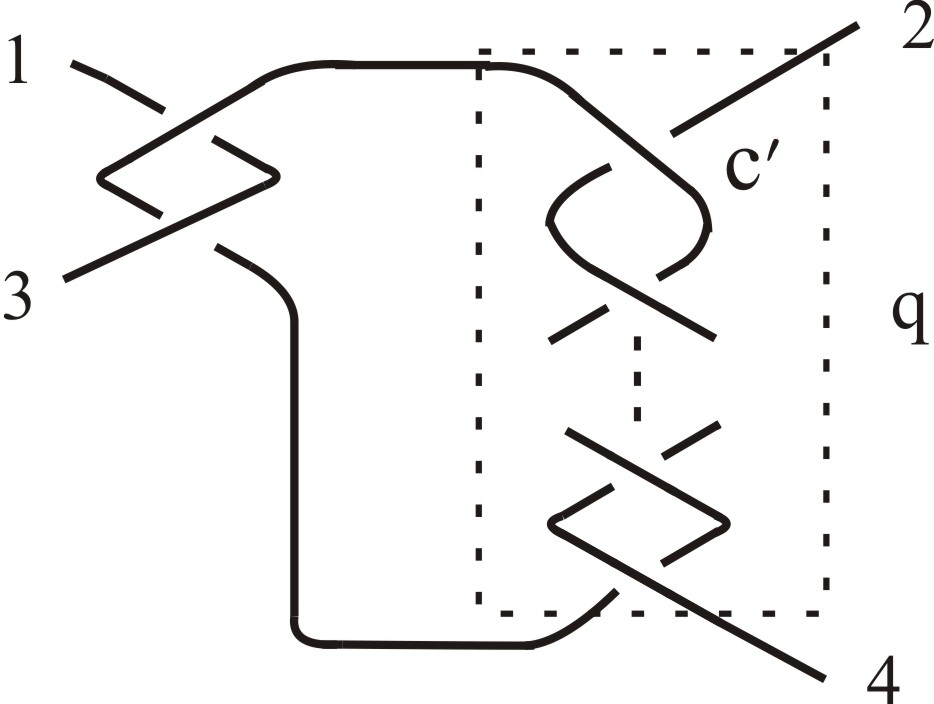} \label{Fig.t[q]}}\\
 
  \vspace{.3cm}
 \subfigure[$T^p$] {\includegraphics[scale=.5]{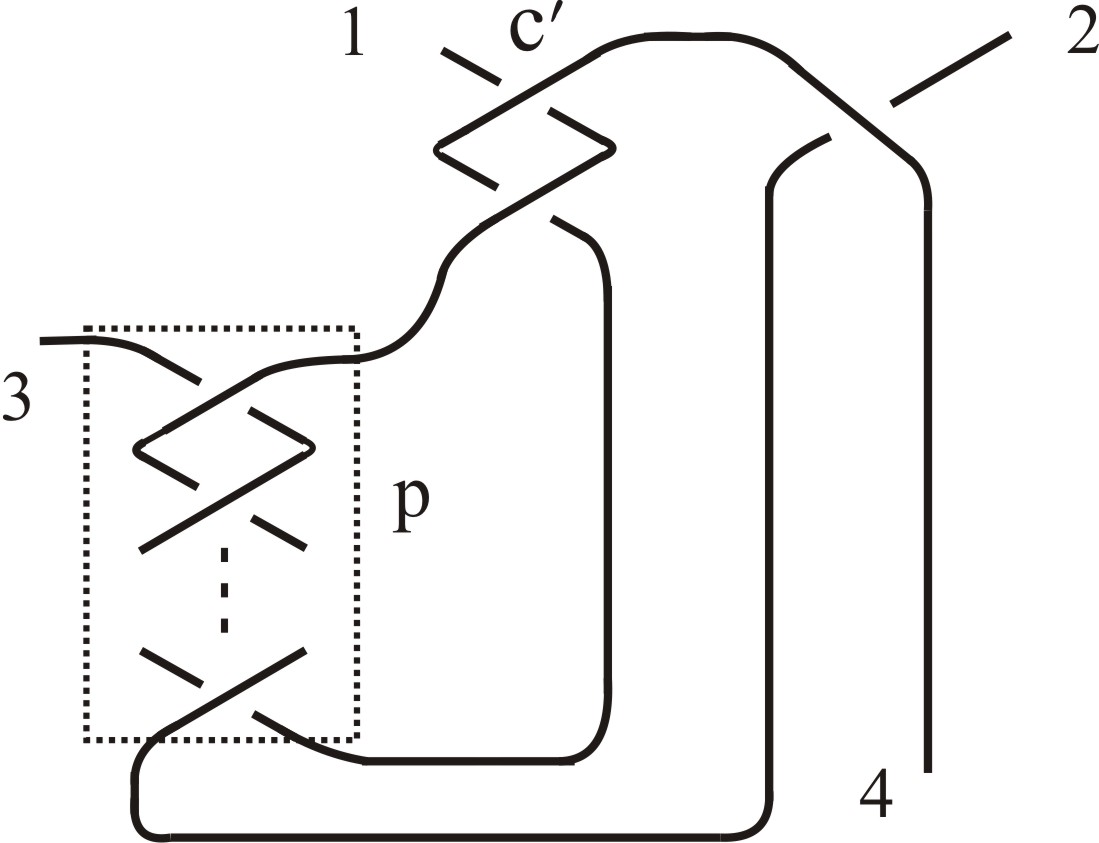}\label{Fig.t^p}}\hspace{1.5cm}
 \subfigure[$T_q$]{\includegraphics[scale=.5]{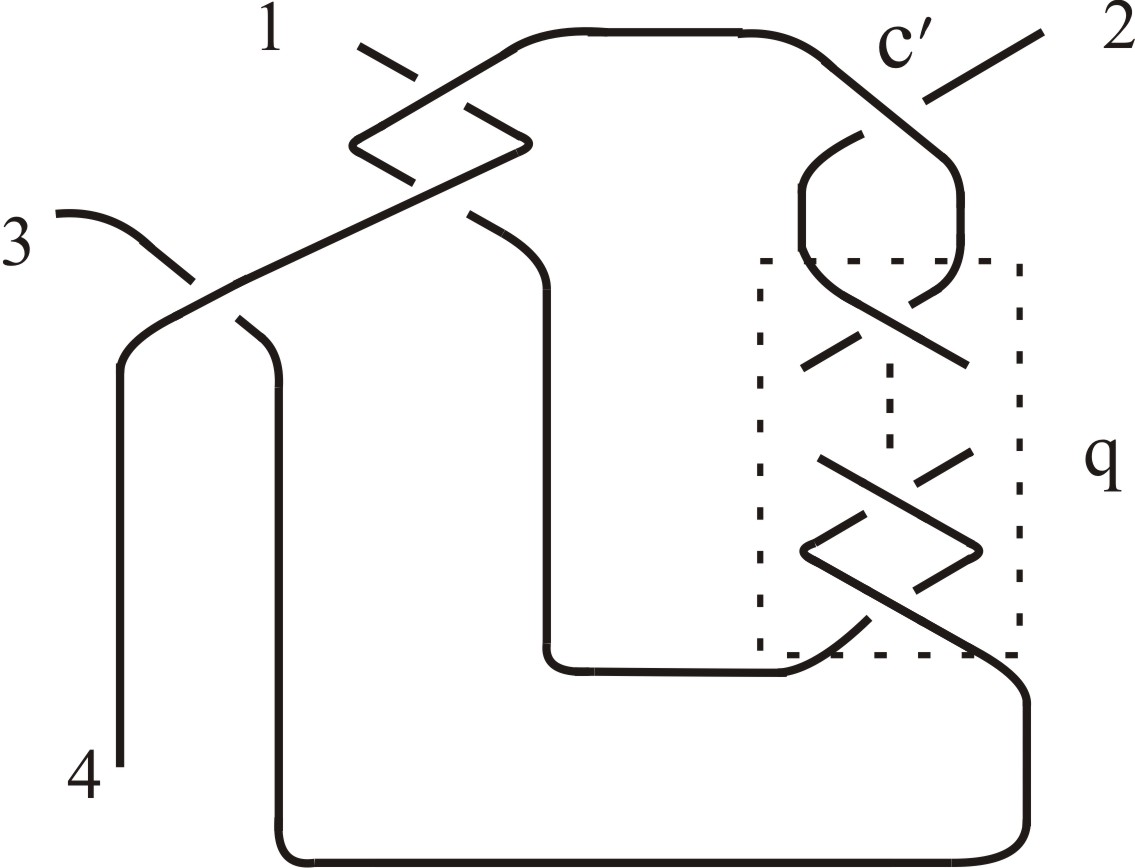}\label{Fig.t_q}}\\

 \subfigure[$T^{p,q}$]{\includegraphics[scale=.5]{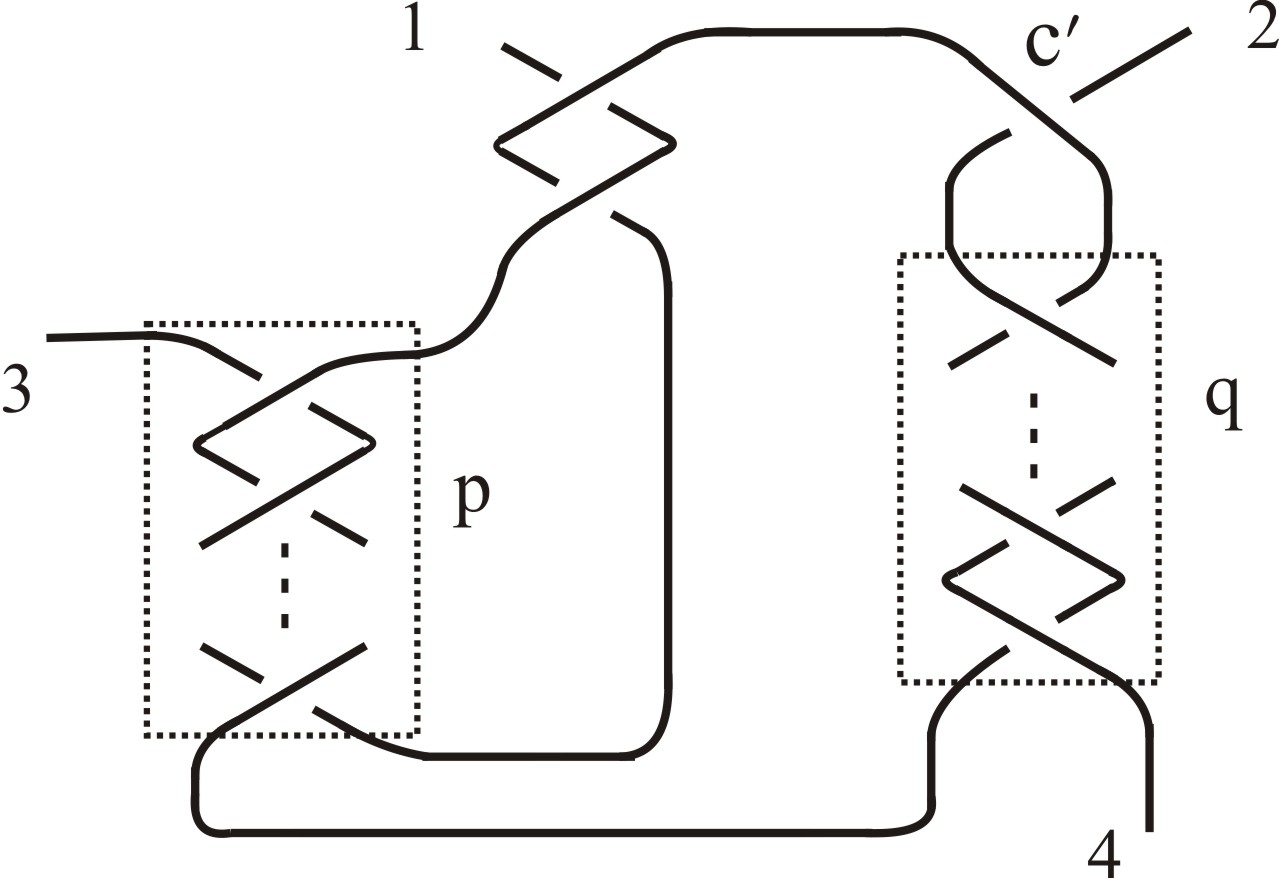}\label{Fig.t^pq}}\hspace{1.5cm}
 \subfigure[$T_{p,q}$]{\includegraphics[scale=.5]{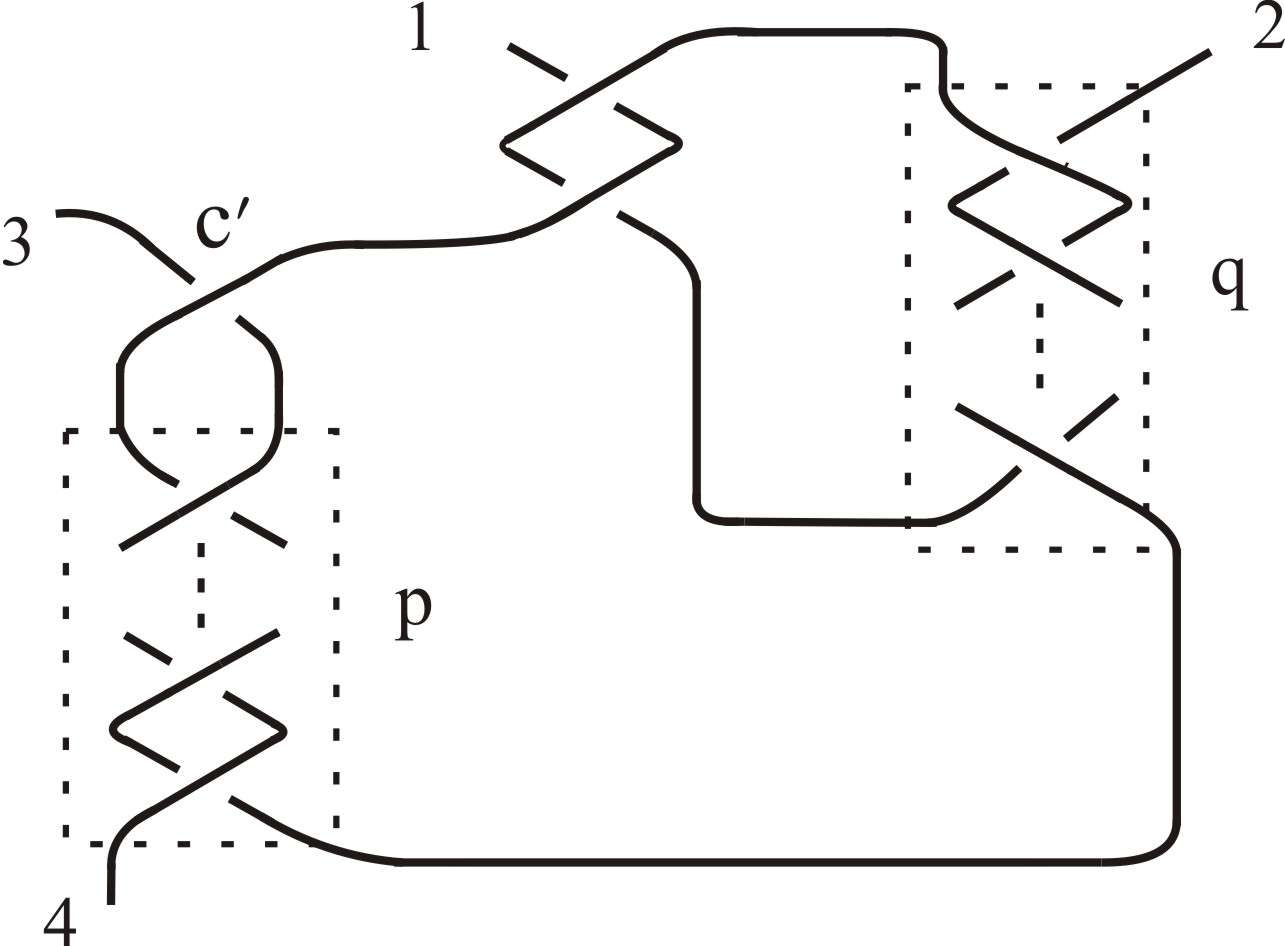}\label{Fig.t_pq}}\\

 \subfigure[$T^{[p,q]}_{\infty}$]{\includegraphics[scale=.5]{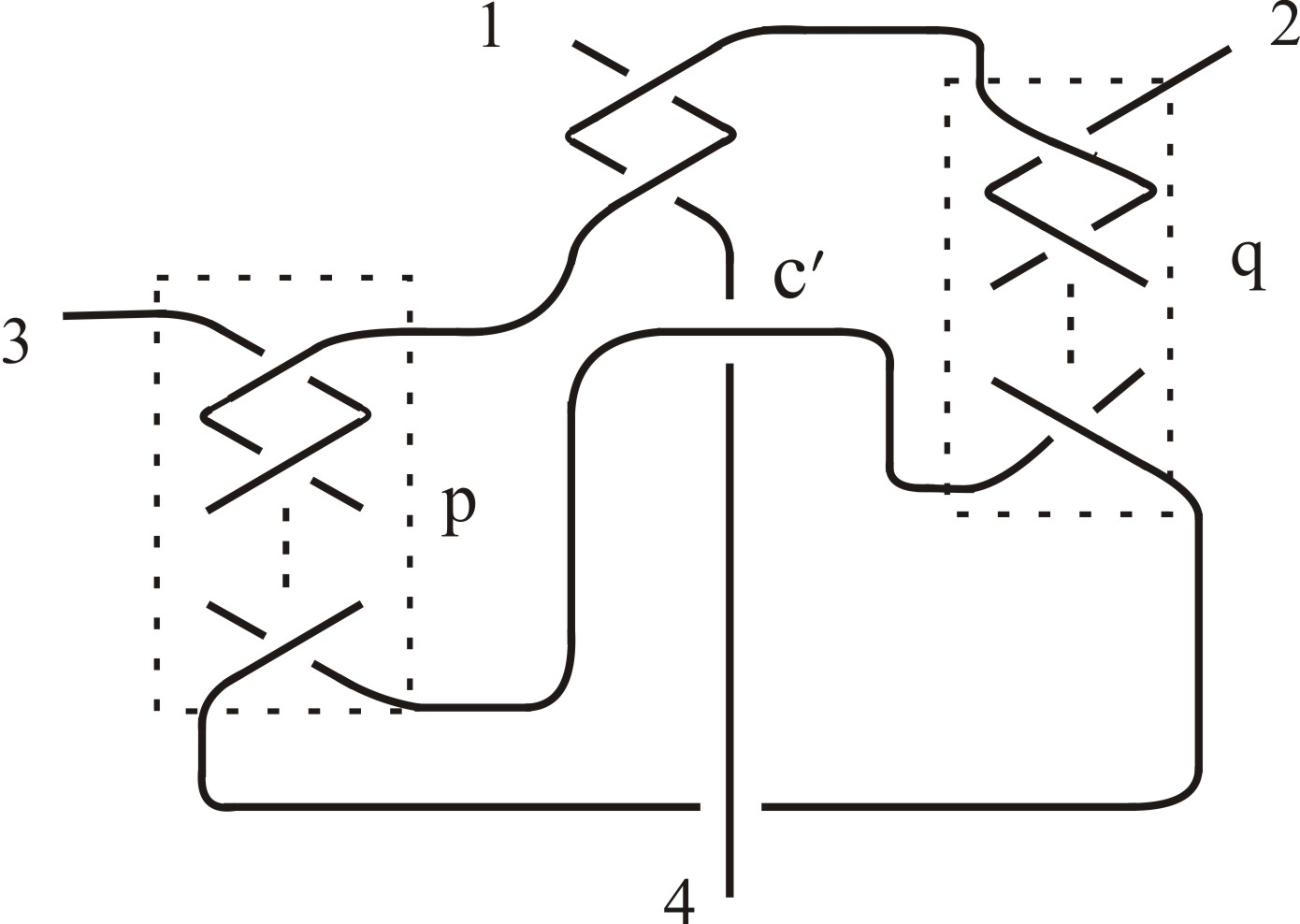} \label{Fig.t^{pq}} }\hspace{.5cm} 
 \subfigure[$T^{[p,q]}$]{\includegraphics[scale=.5]{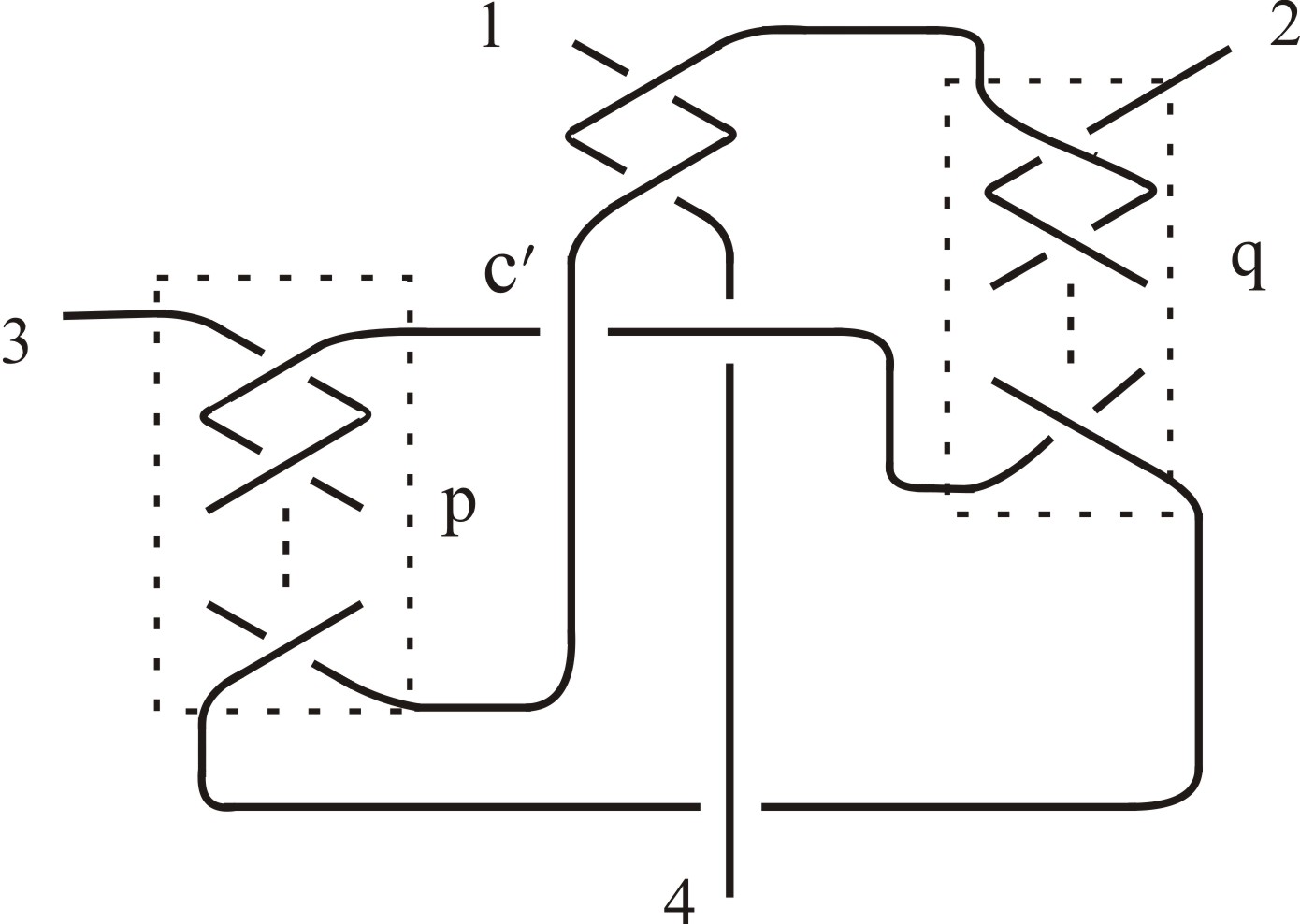}\label{Fig.t^[pq]}}
\caption{Crossing $c$ and non-alternating tangles with crossing $c'$.}\label{ntangles}}\end{figure}

 \begin{figure}[!ht] {\centering
 \subfigure[$\mathcal{G}(T_{[q]})$]{\includegraphics[scale=.5]{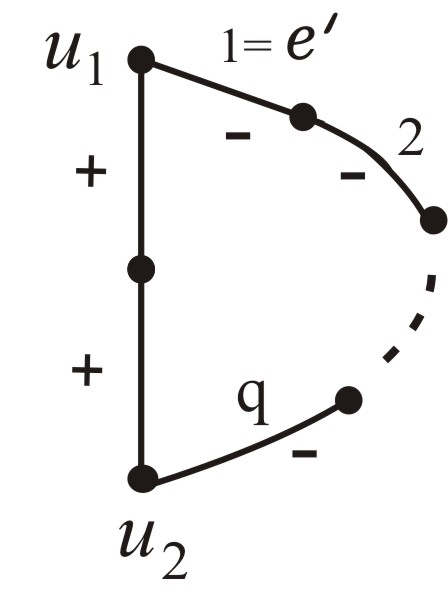} \label{Fig.gt[q]}}\hspace{.2cm}
 \subfigure[$\mathcal{G}(T^p)$] {\includegraphics[scale=.5]{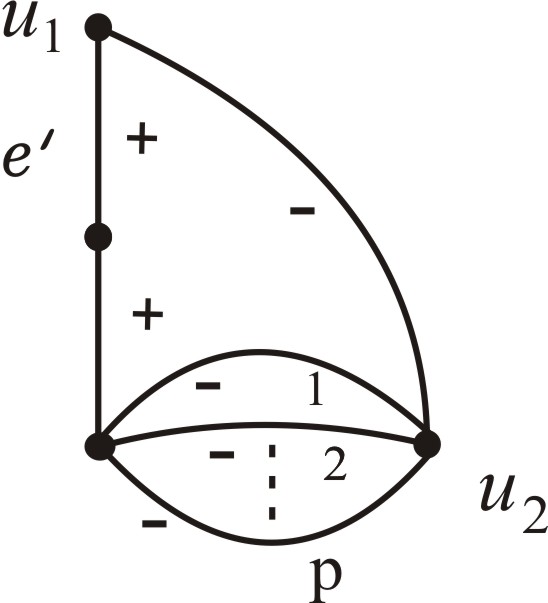}\label{Fig.gt^p}}\hspace{.2cm}
 \subfigure[$\mathcal{G}(T_q)$]{\includegraphics[scale=.5]{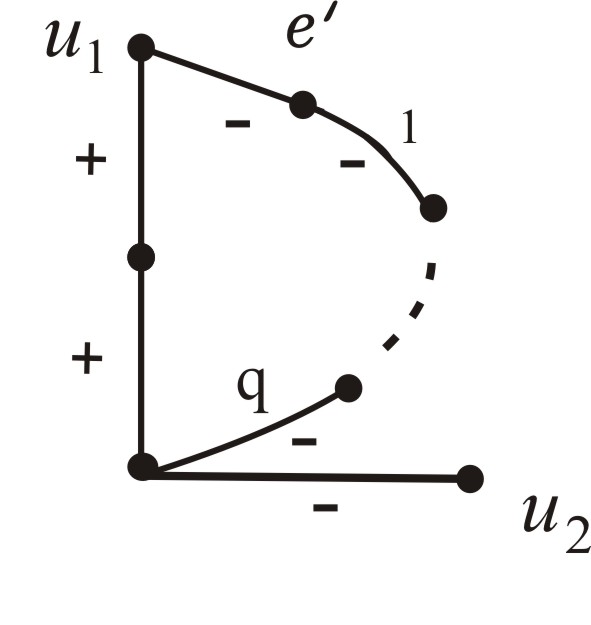}\label{Fig.gt_q}}\hspace{.2cm}
 \subfigure[$\mathcal{G}(T_{p,q}$)]{\includegraphics[scale=.5]{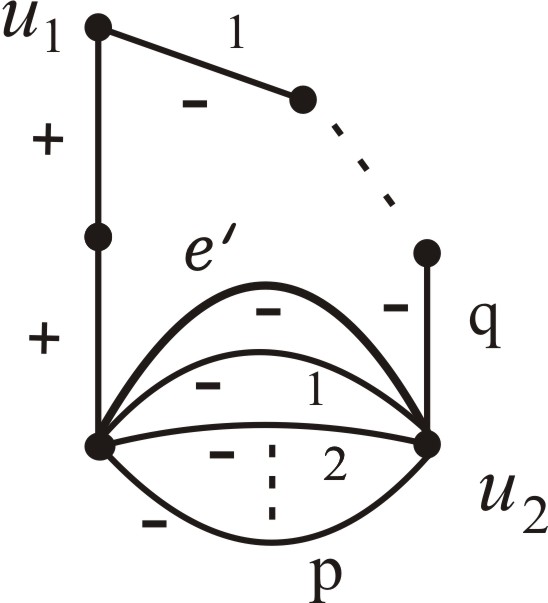}\label{Fig.gt_pq}}\\
\vspace{.4cm}
 \subfigure[$\mathcal{G}(T^{p,q})$]{\includegraphics[scale=.5]{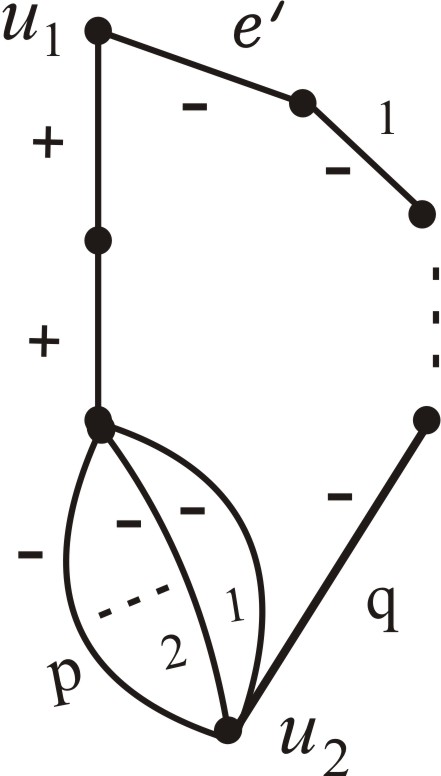}\label{Fig.gt^pq}}\hspace{.5cm} 
 \subfigure[$\mathcal{G}(T^{[p,q]}_{\infty})$]{\includegraphics[scale=.5]{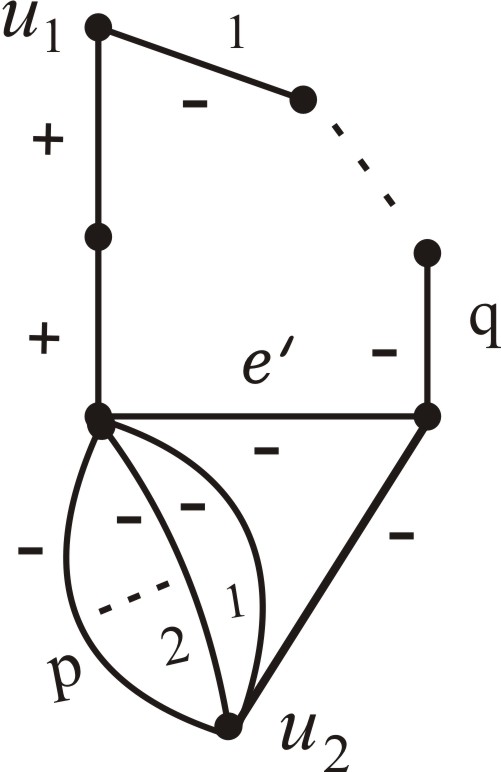} \label{Fig.gt^pq1} } 
 \hspace{.7cm}\subfigure[$\mathcal{G}(T^{[p,q]})$]{\includegraphics[scale=.5]{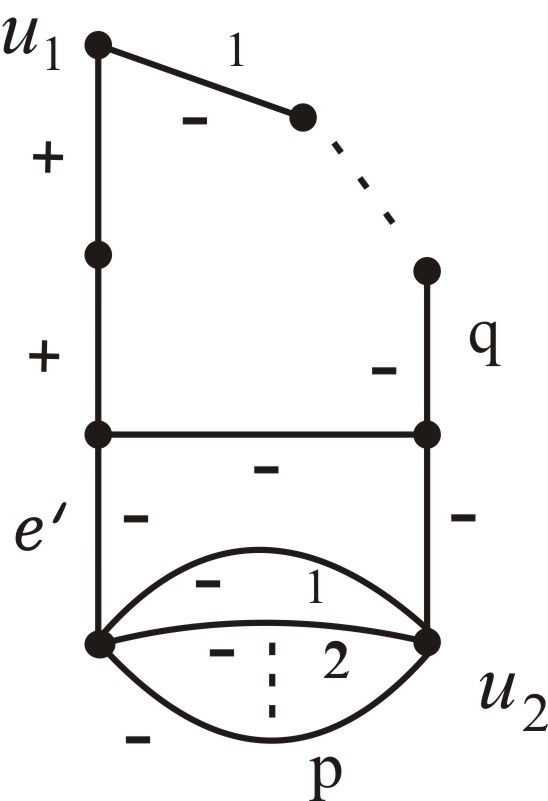}\label{Fig.gt^[pq]}}
\caption{Tait graphs $\mathcal{G}(T)$ corresponding to tangles $T\in \Omega$.}\label{tait:graph}}\end{figure}

\noindent Consider a set $\Omega$ of family of non-alternating tangles 
\[\Omega=\{T_{[q]},~T^p,~T_q, ~T^{p,q},~T_{p,q},~T^{[p,q]}_{\infty},~T^{[p,q]}: p,q\in \mathbb{N}\},\]
where $T_{[q]},~T^p,~T_q, ~T^{p,q},~T_{p,q},~T^{[p,q]}_{\infty},$ and $T^{[p,q]}$ are the tangles as shown in Fig.~\ref{ntangles}. \\
Note that the denominator closure of $T^{[1,1]}$  tangle results in the mirror image of the non-alternating link $L7n2$. If the tangle $T^{[1,1]}$ possessed an alternating tangle diagram, then this would contradict the fact that its denominator closure is non-alternating. Therefore, the tangle $T^{[1,1]}$ cannot have an alternating tangle diagram. It follows that $T^{[p,q]}$ is a non-alternating tangle, since it is derived from $T^{[1,1]}$ by twisting existing crossings. 
Moreover, the denominator closure of $T^{[1,2]}$ and $T^{[2,1]}$ tangles results in the non-alternating links,  mirror image of $L8n1$ and  knot $8_{21}$, respectively. Thus, by the same argument as for $T^{[1,1]}$, the  tangles $T^{[2,1]}$ and $T^{[1,2]}$ are also non-alternating.   
On the other hand, the tangle $T^{p}$ possesses an alternating diagram as shown in Fig.~\ref{Fig.tpalt}. With the exception of the $T^p$, all the tangles in $\Omega$  appear to lack alternating tangle diagrams. Nevertheless, we use these non-alternating tangle diagrams to prove our result in Theorem~\ref{thm:nalt}. 
For our convenience, in this section crossing $c'$ of tangle $T\in \Omega$ is the crossing depicted in Fig.~\ref{ntangles}. Additionally, we refer to a non-alternating tangle diagram simply as a non-alternating tangle.

\begin{lemma}\label{lem:nalt}
Let $L$ be a quasi-alternating link diagram with a quasi-alternating crossing $c$, and let $L'$ be a link  obtained from $L$ by replacing  crossing $c$ with a non-alternating tangle $T\in \Omega$.  Then the determinant property holds at crossing $c'$ of  $T\in \Omega$ in $L'$ under the following conditions:\\
\begin{enumerate}
\item If $T=T_{[q]}$, then the determinant property holds at  $c'$   if and only if  either $q\geq 3$  or  $q=2$ with $\det(L_0)<2\det(L_{\infty})$.\\
\item If $T=T^{p}$, then the determinant property holds at  $c'$ if and only if $\det(L_0)<\det(L_{\infty})$. \\

\item If $T=T_{q}$, then the determinant property holds at $c'$  if and only if  either $q\geq 2$  or  $q=1$ with $\det(L_0)<3\det(L_{\infty})$.\\

\item If $T=T^{p,q}$, then the determinant property holds at $c'$  if and only if either $ q\geq 2$,  or $p=q=1$, or $q=1$ with $\det(L_0)<\left(1+p/(p-1)\right)\det(L_{\infty})$.\\

\item If $T=T_{p,q}$, then the determinant property holds at  $c'$ if and only if  either  $q\geq 2$,  or $q=1$ with $\det(L_0)<2\det(L_{\infty})$.\\

\item If $T=T^{[p,q]}_{\infty}$, then determinant property holds at $c'$  if and only if  either $q\geq (2p+1)$  or  $q<(2p+1)$ with $\det(L_0)<\left(\dfrac{q(2p+1)+2}{(2p+1)-q}\right)\det(L_{\infty})$.\\

\item If $T=T^{[p,q]}$, then the determinant property holds at  $c'$ if and only if  either $q>1$,  or  $q=1$ with $\det(L_0)<(6+1/p)\det(L_{\infty})$.
 \end{enumerate}
\end{lemma}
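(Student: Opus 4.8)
The plan is to compute, for each tangle $T\in\Omega$, the determinant of $L'$ and of its two smoothings $L'_0,L'_\infty$ at the crossing $c'$, expressed in terms of the quantities $\mathfrak{a}$ and $\mathfrak{b}$ from Remark~\ref{rem:ab}, and then read off exactly when the determinant property $\det(L')=\det(L'_0)+\det(L'_\infty)$ with both summands positive can hold. The engine behind every case is the spanning-tree count of Lemma~1 (the Champanerkar--Kofman formula): replacing the crossing $c$ by a more complicated tangle $T$ rescales the contributions of spanning trees containing the edge $e$ versus those missing it, exactly as in the computations of Lemma~\ref{lemma:opptwist} and Lemma~\ref{lemma:sametwist}. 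So the first step is, for each $T$, to derive a formula of the shape $\det(L')=|\alpha\,\mathfrak{a}+\beta\,\mathfrak{b}|$ where $\alpha,\beta$ are integer polynomials in $p,q$ coming from counting spanning trees of the Tait graph $\mathcal{G}(T)$ shown in Fig.~\ref{tait:graph}, weighted by how many positive edges they carry.

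The second step is the smoothing analysis at $c'$. For each $T$ I would identify, by inspecting Fig.~\ref{ntangles}, what tangle $T_0$ and $T_\infty$ arise from the two smoothings of $c'$; these are smaller members of the same family (e.g. reducing the twisting parameter), so their determinants are already known inductively or reduce to $|\mathfrak{a}|=\det(L_0)$ and $|\mathfrak{b}|=\det(L_\infty)$. This yields $\det(L'_0)=|\alpha_0\mathfrak{a}+\beta_0\mathfrak{b}|$ and $\det(L'_\infty)=|\alpha_\infty\mathfrak{a}+\beta_\infty\mathfrak{b}|$ with $\alpha=\alpha_0+\alpha_\infty$ and $\beta=\beta_0+\beta_\infty$. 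Since $\mathfrak{a}\mathfrak{b}>0$ (both share one sign), the determinant property $|\alpha\mathfrak{a}+\beta\mathfrak{b}|=|\alpha_0\mathfrak{a}+\beta_0\mathfrak{b}|+|\alpha_\infty\mathfrak{a}+\beta_\infty\mathfrak{b}|$ holds if and only if the two inner expressions $\alpha_0\mathfrak{a}+\beta_0\mathfrak{b}$ and $\alpha_\infty\mathfrak{a}+\beta_\infty\mathfrak{b}$ carry the same sign and are both nonzero. Dividing by $|\mathfrak{b}|$ and writing $\det(L_0)/\det(L_\infty)=|\mathfrak{a}|/|\mathfrak{b}|$, each same-sign condition becomes precisely the stated inequality on $\det(L_0)$ versus a rational multiple of $\det(L_\infty)$, and the "either $q\ge k$ or..." dichotomy emerges because for large enough twisting the coefficient of $\mathfrak{b}$ dominates and the sign condition is automatic, whereas for the borderline small values of $q$ one needs the explicit ratio bound.

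I would organize the write-up as seven short cases, handling $T_{[q]},T^p,T_q$ first (single twisting parameter, so the coefficients are linear in one of $p,q$), then the two-parameter families $T^{p,q},T_{p,q}$, and finally the more involved $T^{[p,q]}_\infty$ and $T^{[p,q]}$ where the Tait graph in Fig.~\ref{tait:graph} has a more elaborate structure and the coefficient $\alpha$ acquires the factor like $q(2p+1)+2$ responsible for the threshold $q\ge 2p+1$. The routine part is the bookkeeping of spanning-tree counts in each graph; I would present one case (say $T_{[q]}$) in full detail to fix the method and then state the analogous formulas for the others, since they follow the same template.

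The main obstacle will be the correct spanning-tree enumeration for the composite tangles $T^{[p,q]}_\infty$ and $T^{[p,q]}$, where the Tait graph is neither a simple path of parallel edges nor a single cycle, so the count of trees with a prescribed number of positive edges does not factor cleanly. Getting the polynomial coefficients $\alpha,\beta$ right there—and verifying that the resulting sign condition collapses to the specific ratio $\bigl(q(2p+1)+2\bigr)/\bigl((2p+1)-q\bigr)$ and $6+1/p$—is where the real care is needed; everything else is a mechanical application of the formula $\det=|{\textstyle\sum_v}(-1)^v s_v|$ together with the identity $\alpha=\alpha_0+\alpha_\infty$, $\beta=\beta_0+\beta_\infty$ inherited from the smoothings.
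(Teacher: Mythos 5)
Your proposal follows essentially the same route as the paper: express $\det(L')$, $\det(L'_0)$, $\det(L'_\infty)$ as $|\alpha\mathfrak{a}+\beta\mathfrak{b}|$ via spanning-tree counts of the Tait graph of each $T\in\Omega$, use the additivity $\alpha=\alpha_0+\alpha_\infty$, $\beta=\beta_0+\beta_\infty$ coming from splitting the trees according to whether they contain the edge $e'$, and reduce the determinant property to the two linear forms having the same sign, which yields the stated ratio bounds on $\det(L_0)/\det(L_\infty)$. The only cosmetic difference is that the paper computes the smoothing determinants directly from counts of (almost) spanning trees containing or omitting $e'$ (the quantities $x_{i_{e'}}$, $y_{i_{e'}}$) for all seven families explicitly, rather than appealing to an induction on the twisting parameters.
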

\begin{proof}
Consider a quasi-alternating link diagram $L$ with a quasi-alternating crossing $c$ and let $e$ be the edge in $\mathcal{G}(L)$ corresponding to crossing $c$. Let $L'$ be the link obtained from $L$ by replacing $c$ with a tangle $T\in \Omega$.

\noindent Choose the checkerboard coloring of $L$ such that the edge $e(=u_1u_2)$ in $\mathcal{G}(L)$ is positive. Let $\mathcal{G}(L')$ be the Tait graph of $L'$ with the induced checkerboard coloring, and $\mathcal{G}(T)$ be the sub-graph $\mathcal{G}(L')\restr{T}$ of $\mathcal{G}(L')$.
Total number of positive edges in  $\mathcal{G}(T)$ are two, as depicted in Fig.~\ref{tait:graph}. Clearly, the total number of positive edges in both the spanning tree and almost spanning tree of  $\mathcal{G}(T)$ is either one or two.\\

\noindent For $i=1,2$, let $x_i$ be the number of spanning trees of $\mathcal{G}(T)$ with $i$ number of positive edges, and $y_i$ denote the number of almost spanning trees of $\mathcal{G}(T)$ with respect to $u_1$ and $u_2$ having $i$ number of positive edges. 
Consider the $c'$ of $L'$ that belongs to $T$, as depicted in Fig.~\ref{ntangles}. Let $e'$ be its corresponding edge in $\mathcal{G}(T)$. We define $x_{i_{e'}}$ and $y_{i_{e'}}$ as the number of spanning trees and almost spanning trees of $\mathcal{G}(T)$, respectively, that contain edge $e'$ and have $i$ number of positive edges. 
%%%%%%%%%%
 
\noindent For each spanning tree $\mathcal{T}$ of $\mathcal{G}(L)$ such that $e \in \mathcal{T}$, there exist $\displaystyle \sum_{i=1,2}x_{i_{e'}}$ number of spanning trees containing edge $e'$ and $\displaystyle \sum_{i=1,2}(x_i-x_{i_{e'}})$ number of spanning tree without edge $e'$ in $\mathcal{G}(L')$. These satisfy the following conditions:
\begin{enumerate}
\item $u_1$ and $u_2$ are connected through sub-graph $\mathcal{G}(T)$ of $\mathcal{G}(L')$, more precisely, through spanning trees of $\mathcal{G}(T)$.
\item number of positive edges in these spanning tress of $\mathcal{G}(L')$ are  $\mathsf{v}(\mathcal{T})+(i-1)$, whenever the spanning tree of $\mathcal{G}(T)$  having $i$ number of positive edges. 
\end{enumerate} 
 Therefore, corresponding to all spanning tress of $\mathcal{G}(L)$ that contain edge $e$, there exist: 
 \begin{enumerate}
\item  $\displaystyle \sum_{i=1,2}x_{i_{e'}}s_{v-i}(L_0)$ number of spanning trees in $\mathcal{G}(L')$ that include edge $e'$ with $v$ number of positive edges, and \\

\item $\displaystyle \sum_{i=1,2}(x_i-x_{i_{e'}})s_{v-i}(L_0)$ number of  spanning trees in $\mathcal{G}(L')$ that do not include edge $e'$ while still having $v$ number of positive edges.
\end{enumerate}

\noindent For each spanning tree $\mathcal{T}$ of $\mathcal{G}(L)$ such that $e \notin \mathcal{T}$, there exist $\displaystyle \sum_{i=1,2}y_{i_{e'}}$ number of spanning trees containing edge $e'$ and $\displaystyle \sum_{i=1,2}(y_i-y_{i_{e'}})$ spanning trees does not containing edge $e'$ in $\mathcal{G}(L')$. These satisfy the following conditions: 
\begin{enumerate}
\item $u_1$ and $u_2$ are not connected through sub-graph $\mathcal{G}(T)$ of $\mathcal{G}(L')$, more precisely, through almost trees of $\mathcal{G}(T)$.
\item number of positive edges in these spanning tress of $\mathcal{G}(L')$ are  $\mathsf{v}(\mathcal{T})+i$,  whenever the spanning tree of $\mathcal{G}(T)$  having $i$ number of positive edges.

\end{enumerate}

 \noindent Therefore, corresponding to all spanning tress of $\mathcal{G}(L)$ that do not contain $e$, there exist: 
 \begin{enumerate}
\item  $\displaystyle \sum_{i=1,2}y_{i_{e'}}s_{v-i}(L_{\infty})$ number of spanning trees in $\mathcal{G}(L')$ that include edge $e'$ with $v$ number of positive edges, and \\

\item $\displaystyle \sum_{i=1,2}(y_i-y_{i_{e'}})s_{v-i}(L_\infty)$ number of  spanning trees in $\mathcal{G}(L')$ that do not include edge $e'$   while still having $v$ number of positive edges.
\end{enumerate}
\noindent For any integer $v$, we have
\vspace{-.2cm}
\begin{align*}
s_{v}(L')&=\text{number of spanning trees in~} \mathcal{G}(L')  \text{~ having~} v \text{~ number of positive edges}\\
&=x_1s_{v-1}(L_0)+x_2s_{v-2}(L_0)+y_1s_{v-1}(L_\infty)+y_2s_{v-2}(L_\infty)
\end{align*}
\vspace{-.8cm}
\begin{equation}\label{eqL'}
\begin{split} 
 \det(L')&=\Big| \displaystyle \sum_{v}(-1)^{v}s_{v}(L') \Big|\\
 &=\Big| \displaystyle \sum_{v}(-1)^{v} \left\{ x_1 s_{v-1}(L_0)+x_2s_{v-2}(L_0)+y_1s_{v-1}(L_\infty)+y_2s_{v-2}(L_\infty)\right \}\Big|\\
&=\Big| \displaystyle \sum_{v}(-1)^{v} \{x_1s_{v-1}(L_0)+x_2s_{v-2}(L_0)\}+\displaystyle \sum_{v}(-1)^{v}\{y_1s_{v-1}(L_\infty)+y_2s_{v-2}(L_\infty)\}\Big|\\
&=\Big|(x_1-x_2) \displaystyle \sum_{v'}(-1)^{v'-1}s_{v'}(L_0)+(-y_1+y_2)\displaystyle \sum_{v'}(-1)^{v'}s_{v'}(L_\infty)\Big|\\
&=\Big|(x_1-x_2)\mathfrak{a}+(-y_1+y_2)\mathfrak{b}\Big|,
\end{split}
\end{equation}
where $L'_0$ and $L'_\infty$ are the links obtained from $L'$ by performing smoothings at crossing $c'$. Since $L$ is quasi-alternating at $c$  and $e$ is a positive edge, by Remark~\ref{rem:ab}

\vspace{-.5cm}
\[\displaystyle \sum_{v}(-1)^{v} s_{v-1}(L_0). \displaystyle \sum_{v}(-1)^{v}s_{v}(L_\infty)>0, \quad \text{~that is}\quad \mathfrak{a}.\mathfrak{b}>0.\]
\vspace{-.5cm}

\noindent If the edge $e'$ in $\mathcal{G}(T)$ is negative, then the determinate of $L'_0$ and $L'_{\infty}$ are given  by the following expressions:
\vspace{-.7cm} 

\begin{align*}
s_{v}(L'_{0})&=\text{number of spanning trees in~} \mathcal{G}(L') \text{~ without egde~} e' \text{~ having~} v \text{~ number of positive edges}\\
&=(x_1-x_{1_{e'}})s_{v-1}(L_0)+(x_2-x_{2_{e'}})s_{v-2}(L_{0})+(y_1-y_{1_{e'}})s_{v-1}(L_\infty)+(y_2-y_{2_{e'}})s_{v-2}(L_\infty)
\end{align*}
\begin{equation}\label{eqL'1}
\begin{split}
 \det(L'_0)&=\Big| \displaystyle \sum_{v}(-1)^{v}s_{v}(L'_0)\Big|\\
&=\Big|\displaystyle \sum_{v}(-1)^{v} \{(x_1-x_{1_{e'}})s_{v-1}(L_0)+(x_2-x_{2_{e'}})s_{v-2}(L_{0})\\
&\hspace{.2cm}+(y_1-y_{1_{e'}})s_{v-1}(L_\infty)+(y_2-y_{2_{e'}})s_{v-2}(L_\infty)\} \Big| \\
&=\Big|\displaystyle \sum_{v}(-1)^{v}\{(x_1-x_{1_{e'}})s_{v-1}(L_0)+(x_2-x_{2_{e'}})s_{v-2}(L_{0})\}\\
&\hspace{.2cm}+\displaystyle \sum_{v}(-1)^{v}\{(y_1-y_{1_{e'}})s_{v-1}(L_\infty)+(y_2-y_{2_{e'}})s_{v-2}(L_\infty)\}\Big| \\
&=\Big|\{(x_1+x_{1_{e'}})-(x_2-x_{2_{e'}})\}\displaystyle \sum_{v'}(-1)^{v'-1}s_{v'}(L_0)+\{-(y_1-y_{1_{e'}})+(y_2-y_{2_{e'}})\} \displaystyle \sum_{v'}(-1)^{v'}s_{v'}(L_\infty)\Big| \\
&=\Big|\{(x_1-x_{1_{e'}})-(x_2-x_{2_{e'}})\}\mathfrak{a}+\{(y_2-y_{2_{e'}})-(y_1-y_{1_{e'}})\}\mathfrak{b}\Big| \\
\end{split}
\end{equation}

\begin{equation}\label{eqL'2}
\begin{split}
s_{v}(L'_{\infty})&= \text{number of spanning trees in~} \mathcal{G}(L') \text{~ with egde~} e' \text{~ having~} v \text{~ number of positive edges}\\
&=x_{1_{e'}}s_{v-1}(L_0)+x_{2_{e'}}s_{v-2}(L_{0})+y_{1_{e'}}s_{v-1}(L_\infty)+y_{2_{e'}}s_{v-2}(L_\infty)\\
\det(L'_{\infty})&=\Big| \displaystyle \sum_{v}(-1)^{v}s_{v}(L'_{\infty})\Big|\\
&=\Big|\displaystyle \sum_{v}(-1)^{v}\{ x_{1_{e'}}s_{v-1}(L_0)+x_{2_{e'}}s_{v-2}(L_{0})+y_{1_{e'}}s_{v-1}(L_\infty)+y_{2_{e'}}s_{v-2}(L_\infty)\} \Big| \\
&=\Big|\displaystyle \sum_{v}(-1)^{v}\{x_{1_{e'}}s_{v-1}(L_0)+x_{2_{e'}}s_{v-2}(L_{0})\}+\displaystyle \sum_{v}(-1)^{v}\{y_{1_{e'}}s_{v-1}(L_\infty)+y_{2_{e'}}s_{v-2}(L_\infty))\}\Big| \\
&=\Big|(x_{1_{e'}}-x_{2_{e'}})\displaystyle \sum_{v'}(-1)^{v'-1}s_{v'}(L_0)+(-y_{1_{e'}}+y_{2_{e'}}) \displaystyle \sum_{v'}(-1)^{v'}s_{v'}(L_\infty)\Big| \\
&=\Big|(x_{1_{e'}}-x_{2_{e'}})\mathfrak{a}+(y_{2_{e'}}-y_{1_{e'}})\mathfrak{b}\Big| \\
\end{split}
\end{equation}
\vspace{.1cm}

\noindent If the edge $e'$ in $\mathcal{G}(T)$ is positive, then the determinate of $L'_0$ and $L'_{\infty}$ are given  by the following expressions:
\begin{align*}
s_{v-1}(L'_{0})&=\text{number of spanning trees in~} \mathcal{G}(L') \text{~ with egde~} e' \text{~ having~} v \text{~ number of positive edges}\\
&=x_{1_{e'}}s_{v-1}(L_0)+x_{2_{e'}}s_{v-2}(L_{0})+y_{1_{e'}}s_{v-1}(L_\infty)+y_{2_{e'}}s_{v-2}(L_\infty)
\end{align*}

\vspace{-.5cm}
\begin{equation}\label{eqL'1+}
\begin{split}
 \det(L'_0)&=\Big| \displaystyle \sum_{v}(-1)^{v}s_{v}(L'_0)\Big|\\
&= \Big|\displaystyle \sum_{v}(-1)^{v}\{ x_{1_{e'}}s_{v}(L_0)+x_{2_{e'}}s_{v-1}(L_{0})+y_{1_{e'}}s_{v}(L_\infty)+y_{2_{e'}}s_{v-1}(L_\infty)\} \Big| \\
&=\Big|\displaystyle \sum_{v}(-1)^{v}\{x_{1_{e'}}s_{v}(L_0)+x_{2_{e'}}s_{v-1}(L_{0})\}+\displaystyle \sum_{v}(-1)^{v}\{y_{1_{e'}}s_{v}(L_\infty)+y_{2_{e'}}s_{v-1}(L_\infty)\}\Big| \\
&=\Big|(x_{1_{e'}}-x_{2_{e'}})\displaystyle \sum_{v'}(-1)^{v'}s_{v'}(L_0)+(y_{1_{e'}}-y_{2_{e'}}) \displaystyle \sum_{v'}(-1)^{v'}s_{v'}(L_\infty)\Big| \\
&=\Big|(x_{1_{e'}}-x_{2_{e'}})(-\mathfrak{a})+(y_{1_{e'}}-y_{2_{e'}})\mathfrak{b}\Big| =\Big|(x_{1_{e'}}-x_{2_{e'}})\mathfrak{a}+(y_{2_{e'}}-y_{1_{e'}})\mathfrak{b}\Big|\\
\end{split}
\end{equation}

\begin{align*}
s_{v}(L'_{\infty})&=\text{number of spanning trees in~} \mathcal{G}(L') \text{~ without egde~} e' \text{~ having~} v \text{~ number of positive edges}\\
&=(x_1-x_{1_{e'}})s_{v-1}(L_0)+(x_2-x_{2_{e'}})s_{v-2}(L_{0})+(y_1-y_{1_{e'}})s_{v-1}(L_\infty)+(y_2-y_{2_{e'}})s_{v-2}(L_\infty)
\end{align*}

\noindent Using Eq.~(\ref{eqL'1}), we have
\begin{equation}\label{eqL'2+}
\begin{split}
 \det(L'_{\infty})&=\Big| \displaystyle \sum_{v}(-1)^{v}s_{v}(L'_{\infty})\Big|\\
&=\Big|\{(x_1-x_{1_{e'}})-(x_2-x_{2_{e'}})\}\mathfrak{a}+\{(y_2-y_{2_{e'}})-(y_1-y_{1_{e'}})\}\mathfrak{b}\Big| \\
\end{split}
\end{equation}

\noindent {\underline{Case(1)}} Suppose that $T=T_{[q]}$, as shown in Fig.~\ref{Fig.t[q]}. Then the edge $e'$ in the Tait sub-graph $\mathcal{G}(T_{[q]})$ is negative, as depicted in Fig.~\ref{tait:graph}(a), and 
 \[x_1=2, ~x_2=q, ~x_{1_{e'}}=2, ~x_{2_{e'}}=q-1,\]
  \[y_1=2q, ~y_2=0, ~y_{1_{e'}}=2q-2,~ y_{2_{e'}}=0.\]
Using Eq.~(\ref{eqL'}), (\ref{eqL'1}), and (\ref{eqL'2}), we have
\[\det(L'_0)=|\mathfrak{a}+2\mathfrak{b}|, \quad \det(L'_{\infty})=|(q-3)\mathfrak{a}+2(q-1)\mathfrak{b}|, \text{~and}\]
\[\det(L')=|(q-2)\mathfrak{a}+2q\mathfrak{b}|=|(\mathfrak{a}+2\mathfrak{b})+\{(q-3)\mathfrak{a}+2(q-1)\mathfrak{b}\}|\]
It is evident that $\det(L')=\det(L'_0)+\det(L'_{\infty})$ holds if and only if 
\begin{equation}\label{eqt^{p}}
(\mathfrak{a}+2\mathfrak{b})\{(q-3)\mathfrak{a}+2(q-1)\mathfrak{b})\}>0.
\end{equation}
Clearly, Eq.~(\ref{eqt^{p}}) holds for all $q\geq 3$. 
If $q=2$, then Eq.~(\ref{eqt^{p}}) holds if and only if for $\mathfrak{a},\mathfrak{b}\gtrless0$, $(\mathfrak{a}+2\mathfrak{b})\gtrless 0$ and
\[(q-3)\mathfrak{a}+2(q-1)\mathfrak{b}=-\mathfrak{a}+2\mathfrak{b}\gtrless 0\quad \iff \quad 2\mathfrak{b} \gtrless \mathfrak{a} \quad \iff \quad |\mathfrak{a}|<2|\mathfrak{b}|.\]

\noindent {\underline{Case(2)}} Suppose that $T=T^{p}$, as shown in Fig.~\ref{Fig.t^p}. Then the edge $e'$ in the Tait sub-graph $\mathcal{G}(T^p)$ is positive, as depicted in Fig.~\ref{tait:graph}(b), and 
 \[x_1=2p,~x_2=p+1,~ x_{1_{e'}}=p,~ x_{2_{e'}}=p+1,\]
  \[y_1=2p,~y_2=1, ~y_{1_{e'}}=p, ~y_{2_{e'}}=1\]
Using Eq.~(\ref{eqL'}), (\ref{eqL'1+}), and (\ref{eqL'2+}), we have
\[\det(L'_0)=|\mathfrak{a}+(p-1)\mathfrak{b}|, \quad \det(L'_{\infty})=|p\mathfrak{a}-p\mathfrak{b}|, \text{~and}\]
\[\det(L')=|(p-1)\mathfrak{a}+(1-2p)\mathfrak{b}|=|(p\mathfrak{a}-p\mathfrak{b})+(-\mathfrak{a}-(p-1)\mathfrak{b})|.\]
It is evident that $\det(L')=\det(L'_0)+\det(L'_{\infty})$ holds if and only if 
\begin{equation}\label{eqt^p}
(p\mathfrak{a}-p\mathfrak{b})\{-\mathfrak{a}-(p-1)\mathfrak{b}\}>0.\end{equation}
For $a,b\gtrless0$, $(-\mathfrak{a}-(p-1)\mathfrak{b})\lessgtr 0$ and Eq.~(\ref{eqt^p}) holds if and only if  
\[p\mathfrak{a}-p\mathfrak{b}\lessgtr 0\quad \iff \quad \mathfrak{a} \lessgtr \mathfrak{b} \iff  |\mathfrak{a}|<|\mathfrak{b}|.\]

\noindent {\underline{Case(3)} } Suppose that $T=T_{q}$, as shown in Fig.~\ref{Fig.t_q}. Then the edge $e'$ in the Tait sub-graph $\mathcal{G}(T_q)$ is negative, as depicted in Fig.~\ref{tait:graph}(c), and 
 \[x_1=2, ~x_2=q+1, ~ x_{1_{e'}}=2, ~ x_{2_{e'}}=q\]
   \[y_1=2q+4, ~y_2=q+1, ~y_{1_{e'}}=2q+2, ~y_{2_{e'}}=q\]
Using Eq.~(\ref{eqL'}), (\ref{eqL'1}), and (\ref{eqL'2}), we have
\[\det(L'_0)=|a+b|, \quad \det(L'_{\infty})=|(q-2)a+(q+2)b|, \text{~and}\]
\[\det(L')=|(q-1)\mathfrak{a}+(q+3)\mathfrak{b}|=|(\mathfrak{a}+\mathfrak{b})+\{(q-2)\mathfrak{a}+(q+2)\mathfrak{b}\}|\] 
It is evident that $\det(L')=\det(L'_0)+\det(L'_{\infty})$ holds if and only if 
\begin{equation}\label{eqt_q}
(\mathfrak{a}+\mathfrak{b})\{(q-2)\mathfrak{a}+(q+2)\mathfrak{b})\}>0.\end{equation}
Clearly, Eq.~(\ref{eqt_q}) holds for all $q\geq 2$. For $q=1$, Eq.~(\ref{eqt_q}) becomes
\[(\mathfrak{a}+\mathfrak{b})(-\mathfrak{a}+3\mathfrak{b})> 0,\]
and it holds if and only if 
\[-\mathfrak{a}+3\mathfrak{b}\gtrless 0,\text{~whenever~} \mathfrak{a}\mathfrak{b}\gtrless 0 ,\text{~that is~} \iff  |\mathfrak{a}|<3|\mathfrak{b}|.\]

\noindent {\underline{Case(4)}} Suppose that $T=T^{p,q}$, as shown in Fig.~\ref{Fig.t^pq}. Then the edge $e'$ in the Tait sub-graph $\mathcal{G}(T^{p,q})$ is negative, as depicted in Fig.~\ref{tait:graph}(e), and 
 \[x_1=2p, ~x_2=pq+p+1, ~x_{1_{e'}}=2p, ~x_{2_{e'}}=pq+1\]
   \[y_1=2pq+2p, ~y_2=q+1, ~ y_{1_{e'}}=2pq, ~y_{2_{e'}}=q\]
Using Eq.~(\ref{eqL'}), (\ref{eqL'1}), and (\ref{eqL'2}), we have
\[\det(L'_0)=|p\mathfrak{a}+(2p-1)\mathfrak{b}|, \quad \det(L'_{\infty})=|(-2p+pq+1)\mathfrak{a}+(2pq-q)\mathfrak{b}|, \text{~and}\]
\begin{align*}
\det(L')&=|(pq-p+1)\mathfrak{a}+(2pq+2p-q-1)\mathfrak{b}|\\
&=|\{p\mathfrak{a}+(2p-1)\mathfrak{b}\}+\{(-2p+pq+1)\mathfrak{a}+(2pq-q)\mathfrak{b}\}|
\end{align*} 
It is evident that $\det(L')=\det(L'_0)+\det(L'_{\infty})$ holds if and only if 
\begin{equation}\label{eqt^pq}
\{p\mathfrak{a}+(2p-1)\mathfrak{b}\}\{(-2p+pq+1)\mathfrak{a}+(2pq-q)\mathfrak{b}\}>0.
\end{equation}
Clearly, Eq.~(\ref{eqt^pq}) holds for all $p\geq 1$, $q\geq 2$ and $p=q=1$. In case $q=1$ and $p>1$, Eq.~(\ref{eqt^pq}) becomes
\begin{equation}\label{eqt^pq:n}
(p\mathfrak{a}+(2p-1)\mathfrak{b})\{(1-p)\mathfrak{a}+(2p-1)\mathfrak{b}\}\gtrless 0.\end{equation}
 Moreover, for $\mathfrak{a},\mathfrak{b}\gtrless0$, $\mathfrak{a}+(2p-1)\mathfrak{b}\gtrless 0$ and  Eq.~(\ref{eqt^pq:n}) holds if and only if
 \[(1-p)\mathfrak{a}+(2p-1)\mathfrak{b}\gtrless 0,\quad \text{that is}\quad |\mathfrak{a}|<\left(1+\dfrac{p}{p-1}\right)|\mathfrak{b}|.\]

\noindent {\underline{Case(5)}} Suppose that $T=T_{p,q}$, as shown in Fig.~\ref{Fig.t_pq}. Then the edge $e'$ in the Tait sub-graph $\mathcal{G}(T_{p,q})$ is negative, as depicted in Fig.~\ref{tait:graph}(d), and 
 \[x_1=2p+2, ~x_2=pq+q, ~ x_{1_{e'}}=2, ~ x_{2_{e'}}=q\]
   \[y_1=2pq+2q+2, ~y_2=q, ~ y_{1_{e'}}=2q, ~ y_{2_{e'}}=0\]
Using Eq.~(\ref{eqL'}), (\ref{eqL'1}), and (\ref{eqL'2}), we have
\[\det(L'_0)=|(pq-2p)\mathfrak{a}+(2pq-q+2)\mathfrak{b}|, \quad \det(L'_{\infty})=|(q-2)\mathfrak{a}+2q\mathfrak{b}|, \text{~and}\]
\begin{align*}
\det(L')&=|(-2p+q+pq-2)\mathfrak{a}+(2pq+q+2)\mathfrak{b}|\\
&=|\{(q-2)\mathfrak{a}+2q\mathfrak{b}\}+\{(pq-2p)\mathfrak{a}+(2pq-q+2)\mathfrak{b}\}|
\end{align*} 
It is evident that $\det(L')=\det(L'_0)+\det(L'_{\infty})$ holds if and only if 
\begin{equation}\label{eqt_pq}
\{(q-2)\mathfrak{a}+2q\mathfrak{b}\}\{(pq-2p)\mathfrak{a}+(2pq-q+2)\mathfrak{b}\}>0.
\end{equation}
Clearly, Eq.~(\ref{eqt_pq}) holds for all $p\geq 1$ and $q\geq 2$. In case, when $q=1$ and $p\geq 1$, Eq.~(\ref{eqt_pq})
 becomes
\begin{equation}\label{eqt_pq:n}
(-\mathfrak{a}+2\mathfrak{b})\{-p\mathfrak{a}+(2p+1)\mathfrak{b}\}>0.
\end{equation} 
It is evident that for $\mathfrak{a},\mathfrak{b}\gtrless0$, Eq.~(\ref{eqt_pq:n}) holds if and only if both
\[(-\mathfrak{a}+2\mathfrak{b}) \gtrless0 \quad \text{~and~} \quad -p\mathfrak{a}+(2p+1)\mathfrak{b}\gtrless0.\] 
\[\text{~That is, ~} \quad \mathfrak{b} \gtrless -\mathfrak{a} \quad  \text{~and~}\quad \dfrac{(2p+1)}{p}\mathfrak{b}\gtrless  \mathfrak{a}.\] 
It is easy to observe that $2\mathfrak{b} \gtrless \mathfrak{a}$ implies $\dfrac{(2p+1)}{p}\mathfrak{b}\gtrless  \mathfrak{a}.$ Hence, Eq.~(\ref{eqt_pq:n}) holds for any $p\geq 1$ if and only if $|a|<2|b|.$ \\

\noindent {\underline{Case(6)}} Suppose that $T=T^{[p,q]}_{\infty}$, as shown in Fig.~\ref{Fig.t^pq1}. Then the edge $e'$ in the Tait sub-graph $\mathcal{G}(T^{[p,q]}_{\infty})$ is negative, as depicted in Fig.~\ref{tait:graph}(f), and 
 \[x_1=4p+2,~x_2=pq+p+q+2,~ x_{1_{e'}}=2p+2,~ x_{2_{e'}}=q+1\]
   \[y_1=4pq+2p+2q+2,~y_2=2q+1, ~y_{1_{e'}}=2pq+2q+2,~ y_{2_{e'}}=q\]
Using Eq.~(\ref{eqL'}), (\ref{eqL'1}), and (\ref{eqL'2}), we have
\[\det(L'_0)=|(pq-p+1)\mathfrak{a}+(2pq+2p-q-1)\mathfrak{b}| =|(p(q-1)+1)\mathfrak{a}+(2p-1)(q+1)\mathfrak{b}|,\]
\[ \det(L'_{\infty})=|(-2p+q-1)\mathfrak{a}+(2pq+q+2)\mathfrak{b}|=|(q-(2p+1))\mathfrak{a}+(2pq+q+2)\mathfrak{b}|, \text{~and}\]
\begin{align*}
\det(L')&=|(pq-3p+q)\mathfrak{a}+(4pq+2p+1)\mathfrak{b}|\\
&=|\{(pq-p+1)\mathfrak{a}+(2pq+2p-q-1)\mathfrak{b}\}+\{(-2p+q-1)\mathfrak{a}+(2pq+q+2)\mathfrak{b}\}|\\
&=|\{(p(q-1)+1)\mathfrak{a}+(2p-1)(q+1)\mathfrak{b}\}+\{(q-(2p+1))\mathfrak{a}+(2pq+q+2)\mathfrak{b}\}|
\end{align*} 
It is evident that $\det(L')=\det(L'_0)+\det(L'_{\infty})$ holds if and only if 
\begin{equation}\label{eqt'}
\{(p(q-1)+1)\mathfrak{a}+(2p-1)(q+1)\mathfrak{b}\}\{(q-(2p+1))\mathfrak{a}+(2pq+q+2)\mathfrak{b}\}>0.
\end{equation}
Clearly, Eq.~(\ref{eqt'}) holds for all $p\geq 1$ and $q\geq (2p+1)$. 
For $q<(2p+1)$ and $p\geq1$,
\[(p(q-1)+1)\mathfrak{a}+(2p-1)(q+1)\mathfrak{b}\gtrless 0, \text{~whenever~} a,b\gtrless0.\]
Therefore, Eq.~(\ref{eqt'}) holds in this case if and only if for $\mathfrak{a}\mathfrak{b}\gtrless0$,
\[(q-(2p+1)\mathfrak{a}+(2pq+q+2)\mathfrak{b}\gtrless 0,\quad \text{that is} \quad |\mathfrak{a}|<\left(\dfrac{q(2p+1)+2}{(2p+1)-q}\right)|\mathfrak{b}|.\]

\noindent {\underline{Case(7)}} Suppose that $T=T^{[p,q]}$ and $p, q\geq 1$ be positive integers, as shown in Fig.~\ref{Fig.t^[pq]}. Then the edge $e'$ in the Tait sub-graph $\mathcal{G}(T^{[p,q]})$ is negative, as depicted in Fig.~\ref{tait:graph}(g), and 
 \[x_1=6p+2, ~x_2=3pq+2p+q+1, x_{1_{e'}}=4p+2,~ x_{2_{e'}}=2pq+p+q+1\]
   \[y_1=6pq+4p+2q+2,~y_2=2pq+p+2q+1,~ y_{1_{e'}}=4pq+2p+2q+2, ~y_{2_{e'}}=2q+1\]
Using Eq.~(\ref{eqL'}), (\ref{eqL'1}), and (\ref{eqL'2}), we have
\[\det(L'_0)=|(p(q-1)+1)\mathfrak{a}+p\mathfrak{b}|, \quad \det(L'_{\infty})=|(2pq-3p+q-1)\mathfrak{a}+(4pq+2p+1)\mathfrak{b}|, \text{~and}\]
\begin{align*}
\det(L')&=|(3pq-4p+q-1)\mathfrak{a}+(4pq+3p+1)\mathfrak{b}|\\
&=|\{(pq-p)\mathfrak{a}+p\mathfrak{b}\}+\{(2pq-3p+q-1)\mathfrak{a}+(4pq+2p+1)\mathfrak{b}\}|
\end{align*} 
It is evident that $\det(L')=\det(L'_0)+\det(L'_{\infty})$ holds if and only if 
\begin{equation}\label{eqtf}
\{(pq-p)\mathfrak{a}+p\mathfrak{b}\}\{(2pq-3p+q-1)\mathfrak{a}+(4pq+2p+1)\mathfrak{b}\}>0.
\end{equation}
Clearly, Eq.~(\ref{eqtf}) holds for all $p\geq 1$ and $q>1$. For $p\geq 1$ and $q=1$, Eq.~(\ref{eqtf}) becomes
\[p\mathfrak{b}\{(-p)\mathfrak{a}+(6p+1)\}>0,\]
and it holds if and only if, for $\mathfrak{a}\mathfrak{b}\gtrless 0$
\[(-p)\mathfrak{a}+(6p+1)\mathfrak{b} \gtrless 0 \iff \dfrac{(6p+1)}{p}\mathfrak{b} \gtrless  \mathfrak{a} \iff |\mathfrak{a}|<\dfrac{(6p+1)}{p}|\mathfrak{b}|.\]

\noindent Hence the proof of the lemma.
\end{proof}
\begin{figure}[!ht] {\centering
 \subfigure[$T^{[p,q]}_{0}$]{\includegraphics[scale=.4]{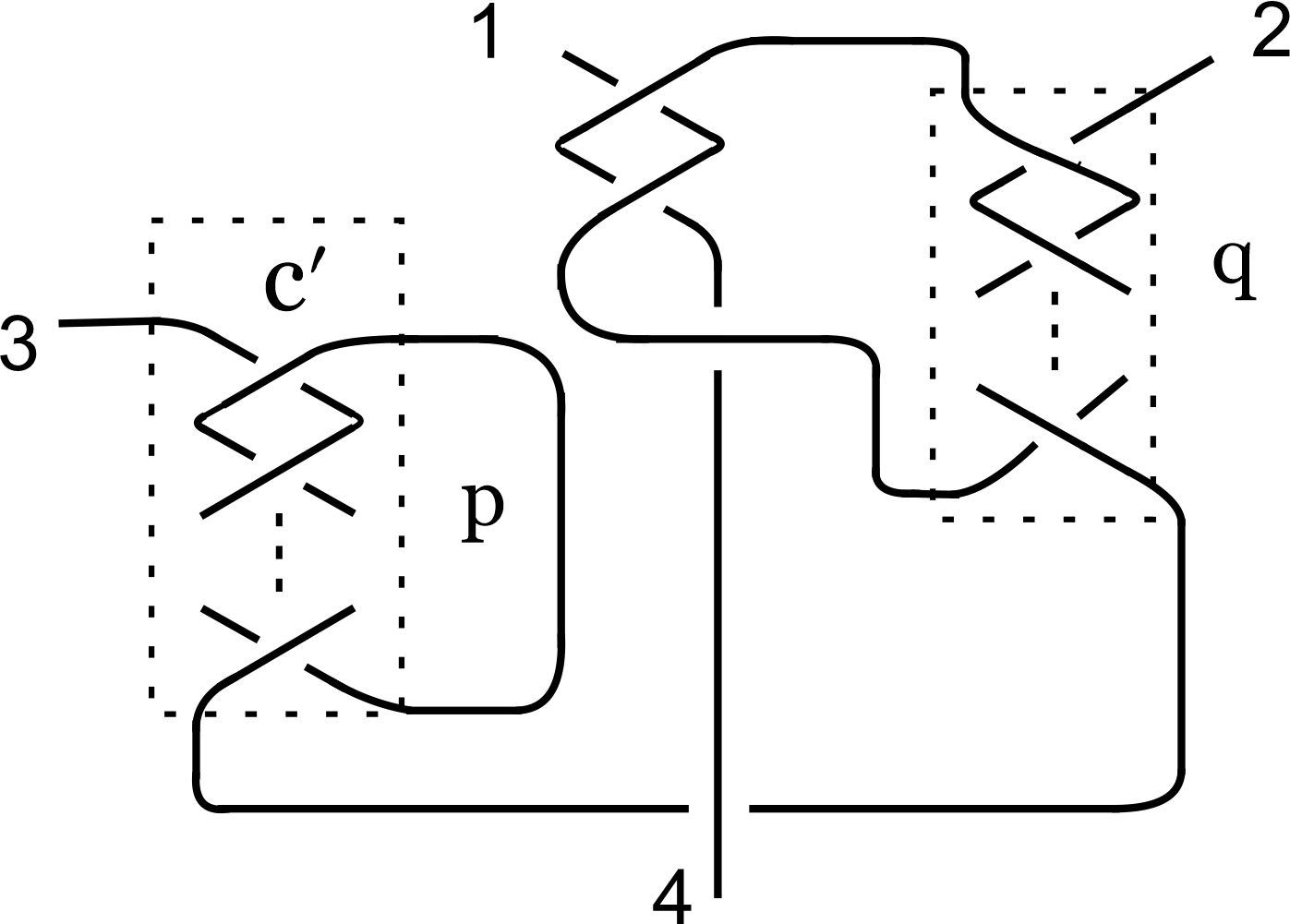} \label{Fig.t^pq1} }\hspace{1.5cm}\subfigure[$T'$]{\includegraphics[scale=.58]{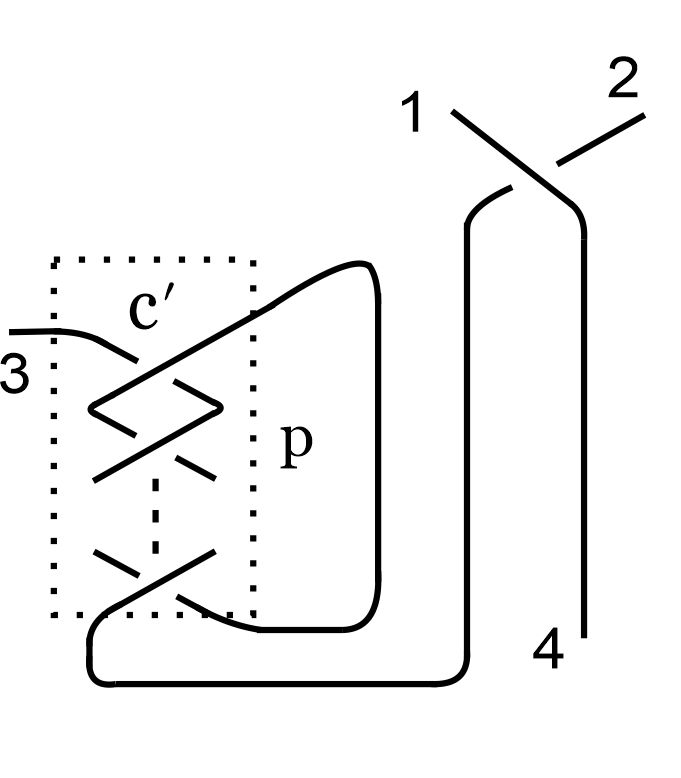}\label{Fig.t'}}
\caption{ Tangles with crossing $c'$.}\label{ntangles2}}\end{figure}

\begin{proposition}\label{prop:mth1}
Let $L'$ be a link obtained from a quasi-alternating link diagram $L$ by replacing a quasi-alternating crossing $c$ with tangle $T_{0}^{[p,q]}$, as shown in Fig.~\ref{ntangles2}(a). Then $L'$ is a quasi-alternating link.
\end{proposition}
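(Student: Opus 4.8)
The plan is to exhibit the marked crossing $c'$ of the inserted tangle $T^{[p,q]}_{0}$ (Fig.~\ref{ntangles2}(a)) as a quasi-alternating crossing of $L'$, and to run the argument as an induction on the twist parameter $q$ for fixed $p$. By the recursive definition of $\mathcal{Q}$, it suffices to check that both smoothings $L'_{0}$ and $L'_{\infty}$ of $L'$ at $c'$ lie in $\mathcal{Q}$ and that the determinant property $\det(L')=\det(L'_{0})+\det(L'_{\infty})$, with $\det(L'_{0}),\det(L'_{\infty})\geq 1$, holds at $c'$. For the base case $q=1$ I would argue that $T^{[p,1]}_{0}$ is a reduced alternating tangle extending $c$; indeed, inserting it should yield $\det(L')=|\mathfrak{a}+p\mathfrak{b}|$, matching the value $\det(L^{p})$ computed in Eq.~(\ref{eq2:twsame}), so that $L'$ is quasi-alternating directly by Theorem~\ref{Thm:pre:m}.

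For the inductive step I would first identify the two smoothings of $T^{[p,q]}_{0}$ at $c'$ geometrically. One smoothing should reduce the twist count and return the tangle $T^{[p,q-1]}_{0}$, so that the corresponding smoothing of $L'$ is the link obtained from $L$ by inserting $T^{[p,q-1]}_{0}$, which is quasi-alternating by the induction hypothesis. The other smoothing should collapse $T^{[p,q]}_{0}$ to the tangle $T'$ of Fig.~\ref{ntangles2}(b); here I would show that $T'$ is a reduced alternating tangle extending $c$ (or, if not, that it is one of the members of $\Omega$ already handled), so that the associated link is quasi-alternating by Theorem~\ref{Thm:pre:m} or the relevant case of Lemma~\ref{lem:nalt}. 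This closes the verification that both $L'_{0}$ and $L'_{\infty}$ lie in $\mathcal{Q}$.

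Second, I would establish the determinant property at $c'$ using the spanning-tree bookkeeping developed in the proof of Lemma~\ref{lem:nalt}. Concretely, I would count the spanning trees and almost spanning trees of the Tait sub-graph $\mathcal{G}(T^{[p,q]}_{0})$ by their number of positive edges, obtaining the integers $x_{1},x_{2},y_{1},y_{2}$ together with the refined counts $x_{i_{e'}},y_{i_{e'}}$ for the edge $e'$ corresponding to $c'$. Substituting these into Eq.~(\ref{eqL'}), (\ref{eqL'1}) and (\ref{eqL'2}) (or their positive-edge analogues Eq.~(\ref{eqL'1+}) and (\ref{eqL'2+}), depending on the sign of $e'$) expresses $\det(L')$, $\det(L'_{0})$ and $\det(L'_{\infty})$ as explicit integral linear forms in $\mathfrak{a}$ and $\mathfrak{b}$. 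Since $\mathfrak{a}\mathfrak{b}>0$ by Remark~\ref{rem:ab}, the additivity reduces to checking that the linear forms defining $\det(L'_{0})$ and $\det(L'_{\infty})$ have coefficients of a single common sign, so that no cancellation occurs. Guided by Case~(7) of Lemma~\ref{lem:nalt}, where $\det(L'_{0})=|(p(q-1)+1)\mathfrak{a}+p\mathfrak{b}|$ already has strictly positive coefficients for all $p,q\geq 1$, I expect the forms attached to $T^{[p,q]}_{0}$ to be unconditionally positive, yielding the conclusion for every $p$ and $q$ without any auxiliary inequality between $\det(L_{0})$ and $\det(L_{\infty})$.

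The main obstacle will be the interplay of the two smoothings with the sign analysis of these linear forms. Verifying that one smoothing of $T^{[p,q]}_{0}$ genuinely returns $T^{[p,q-1]}_{0}$, so that the induction closes, and that the other yields an alternating tangle extending $c$ requires careful diagrammatic tracking, possibly using detour moves; and confirming that the coefficients of $\mathfrak{a}$ and $\mathfrak{b}$ in $\det(L'_{0})$ and $\det(L'_{\infty})$ never force a cancellation is precisely what upgrades the statement from ``the determinant property may hold'' to ``$L'$ is quasi-alternating for all $p,q$.'' This unconditional positivity, which distinguishes the present proposition from the conditional cases of Lemma~\ref{lem:nalt}, is the heart of the argument and the step I would treat with the most care.
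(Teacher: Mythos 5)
Your overall shape (exhibit the marked crossing $c'$ as quasi-alternating by checking both smoothings and the determinant identity) is the right one, but the geometric input you rely on is not what the figure actually shows, and this breaks the induction. The tangle $T^{[p,q]}_{0}$ is not ``$q$ twists that you can peel off at $c'$'': inserting it into $L$ produces a \emph{connected sum} of $L^{-(q-1)}$ (the link obtained from $L$ by $q-1$ horizontal half twists of the \emph{same} type at $c$) with a $(2,p)$-torus link, and the marked crossing $c'$ sits in the torus-link summand. Consequently the two smoothings at $c'$ change $p$, not $q$: neither of them returns $T^{[p,q-1]}_{0}$, and neither collapses to the tangle $T'$ of Fig.~\ref{ntangles2}(b). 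Your fallback for the second smoothing is also not available as stated: $T'$ is not an alternating tangle extending $c$ (it contains an opposite-type vertical half twist), and the link obtained by inserting it is only shown to be quasi-alternating under property (I) (Proposition~\ref{prop:mth2}), an hypothesis Proposition~\ref{prop:mth1} does not make. So the inductive step as you describe it cannot close, and the base-case claim that $T^{[p,1]}_{0}$ is a reduced alternating tangle extending $c$ is likewise unjustified once the connected-sum structure is taken into account.

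The missing idea is precisely that connected-sum decomposition, which is what makes the statement unconditional. With $L'\cong L^{-(q-1)}\,\#\,T(2,p)$ one gets $\det(L')=p\,\det(L^{-(q-1)})$; for $p=1$ the link is $L^{-(q-1)}$ itself, quasi-alternating by Theorem~\ref{Thm:pre:m}; for $p=2$ both smoothings at $c'$ are $L^{-(q-1)}$, so the determinant identity is the trivial $2d=d+d$ and $c'$ is quasi-alternating; and general $p$ follows by replacing $c'$ with $p-1$ same-type vertical half twists, again by Theorem~\ref{Thm:pre:m}. This replaces your spanning-tree sign analysis entirely -- no positivity of linear forms in $\mathfrak{a},\mathfrak{b}$ needs to be checked -- and it is why no inequality between $\det(L_0)$ and $\det(L_\infty)$, nor property (I), is needed.
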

\begin{proof} Consider a quasi-alternating link diagram $L$ with a quasi-alternating crossing $c$. Let $L'$ be a link  obtained from  $L$ by replacing crossing $c$ with tangle $ T_{0}^{[p,q]}$, and $c'$ be a crossings of $T$ in $L'$, as illustrated in Fig.~\ref{ntangles2}(a). \\
Then $L'$ be equivalent to a link diagram that represents a connected sum of $L^{-(q-1)}$ and $(2,p)$-torus link, as depicted in Fig.~\ref{L'_0}. It is evident that $\det(L')=p\det(L^{-(q-1)})$.
For $p=1$, $L'$ is equivalent to the link $L^{-(q-1)}$, which is quasi-alternating by Theorem~\ref{Thm:pre:m}.
\noindent Now, consider the case where $p=2$. By performing smoothings at crossing $c'$ in $L'$, it is evident that both $L'_{0}$ and $L'_{\infty}$ are equivalent to $L^{-(q-1)}$. Since $L^{-(q-1)}$ is quasi-alternating by Theorem~\ref{Thm:pre:m}, both $L'_{0}$ and $L'_{\infty}$  are quasi-alternating. Moreover, in this case
  \[\det(L')=2\det(L^{-(q-1)})=\det(L^{-(q-1)})+\det(L^{-(q-1)})=\det(L'_{0})+\det(L'{\infty}).\]
  Hence, $c'$ is a quasi-alternating crossing.  By replacing $c'$ with $p-1$ vertical half twists of same type, the resulting link remains quasi-alternating by Theorem~\ref{Thm:pre:m}. Thus, the desired result is achieved.
\end{proof}

 \begin{figure}[!ht] {\centering
 \includegraphics[scale=.55]{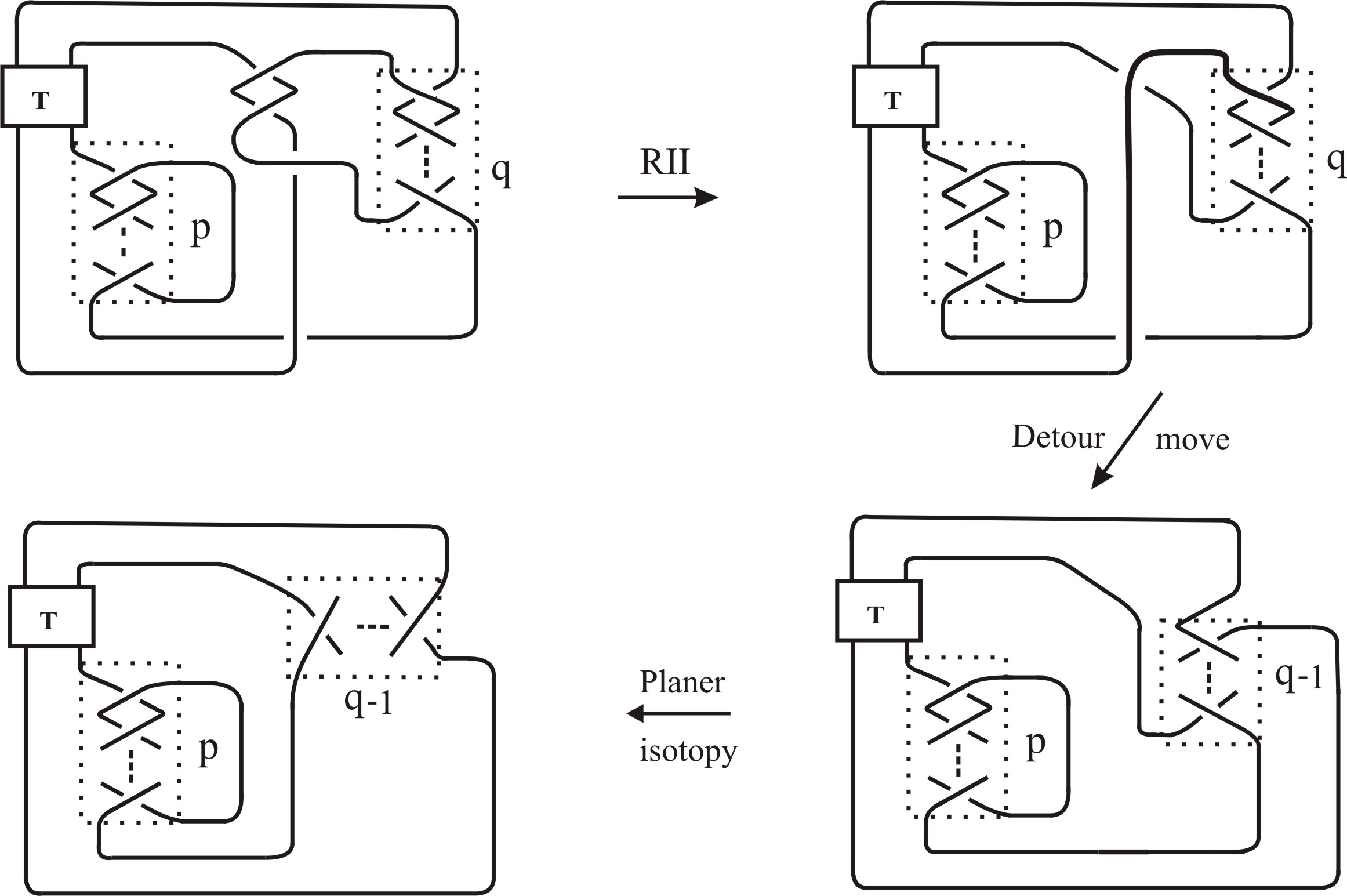} 
 \caption{ }\label{L'_0}}
 \end{figure}

\begin{proposition}\label{prop:mth2}
Let $L'$ be a link obtained from a quasi-alternating link diagram $L$ by replacing a quasi-alternating crossing $c$ with tangle  $T'$, as shown in Fig.~\ref{ntangles2}(b). Then, whenever crossing $c$ satisfies property (I), $L'$ is a quasi-alternating link.
\end{proposition}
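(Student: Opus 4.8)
The plan is to show that $L'$ is quasi-alternating by verifying that the distinguished crossing $c'$ of $T'$ (Fig.~\ref{ntangles2}(b)) is a quasi-alternating crossing of $L'$; once this is established, $L'\in\mathcal{Q}$ follows directly from the recursive definition. To this end I would first perform the two smoothings of $L'$ at $c'$ and identify the resulting diagrams $L'_0$ and $L'_\infty$ as links obtained from $L$ by simpler modifications at $c$. Following the pattern of Proposition~\ref{prop:mth1}, an isotopy of $T'$ should present $L'$ in a recognizable form (a connected sum, or a twisted family) built from $L$ together with the link $L_s$ obtained by switching the crossing $c$. This is exactly where the hypothesis enters: property (I) guarantees that $L_s$ is quasi-alternating, supplying the base case that was unnecessary in Proposition~\ref{prop:mth1}, where the relevant smoothing reduced instead to a torus summand.

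Next I would argue that both smoothings $L'_0$ and $L'_\infty$ are quasi-alternating. The smoothing that reduces to a $T^{[p,q]}_0$-type replacement is quasi-alternating by Proposition~\ref{prop:mth1}; the smoothing that reduces to a same-type twisting $L^{-(q-1)}$ (or the analogous vertical family) is quasi-alternating by Theorem~\ref{Thm:pre:m}; and the smoothing that reduces to the switched diagram $L_s$ is quasi-alternating by property (I). As in Proposition~\ref{prop:mth1}, I expect to settle the small cases $p=1$ and $p=2$ directly, and then obtain every larger value of $p$ by replacing $c'$ with $p-1$ half twists of the same type, each step preserving quasi-alternation by Theorem~\ref{Thm:pre:m}.

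Finally I would check the determinant condition $\det(L')=\det(L'_0)+\det(L'_\infty)$ with $\det(L'_0),\det(L'_\infty)\ge 1$. Expressing each determinant through the spanning-tree count of Remark~\ref{rem:ab} in terms of $\mathfrak{a}$ and $\mathfrak{b}$, and using that $\mathfrak{a}\mathfrak{b}>0$ because $c$ is quasi-alternating in $L$, this reduces to verifying that the two linear forms in $\mathfrak{a},\mathfrak{b}$ representing $\det(L'_0)$ and $\det(L'_\infty)$ carry the same sign; the relevant inequality is the one already recorded in Lemma~\ref{lem:nalt} for the corresponding tangle, so the computation can be quoted rather than repeated. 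I expect the main obstacle to be the first step: correctly carrying out the isotopy that exhibits $L'$ in a form where one smoothing at $c'$ is genuinely the switched link $L_s$, so that property (I) may legitimately be invoked. The delicate secondary point is to confirm that the sign bookkeeping of $\mathfrak{a}$ and $\mathfrak{b}$ is consistent across the two smoothings, so that one obtains the additive identity $\det(L')=\det(L'_0)+\det(L'_\infty)$ rather than a cancelling difference.
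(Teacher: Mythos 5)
Your overall strategy coincides with the paper's: exhibit $L'$ as a connected sum of the switched-crossing link $L^{\overline{1}}$ with a $(2,p)$-torus link, use property (I) exactly to make that summand quasi-alternating, settle $p=1$ and $p=2$ by hand, and reach general $p$ by replacing $c'$ with $p-1$ half twists of the same type and invoking Theorem~\ref{Thm:pre:m}. Two of your steps, however, would not go through as written. First, the identification of the smoothings is off: for $p=2$ there are only two smoothings at $c'$, and \emph{both} are equivalent to $L^{\overline{1}}$. Your list of three candidate reductions (a $T^{[p,q]}_{0}$-type replacement, a same-type twisting $L^{-(q-1)}$, and the switched diagram) imports the case analysis of Proposition~\ref{prop:mth1} into a setting where it does not occur; the tangle $T'$ of Fig.~\ref{ntangles2}(b) carries no parameter $q$, and neither smoothing is of $T^{[p,q]}_{0}$-type.

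Second, and more substantively, the determinant condition cannot be ``quoted from Lemma~\ref{lem:nalt}'': that lemma covers exactly the seven families in $\Omega$, and $T'$ is not among them, so there is no ``corresponding tangle'' whose inequality you can cite. No spanning-tree bookkeeping in $\mathfrak{a}$ and $\mathfrak{b}$ is needed here at all. Since $L'$ is a connected sum with a $(2,p)$-torus link, $\det(L')=p\det(L^{\overline{1}})$, and for $p=2$ the fact that both smoothings at $c'$ are $L^{\overline{1}}$ gives $\det(L')=2\det(L^{\overline{1}})=\det(L'_0)+\det(L'_\infty)$ at once, with both summands at least $1$ because $L^{\overline{1}}$ is quasi-alternating by property (I). With these two repairs your outline becomes the paper's proof.
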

\begin{proof} Consider a quasi-alternating link diagram $L$ with a quasi-alternating crossing $c$ that satisfies property (I). Let $L'$ be a link  obtained from  $L$ by replacing crossing $c$ with tangle $T'$, and $c'$ be the crossings of $T$ in $L'$, as illustrated in Fig.~\ref{ntangles2}(b). 
Then $L'$ be equivalent to a link diagram that represents a connected sum of $L^{\overline{1}}$ and $(2,p)$-torus link.
It is evident that $\det(L')=p\det(L^{\overline{1}})$. 

\noindent For $p=1$, $L'$ is equivalent to the link $L^{\overline{1}}$, which is quasi-alternating since property (I) holds at crossing $c$ in $L$.
 Now, consider the case where $p=2$. Then by performing smoothings at crossing $c'$ in $L'$, we find  that both $L'_{0}$ and $L'_{\infty}$ are equivalent to $L^{\overline{1}}$. Since property (I) holds at $c$, $L^{\overline{1}}$ is quasi-alternating. Hence, both $L'_{0}$ and $L'_{\infty}$  are quasi-alternating. Moreover, in this case
  \[\det(L')=2\det(L^{'})=\det(L^{'})+\det(L^{'})=\det(L'_{0})+\det(L'_{\infty}).\]
  Hence, $c'$ is a quasi-alternating crossing.  By replacing $c'$ with $p-1$ vertical half twists of same type, the resulting link remains quasi-alternating by Theorem~\ref{Thm:pre:m}. Thus, the desired result is achieved.
\end{proof}

\begin{theorem}\label{thm:nalt} 
Let $L$ be a quasi-alternating link diagram with a quasi-alternating crossing $c$ that satisfies property (II). Then a link $L'$ obtained from $L$ by replacing crossing $c$ with a non-alternating tangle $T\in\Omega$ is quasi-alternating.  
\end{theorem}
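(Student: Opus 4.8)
The plan is to show that the distinguished crossing $c'$ of the tangle $T$ is itself a quasi-alternating crossing of $L'$; once $L'$ is exhibited to have a quasi-alternating crossing, it lies in $\mathcal{Q}$ by definition. Verifying that $c'$ is quasi-alternating amounts to two things: that the determinant property holds at $c'$, and that both of its smoothings $L'_0$ and $L'_\infty$ are themselves quasi-alternating.

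For the determinant property I would invoke Lemma~\ref{lem:nalt} directly. The structural point is that property (II) supplies precisely the inequality $\det(L_0)<\det(L_\infty)$, and that in each of the seven cases of Lemma~\ref{lem:nalt} the condition for the determinant property to hold at $c'$ is either unconditional on the parameters, or else takes the form $\det(L_0)<\lambda\,\det(L_\infty)$ for a coefficient $\lambda\geq 1$. Indeed, reading off the seven cases, one has $\lambda=1$ for $T^p$ (where the condition is exactly property (II)), and $\lambda=2,\,3,\,1+\tfrac{p}{p-1},\,\tfrac{q(2p+1)+2}{(2p+1)-q},\,6+\tfrac1p$ in the remaining threshold cases, each of which is easily checked to be at least $1$. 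Hence $\det(L_0)<\det(L_\infty)\leq\lambda\,\det(L_\infty)$ forces the determinant property in every case. This is exactly why property (II), rather than any finer hypothesis, is the correct assumption.

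For the two smoothings I would argue family by family and set up an induction on the tangle parameters $p,q$. In each case I identify the links $L'_0$ and $L'_\infty$ obtained by smoothing $c'$: one smoothing collapses the tangle to a member of $\Omega$ with a strictly smaller parameter, to which the induction hypothesis applies (property (II) is a condition on the pair $(L,c)$ alone and is therefore unaffected), while the other collapses it to a tangle already known to produce a quasi-alternating link. The base cases are furnished by the earlier results: a twist region of the same type by Theorem~\ref{Thm:pre:m}; a twist region of opposite type by Lemma~\ref{lemma:opptwist}, whose hypotheses (property (I) together with $\det(L_0)<\det(L_\infty)$) are precisely property (II); and the two remaining base configurations, the connected-sum tangles of Fig.~\ref{ntangles2}, by Propositions~\ref{prop:mth1} and~\ref{prop:mth2}. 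Since each smoothing is thereby shown to be quasi-alternating, both have determinant at least $1$, so the remaining clause of the determinant property is automatic, and the identity $\det(L')=\det(L'_0)+\det(L'_\infty)$ recorded in the proof of Lemma~\ref{lem:nalt} confirms that these identifications of the smoothings are internally consistent.

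The main obstacle is the bookkeeping in this last step: one must correctly read off, for each of the seven families, which simpler tangle each of $L'_0$ and $L'_\infty$ reduces to, and check that these reductions feed coherently into a terminating induction bottoming out in the base cases above. The determinant data from Lemma~\ref{lem:nalt} serves as a useful consistency check — for instance, for $T_{[q]}$ one reads $\det(L'_0)=|\mathfrak{a}+2\mathfrak{b}|$ and $\det(L'_\infty)=|(q-3)\mathfrak{a}+2(q-1)\mathfrak{b}|$, identifying $L'_0$ with two vertical half-twists of the same type (Theorem~\ref{Thm:pre:m}) and $L'_\infty$ with the $T_{[q-1]}$-replacement (induction). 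Carrying this identification through the two-parameter families $T^{p,q}$, $T_{p,q}$, $T^{[p,q]}_\infty$ and $T^{[p,q]}$, and in each of them choosing the smoothing direction that decreases the appropriate parameter while routing the other smoothing to Proposition~\ref{prop:mth1} or~\ref{prop:mth2}, is where the genuine care lies.
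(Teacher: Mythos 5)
Your overall strategy is the one the paper uses: invoke Lemma~\ref{lem:nalt} for the determinant property at the distinguished crossing $c'$, observe that property (II) supplies $\det(L_0)<\det(L_\infty)$ and hence every threshold of the form $\det(L_0)<\lambda\det(L_\infty)$ with $\lambda\geq 1$, and then identify the two smoothings $L'_0$, $L'_\infty$ family by family with links already known to be quasi-alternating via Theorem~\ref{Thm:pre:m}, Lemma~\ref{lemma:opptwist}, and Propositions~\ref{prop:mth1} and~\ref{prop:mth2}. Whether one then inducts downward on the parameters via smoothings, as you propose, or establishes the minimal parameter value and builds upward by adding same-type twists at $c'$ (which is how the paper phrases it), is an inessential difference.

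There is, however, one concrete step that fails as you have stated it: the claim that the determinant property holds at $c'$ ``in every case.'' Lemma~\ref{lem:nalt}(1) only addresses $T_{[q]}$ for $q\geq 2$; for $T_{[1]}$ the determinant property at $c'$ is genuinely false. From the formulas in that case one gets $\det(L')=|-\mathfrak{a}+2\mathfrak{b}|$ while $\det(L'_0)+\det(L'_\infty)=|\mathfrak{a}+2\mathfrak{b}|+2|\mathfrak{a}|$, and since $\mathfrak{a}\mathfrak{b}>0$ the first is strictly smaller, so $c'$ is not a quasi-alternating crossing of $L'$ and your plan of certifying $L'\in\mathcal{Q}$ through $c'$ cannot work there. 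The fix (which the paper supplies) is to notice that the $T_{[1]}$-replacement is isotopic, via a Reidemeister~II simplification, to the replacement of $c$ by two vertical half twists of opposite type, so that $L'$ is quasi-alternating directly by Lemma~\ref{lemma:opptwist} under property (II), with no appeal to $c'$ at all. You need this separate argument for the base of your induction in the $T_{[q]}$ family (and consequently wherever that family is fed into the other cases); with it added, the rest of your bookkeeping goes through as in the paper.
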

\begin{proof}
Consider a quasi-alternating link diagram $L$ with a quasi-alternating crossing $c$ that satisfies property (II). Construct a link $L'$ from $L$ by replacing crossing $c$ to a non-alternating tangle $T\in \Omega$. Let $c'$ be a crossing of $T\in \Omega$  in $L'$ as illustrated in Fig.~\ref{ntangles}. Denote $L'_0$ and $L'_{\infty}$ be the links obtained from $L'$ by performing  smoothings at crossing $c'$, as shown in Fig.~\ref{1}. Our aim is to prove the crossing $c'$ to  be a quasi-alternating crossing. Since property (II) holds at crossing $c$ in $L$,  property (I) satisfied at $c$ and $\det(L_0)<\det(L_{\infty})$. By applying Lemma~\ref{lem:nalt}, it is evident that the  determinant property holds at crossing $c'$ of $L'$ in all cases, except $q=1$ in case $T_{[q]}$.    Remaining proof of the theorem will be divided into following parts.\\
\begin{figure}[!ht] {\centering
 \subfigure[Tangle $T_{[1]}$]{\fbox{ \parbox{.4\textwidth}{\includegraphics[scale=.5]{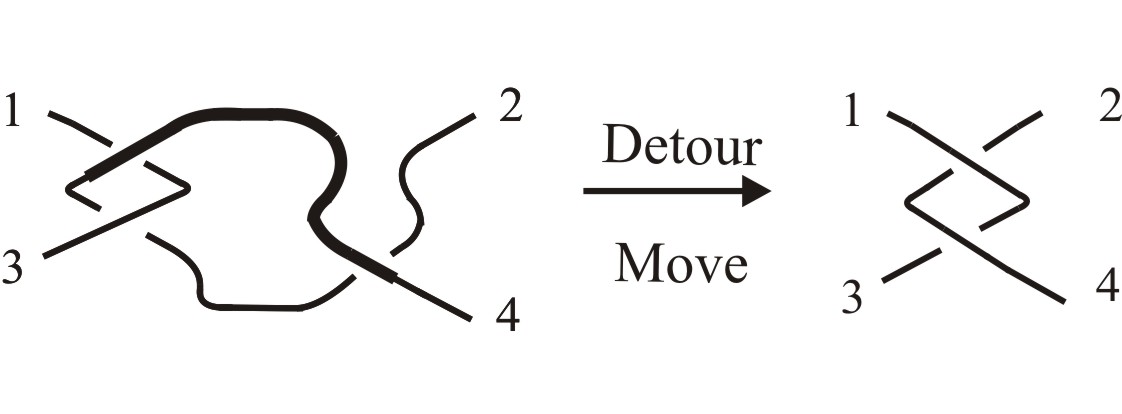} }}}\hspace{.2cm}
 \subfigure[Tangle $T_{0}$]{\fbox{ \parbox{.45\textwidth}{ \includegraphics[width=.42\textwidth, height=.14\textwidth]{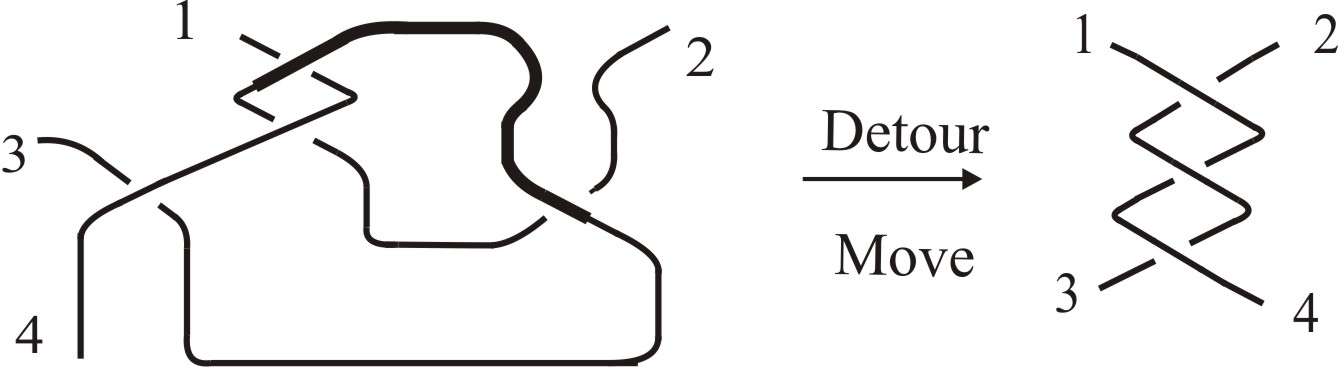} }}}
 \caption{}\label{mth0}}
\end{figure}

\noindent (1) Suppose that $T=T_{[q]}$, as illustrated in Fig.~\ref{Fig.t[q]}. For $q=1$, $L'$ is equivalent to a link that is obtained from $L$ by replacing crossing $c$ with two vertical half twists of opposite type, as shown in Fig.~\ref{mth0}(a).  Since Property (I) holds at crossing $c$ in $L$ and $\det(L_0)<\det(L_{\infty})$, the link $L'$ is quasi-alternating in this case by Lemma~\ref{lemma:opptwist}.\\
Now, consider the case when $q=2$. Then the links $L'_{0}$ and $L'_{\infty}$ are equivalent to the links $L^{2}$ and $L^{\overline{2}}$, respectively, see Fig.~\ref{mth1}. Again by using the Property (I) of the crossing $c$ and $\det(L_0)<\det(L_{\infty})$,  $L^{\overline{2}}$ is quasi-alternating  by Lemma~\ref{lemma:opptwist}. Moreover, $L^{2}$ is quasi-alternating by Theorem~\ref{Thm:pre:m}. Since the determinant property holds at $c'$ in $L'$,   $c'$ is a quasi-alternating crossing of $L'$.

\noindent Further, replacing crossing $c'$ by $q-1$  vertical half twists of same type,  the resulting link diagram is quasi-alternating at every new crossing by Theorem~\ref{Thm:pre:m}.  Hence, $c'$ is a quasi-alternating crossing of $L'$ for any arbitrary $q$ and $L'$ is a quasi-alternating link. 

\begin{figure}[!ht] {\centering
 \includegraphics[scale=.7]{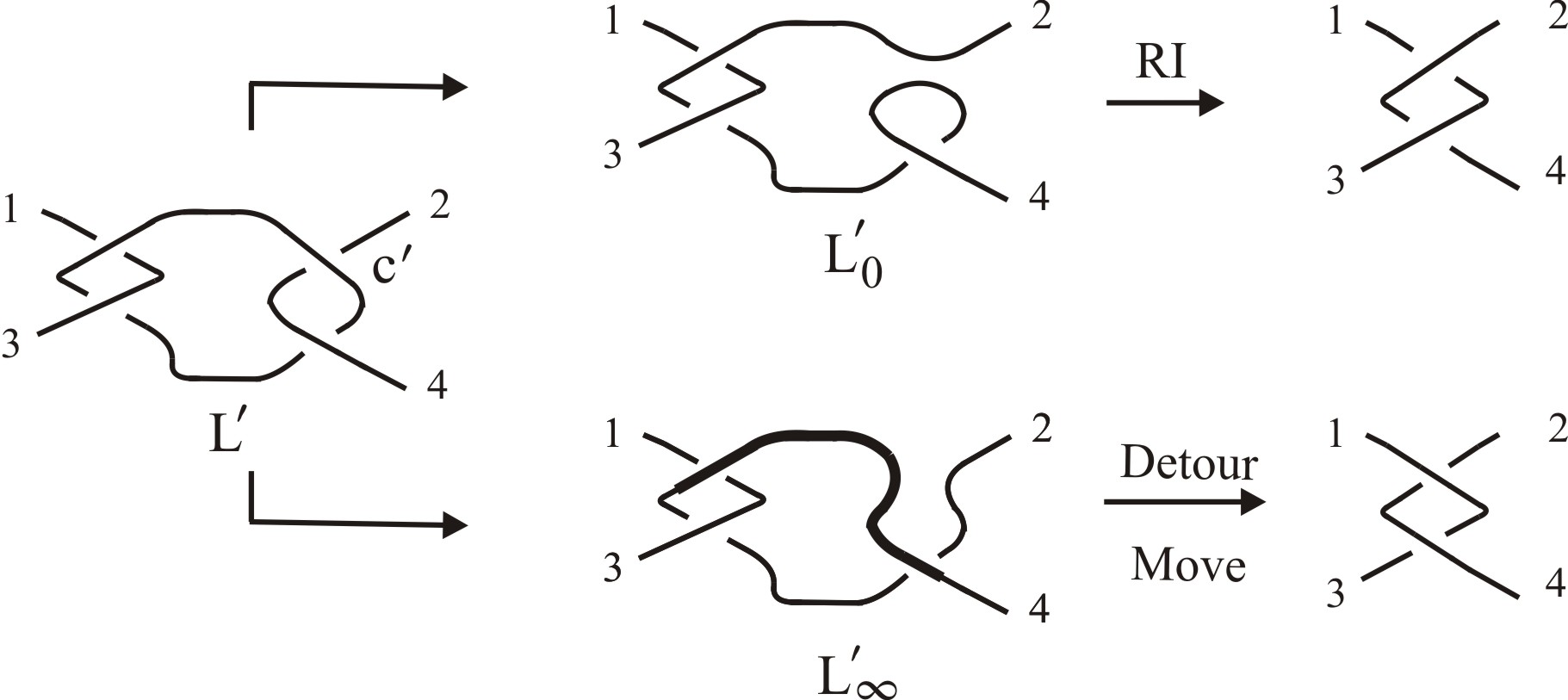} 
 \caption{Links $L'$, $L'_{0}$ and $L'_{\infty}$, when $T=T_{[2]}$. }\label{mth1}}
\end{figure}

\noindent (2) Suppose that $T=T_{q}$, as illustrated in Fig.~\ref{Fig.t_q}. Take $q=1$. Then the links $L'_{0}$ and $L'_{\infty}$ are equivalent to the links $L^{3}$ and $L^{\overline{3}}$, respectively, see Fig.~\ref{mth2}. Since Property (I) holds at crossing $c$ in $L$ and $\det(L_0)<\det(L_{\infty})$, $L^{\overline{3}}$ is quasi-alternating by Lemma~\ref{lemma:opptwist}. Moreover, $L^{3}$ is quasi-alternating by Theorem~\ref{Thm:pre:m}. Since the determinant property holds at $c'$, $c'$ is a quasi-alternating crossing of $L'$.

\noindent If crossing $c'$ is replaced by $q$  vertical half twists of same type, then the resulting link diagram is quasi-alternating at every new crossing by Theorem~\ref{Thm:pre:m}.  Hence, $c'$ is a quasi-alternating crossing of $L'$ for any $q$ and $L'$ is a quasi-alternating link. \\

\begin{figure}[!ht] {\centering
 \includegraphics[scale=.63]{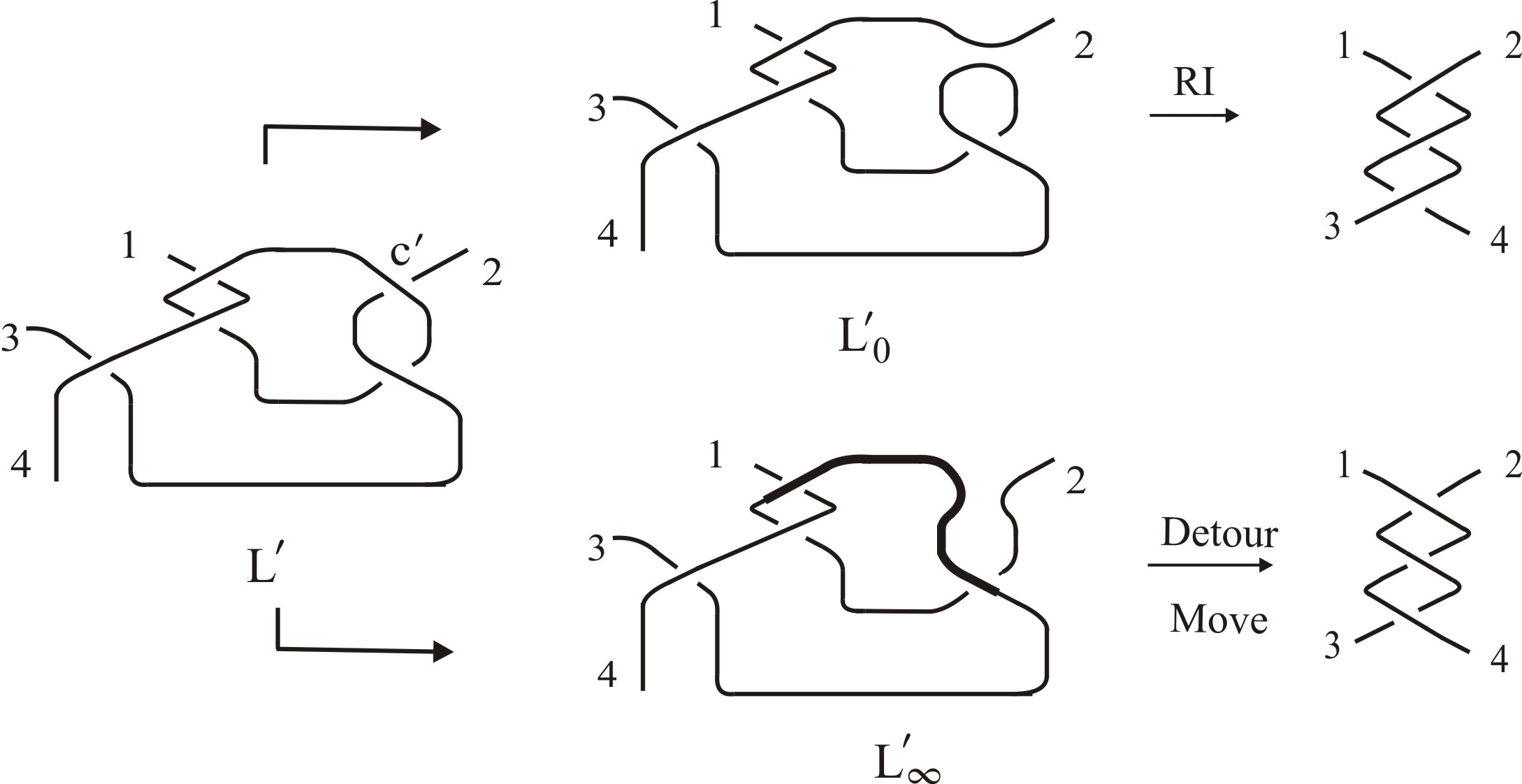} 
 \caption{Links $L'$, $L'_{0}$ and $L'_{\infty}$, when $T=T_{2}$. }\label{mth2}}
\end{figure}

\noindent (3) Suppose that $T=T^{[p]}$, as illustrated in Fig.~\ref{Fig.t^p}.   In this case, the link $L'_{0}$ is equivalent to the link $L^{p-1}$, which is quasi-alternating by Theorem~\ref{Thm:pre:m}. The link $L'_{\infty}$ is equivalent to the link which is connected sum of  $L^{\overline{1}}$ and $(2,p)$-torus link as shown in Fig.~\ref{mth3}. By Proposition~\ref{prop:mth2}, $L'_{\infty}$ is a quasi-alternating link. Since the determinant property holds at $c'$,  $c'$ is a quasi-alternating crossing of $L'$ for any $q$. Hence, $L'$ is a quasi-alternating link for any arbitrary $p$. \\

\begin{figure}[!ht] {\centering
 \includegraphics[scale=.6]{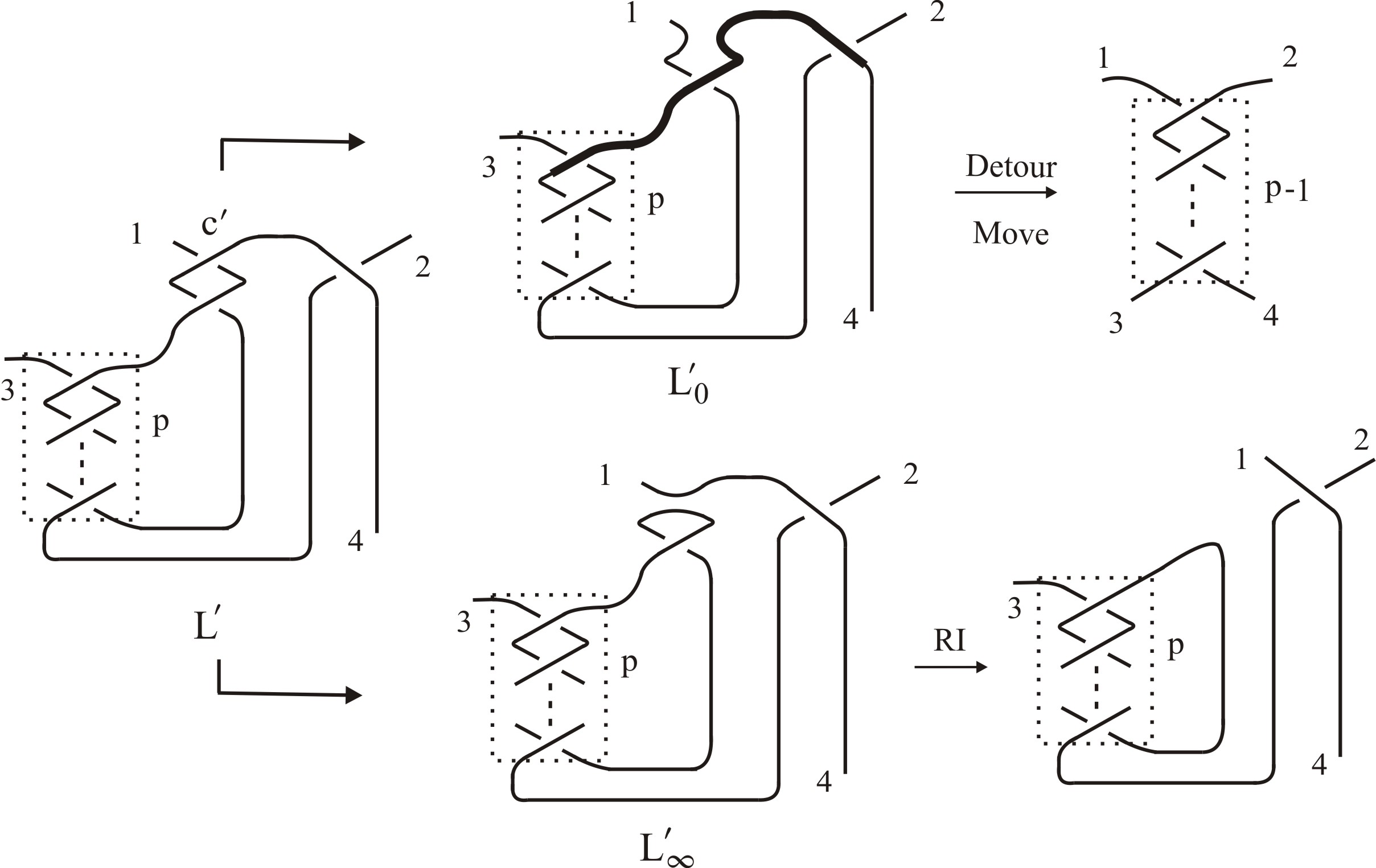} 
 \caption{Links $L'$, $L'_{0}$ and $L'_{\infty}$, when $T=T^{[p]}$. }\label{mth3}}
\end{figure}

 \noindent (4) Suppose that $T=T^{p,q}$, as illustrated in Fig.~\ref{Fig.t^pq}. Take $q=1$. Then the link $L'_{0}$ is equivalent to a link  which is obtained from $L$ by replacing $c$ with an alternating tangle of same type, see Fig.~\ref{mth4}. Hence, $L'_{0}$ is quasi-alternating by Theorem~\ref{Thm:pre:m}. Moreover, the link $L'_{\infty}$ is the link  obtained from  $L$ by replacing $c$ with tangle $T^{[p]}$, hence  $L'_{\infty}$ is quasi-alternating by Case(3) of Theorem~\ref{thm:nalt}. Since the determinant property holds at $c'$, $c'$ is a quasi-alternating crossing of $L'$.
 
 \noindent Further, if crossing $c'$ is replaced by $q$  vertical half twists of same type, then the resulting link diagram is quasi-alternating at every new crossing by Theorem~\ref{Thm:pre:m}.  Hence, $c'$ is a quasi-alternating crossing of $L'$ and $L'$ is a quasi-alternating link for any $p$ and $q$.\\
   
 \begin{figure}[!ht] {\centering
 \includegraphics[scale=.6]{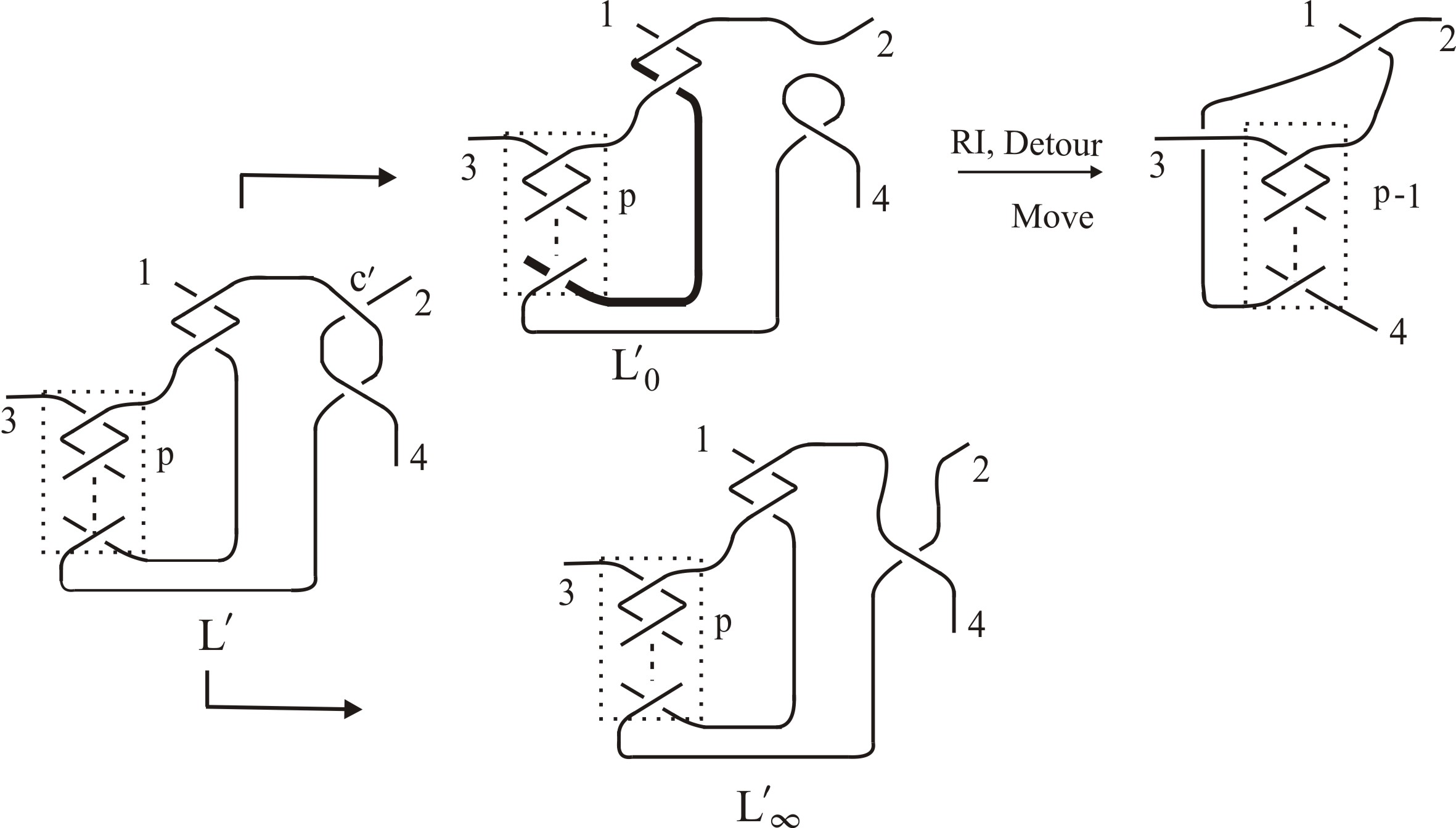} 
 \caption{Links $L'$, $L'_{0}$ and $L'_{\infty}$, when $T=T^{[p,2]}$. }\label{mth4} }
\end{figure} 

\noindent (5) Suppose that $T=T_{p,q}$, as illustrated in Fig.~\ref{Fig.t_pq}.  Take $p=1$. It is evident that the link $L'_{\infty}$ is equivalent to  the link that is obtained from $L$ by replacing crossing $c$ with tangle $T_{[q]}$, and link $L'_{0}$ is the link obtained from $L$ by replacing crossing $c$ with tangle $T_{q-1}$ For $q=1$, $T_{q-1}=T_{0}$, which is equivalent to a tangle of three vertical half twists of opposite type with respect to $c$, see Fig.~\ref{mth0}(b). Since property (I) holds at $c$ and $\det(L_0)<\det(L_{\infty})$, by Lemma~\ref{lemma:opptwist} $L'_{0}$ is quasi-alternating in this case. Otherwise, both   
 $L'_{0}$  and $L'_{\infty}$ are quasi-alternating by case (1) and (2) of Theorem~\ref{thm:nalt}. Since the determinant property holds at $c'$, $c'$ is a quasi-alternating crossing for $p=1$. 
 
\noindent Further, if crossing $c'$ is replaced by $p$ vertical half twists of same type, then the resulting link diagram is quasi-alternating at every new crossing by Theorem~\ref{Thm:pre:m}.  Hence, $c'$ is a quasi-alternating crossing of $L'$ and $L'$ is a quasi-alternating link for any $p$ and $q$.\\

\noindent (6) Suppose that $T=T^{[p,q]}_{\infty}$,  as illustrated in Fig.~\ref{Fig.t^{pq}}. It is evident that $L'_0$ and $L'_{\infty}$ are the links obtained from $L$ by replacing crossing $c$ with tangles $T_{p,q}$ and $T^{p,q}$, respectively. By part (4) and (5) of Theorem~\ref{thm:nalt}, both $L'_0$ and $L'_{\infty}$ are quasi-alternating. Since the determinant condition holds at crossing $c'$, $c'$ is a quasi-alternating crossing of $L'$ and hence $L'$ is a quasi-alternating link.\\

\noindent (7) Suppose that $T=T^{[p,q]}$, as illustrated in Fig.~\ref{Fig.t^[pq]}.  It is evident that $L'_{\infty}$ and $L'_0$ are the links obtained from $L$ by replacing crossing $c$ with tangles $T^{[p,q]}_{\infty}$ and  $T^{[p,q]}_{0}$, respectively. Moreover, $L'_{\infty}$ is quasi-alternating by part(6) of Theorem~\ref{thm:nalt}, and  $L'_0$ is quasi-alternating by Proposition~\ref{prop:mth1}.
Since the determinant condition holds at crossing $c'$, $c'$ is a quasi-alternating crossing of $L'$ and hence $L'$ is a quasi-alternating link. Hence the proof of the theorem.
\end{proof}
\begin{example}
A quasi-alternating crossing $c$ of a link as shown in Fig.~\ref{Exam:2Last}(a) satisfies property (II). By replacing crossing $c$ with tangle $T^{[2,1]}$, the resulting knot, $10_{151}$, depicted in Fig.~\ref{Exam:2Last}(b) is quasi-alternating.
\end{example}
\begin{figure}[!ht] {\centering
 \subfigure[$(2,4)$-torus link] {\includegraphics[scale=.35]{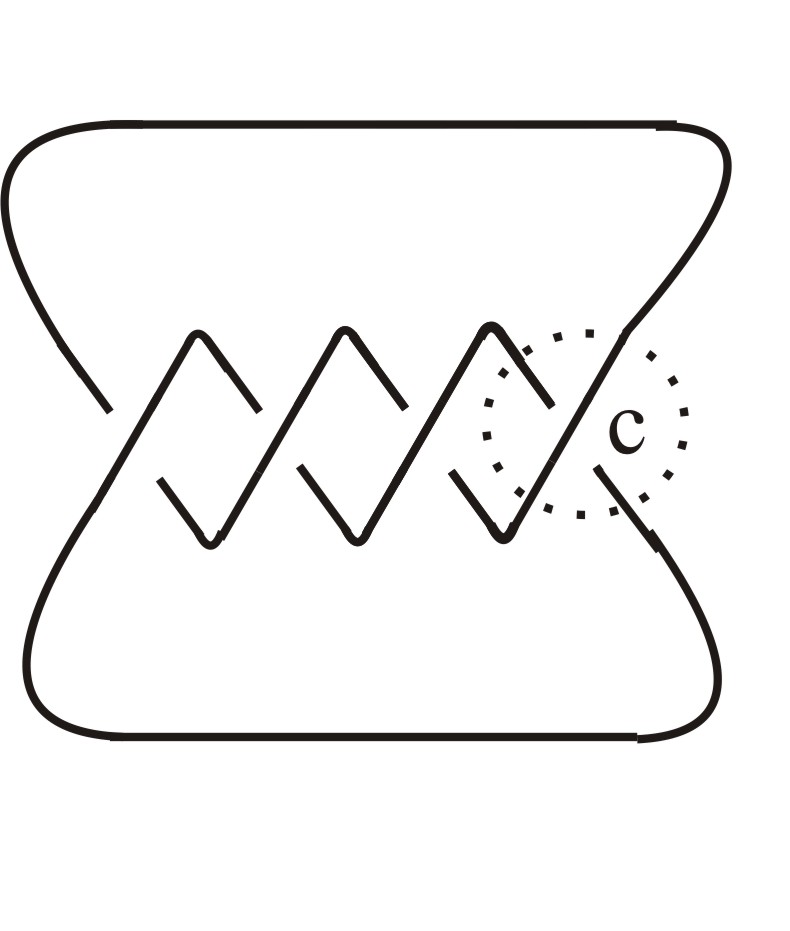} \hspace{.2cm}} \hspace{1.5cm}
\subfigure[$10_{151}$ knot] {\includegraphics[scale=.35]{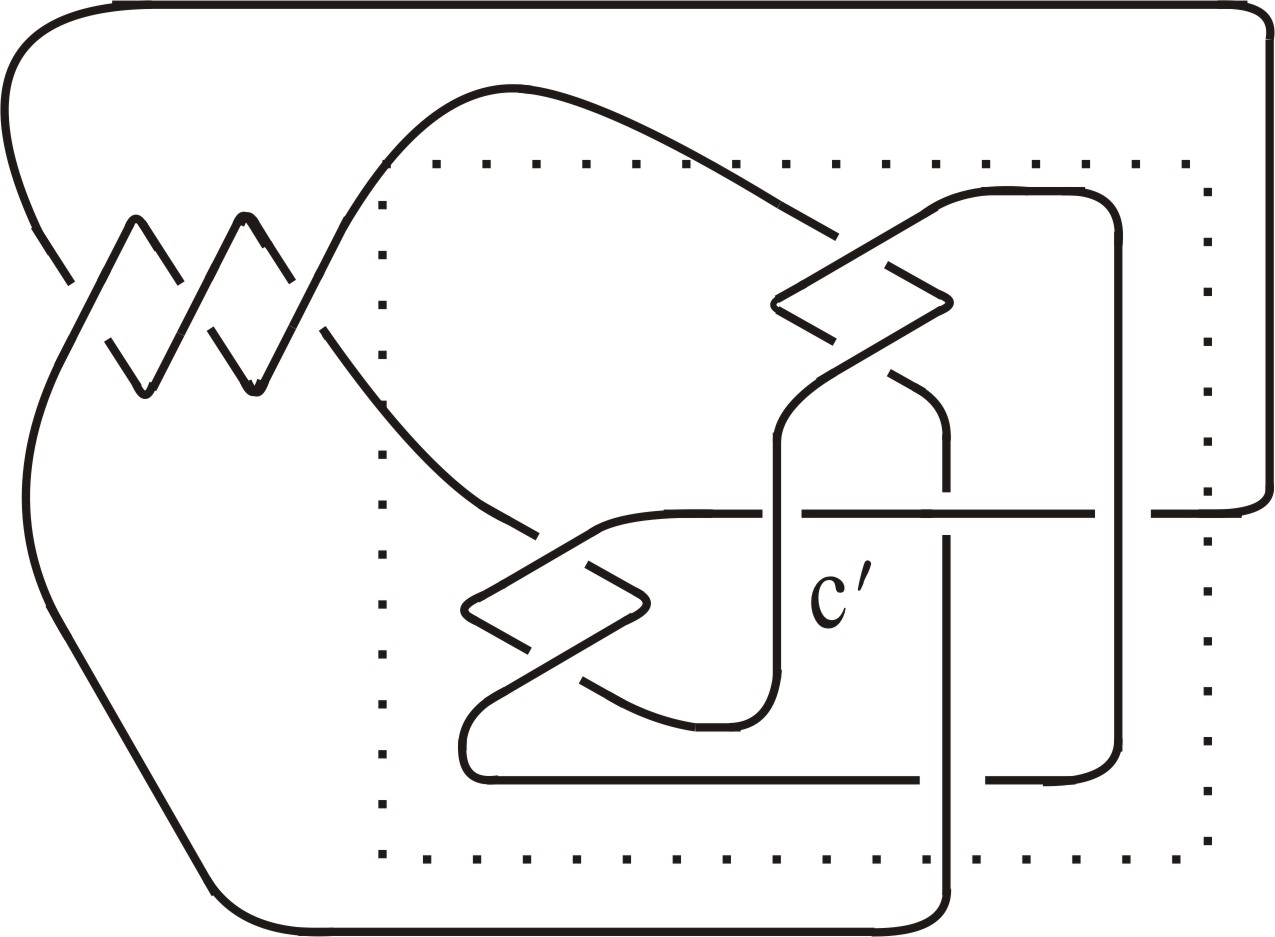}} 
\caption{A quasi-alternating $10_{151}$ knot obtained from $(2,4)$-torus link  by replacing quasi-alternating crossing $c$ satisfying property (II) to a non-alternating tangle $T^{[2,1]}$.}\label{Exam:2Last}}
\vspace{-.6cm}
\end{figure}

\begin{example}
A quasi-alternating crossing $c$ of a link, as shown in Fig.~\ref{exm:In1}(a), does not satisfy property (II). In this case, by replacing crossing $c$ with tangle $T^{[2,1]}$, the resulting link depicted in Fig.~\ref{exm:In1}(b) is not quasi-alternating.
\end{example}
\noindent The following remark is one of the significant implications of our results.
\begin{remark}\label{rem:tp}
Consider a quasi-alternating link $L$ with a quasi-alternating crossing $c$ satisfying property (II) and an alternating tangle $T$ of same type. The tangle $T^p$, shown in Fig.~\ref{Fig.t^p}, is equivalent to an alternating tangle, see Fig.~\ref{Fig.tpalt}. Theorem~\ref{thm:nalt} demonstrates  that the crossing $c'$ of $T^p$ is a quasi-alternating in  link $L'$, where $L'$ is the link obtained from $L$ by replacing crossing $c$ with tangle $T^p$. Clearly, the tangle $T$ is of  same type with respect to $c'$.  
Next, using the non-alternating tangle $T^p$ depicted in Fig.~\ref{Fig.t^p}, we can construct an alternating tangle of opposite type with respect to $c$, for which our construction remains valid. This construction involves the replacement of crossing $c'$ with an alternating tangle, as shown in Fig.~\ref{Fig.tpopp}. However, it is not possible to construct this alternating tangle from the alternating diagram of $T^p$ depicted in Fig.~\ref{Fig.tpalt}, by replacing any of its crossing with a tangle of $n+1$ crossings. 
\end{remark}
 \begin{figure}[!ht] {\centering
 \includegraphics[scale=0.5]{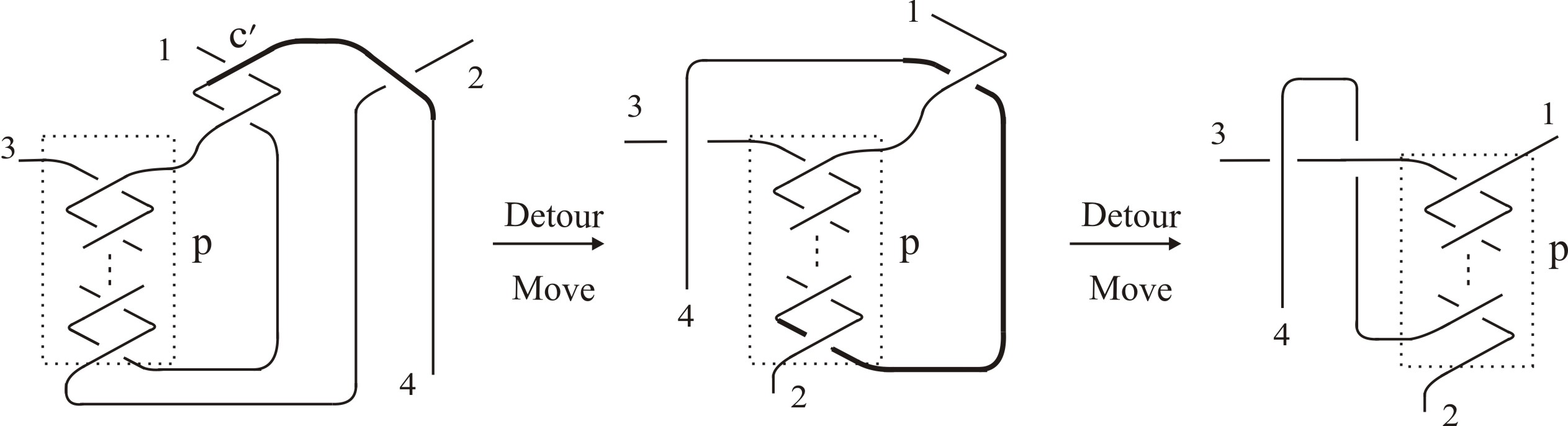} 
\caption{ $T^p$ is equivalent to an alternating tangle.}\label{Fig.tpalt}}\end{figure}

 \begin{figure}[!ht] {\centering
 
\includegraphics[scale=0.5]{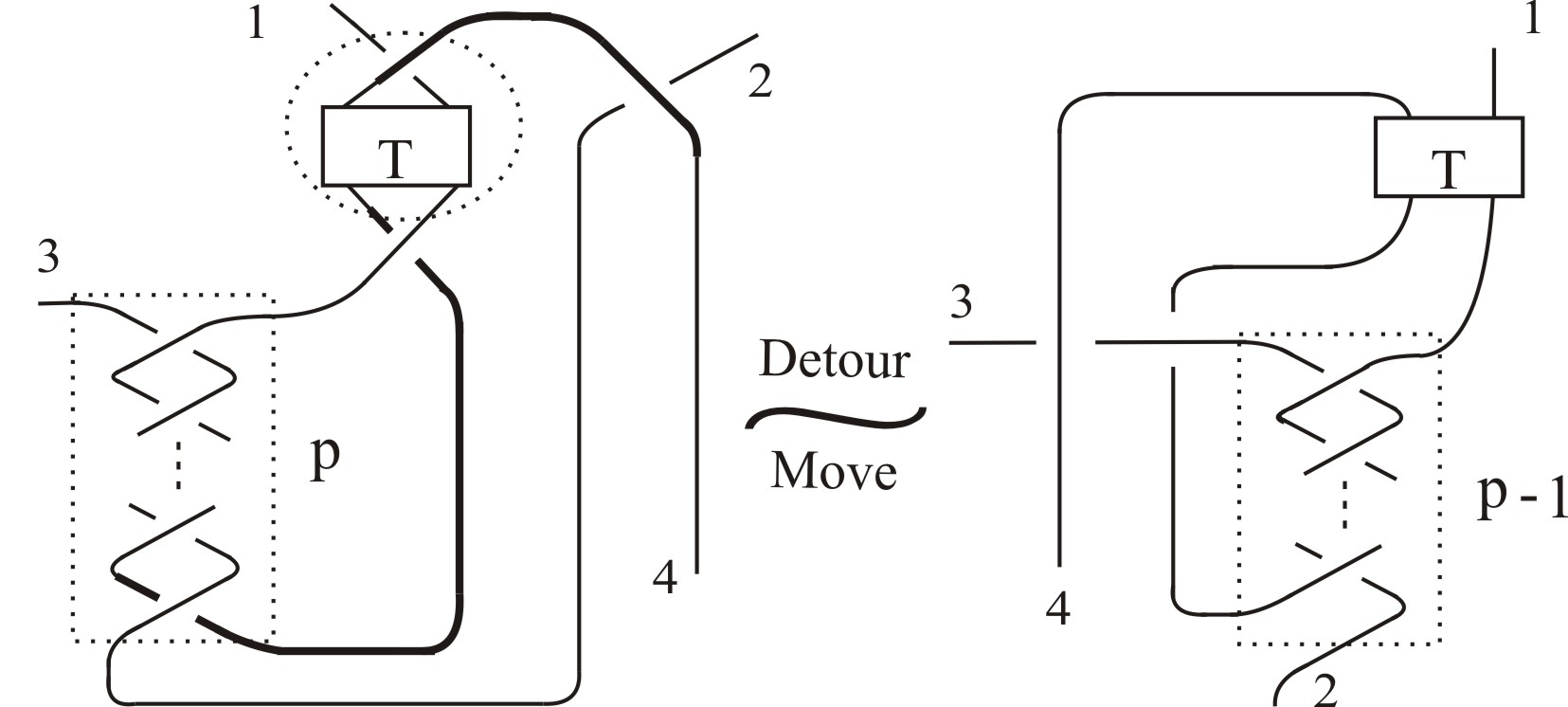}
\caption{ An alternating tangle of opposite type obtained from $T^p$.}\label{Fig.tpopp}}\end{figure}

\begin{theorem}\label{Thm:last} 
Let $L$ be a quasi-alternating link diagram with a quasi-alternating crossing $c$ and let $T\in \Omega$ be a non-alternating tangle. Then a link $L'$ obtained from $L$ by replacing crossing $c$ with a non-alternating tangle $\chi_T$ or $\chi_{\overline{T}}$ is quasi-alternating.  
\end{theorem}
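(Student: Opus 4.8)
The plan is to reduce the statement, exactly as in the other results of this section, to showing that a single distinguished crossing of the inserted tangle is quasi-alternating. Let $c'$ be the crossing of $\chi_T$ (respectively $\chi_{\overline{T}}$) indicated in Fig.~\ref{chiT}, let $L'$ be the link obtained by replacing $c$ with $\chi_T$, and let $L'_0$ and $L'_\infty$ be its two smoothings at $c'$. If I can show that $L'_0,L'_\infty\in\mathcal{Q}$ and that the determinant condition $\det(L')=\det(L'_0)+\det(L'_\infty)$ holds with both summands at least $1$, then $c'$ is a quasi-alternating crossing and $L'\in\mathcal{Q}$, which is the conclusion. The first task is therefore to recognise the two smoothings geometrically: I would use planar isotopy together with the detour moves reviewed in Section~2 to identify each of $L'_0,L'_\infty$ with a link obtained from the \emph{original} diagram $L$ by replacing $c$ with a simpler tangle already handled earlier.

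The essential point is that, since Theorem~\ref{Thm:last} places no hypothesis on $c$ beyond being quasi-alternating, this identification must land in the \emph{property-free} results of the section. I expect one smoothing of $c'$ to collapse $\chi_T$ into $L$ with $c$ replaced by a reduced alternating tangle of the \emph{same} type as $c$, so that it is quasi-alternating by Theorem~\ref{Thm:pre:m}; and the other to collapse $\chi_T$ into $L$ with $c$ replaced by a tangle of the $T^{[p,q]}_{0}$ shape, so that it is quasi-alternating by Proposition~\ref{prop:mth1}. Crucially, both of these results hold for an arbitrary quasi-alternating crossing, so no appeal to property (I) or (II) is made. The mirror tangle $\chi_{\overline{T}}$ is then dispatched by the same argument applied to the reflected diagram, with the roles of the two smoothings interchanged.

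For the determinant condition I would reuse the spanning-tree bookkeeping from the proof of Lemma~\ref{lem:nalt}. Fixing the checkerboard colouring that makes the edge $e$ of $c$ positive, and writing $\mathfrak{a},\mathfrak{b}$ as in Remark~\ref{rem:ab} (so that $\mathfrak{a}\mathfrak{b}>0$), I read off from the Tait graph $\mathcal{G}(\chi_T)$ the counts $x_i,y_i$ and their sub-counts $x_{i_{e'}},y_{i_{e'}}$ through $e'$, and substitute them into the master identities~(\ref{eqL'}), (\ref{eqL'1}), (\ref{eqL'2}). This writes $\det(L')$, $\det(L'_0)$, $\det(L'_\infty)$ as integer combinations of $\mathfrak{a}$ and $\mathfrak{b}$, and the additivity reduces to checking that the combinations giving $\det(L'_0)$ and $\det(L'_\infty)$ share a common sign. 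Because $\chi_T$ is built so that its two smoothings are the property-free replacements above, I expect the coefficients of $\mathfrak{a}$ and of $\mathfrak{b}$ in both expressions to be nonnegative; since $\mathfrak{a}\mathfrak{b}>0$, both expressions then carry the common sign of $\mathfrak{a}$, the product is positive, and additivity follows \emph{unconditionally}.

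The hard part will be the first step: confirming that, for every family $T\in\Omega$, the two smoothings of $c'$ route into Theorem~\ref{Thm:pre:m} and Proposition~\ref{prop:mth1} rather than into the property-dependent Proposition~\ref{prop:mth2} or Theorem~\ref{thm:nalt}; were even one smoothing to require property (I) or (II), the hypothesis-free statement would fail. A secondary difficulty is that the sign bookkeeping for $\mathcal{G}(\chi_T)$ may have to be verified separately for each of the seven families making up $\Omega$, precisely as the seven cases were carried out in Lemma~\ref{lem:nalt}, unless the geometry of $\chi_T$ lets one treat all of them by a single uniform reduction.
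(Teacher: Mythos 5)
Your proposal misses the key idea of the paper's argument, and the step you yourself flag as ``the hard part'' is where it breaks down. The paper does not analyse any crossing of $\chi_T$ directly. Instead it observes that replacing $c$ by $\chi_T$ (resp.\ $\chi_{\overline{T}}$) factors as a composition of two moves that are each already justified: first replace $c$ by two horizontal half twists of the \emph{same} type, obtaining $L^{-2}$, which is quasi-alternating at the two new crossings $c_1,c_2$ by Theorem~\ref{Thm:pre:m}; then note that, by Lemma~\ref{lemma:sametwist} and the argument in the proof of Theorem~\ref{Thm1}, each $c_i$ \emph{automatically} satisfies property (II) in $L^{-2}$ --- switching $c_i$ returns $L_{0}$, which is quasi-alternating, and $\det(L^{-2}_{0})<\det(L^{-2}_{\infty})$ holds unconditionally for same-type horizontal twisting --- so Theorem~\ref{thm:nalt} applies at $c_i$ with the tangle $T$, and the resulting link is exactly $L$ with $c$ replaced by $\chi_T$ or $\chi_{\overline{T}}$. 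This is why the theorem needs no hypothesis on $c$: the property (II) that Theorem~\ref{thm:nalt} requires is manufactured at the intermediate crossings $c_1,c_2$, not assumed at $c$.

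Your plan, by contrast, hopes that the two smoothings of a single distinguished crossing of $\chi_T$ both land in property-free results (Theorem~\ref{Thm:pre:m} and Proposition~\ref{prop:mth1}). They do not. The tangle $\chi_T$ is $T$ composed horizontally with one extra half twist; if you smooth that extra crossing, one of the two smoothings collapses the twist to the identity for horizontal composition and yields $L$ with $c$ replaced by $T$ itself --- precisely the situation of Theorem~\ref{thm:nalt}, which requires property (II) at $c$ and is therefore unavailable under the hypotheses of Theorem~\ref{Thm:last}. Choosing instead the distinguished crossing $c'$ of $T$ inside $\chi_T$ just amounts to re-proving Theorem~\ref{thm:nalt} with base link $L^{-2}$, which again forces you to establish property (II) at the intermediate crossing --- i.e.\ to rediscover the paper's reduction. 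So the direct smoothing-plus-spanning-tree route, as you describe it, either fails or reduces to the compositional argument; the missing ingredient is the observation that same-type twisting creates crossings satisfying property (II) for free.
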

\begin{proof}
Consider a quasi-alternating link diagram $L$ with a quasi-alternating crossing $c$ and let $T\in \Omega$ be any tangle. Let $c_1$ and $c_2$ be two horizontal half twisted crossings of $L^{-2}$ as illustrated Fig.~\ref{exm:last}. It is evident from the proof of Theorem~\ref{Thm1} that both $c_1$ and $c_2$ are  quasi-alternating crossings of $L^{-2}$ satisfying property (II). Replacing crossing $c_i$ in $L^{-2}$ with tangle $T$, the resulting link is a quasi-alternating link by Theorem~\ref{thm:nalt}. Hence the desired result.
\end{proof}

\begin{figure}[!ht] {\centering
 \subfigure[crossing $c$]{\hspace{.8cm}\includegraphics[scale=0.55]{Fig/Lthm0.jpg} \hspace{.7cm}} 
\subfigure[$L^{-2}$]{\includegraphics[scale=0.47]{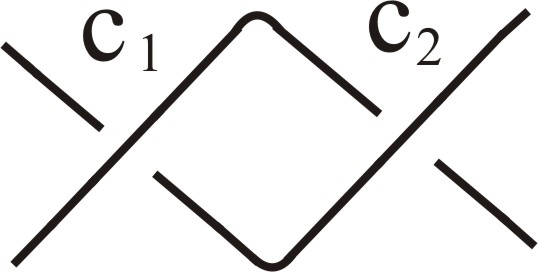}} \hspace{.5cm}
 \subfigure[$\chi_{T}$]{\includegraphics[scale=0.5]{Fig/Lthm1.jpg}} \hspace{.5cm}
\subfigure[$\chi_{\overline{T}}$]{\includegraphics[scale=0.5]{Fig/Lthm2.jpg}}
\caption{A crossing $c$ and non-alternating tangles with $T\in \Omega$.}\label{exm:last}}
\end{figure}

%\begin{example}
%A quasi-alternating crossing $c$ of a link as illustrated in Fig.~\ref{exm:In1}(a) does not satisfies property (II). Replacing crossing $c$ with tangles $\chi_{T^{[2,1]}}$ and $\chi_{\overline{T^{[2,1]}}}$ , the resulting links as illustrated in Fig.~\ref{exm:In1}(b)and (), respectively, are  quasi-alternating.
%\end{example}

\section{Conclusion}

\noindent In this paper, we investigate the conditions necessary for constructing new quasi-alternating links by replacing a quasi-alternating crossing with alternating tangles of the opposite type. We also identify several families of non-alternating tangles that meet certain criteria for this construction to be valid. The examples illustrated in Fig.~\ref{exm:8_1 & 11n}(a), (b) and  \ref{Exam:2Last}(b)  support our proposed construction under the identified conditions. 
 
\noindent However, it is important to note that the conditions we established are sufficient but not necessary for the newly constructed link to be quasi-alternating. For instance, the crossing labeled $c$ in Fig.~\ref{exm:43} does not satisfy property (III), yet the knot obtained by replacing crossing $c$ with three horizontal half-twists is the quasi-alternating knot $11_{n88}$. 
 Similarly, replacing crossing $c$ in  Fig.~\ref{exm:In1}(a) with five vertical twists results in a quasi-alternating link, even though property (II) is not satisfied at $c$.

\noindent The results of this paper contribute to the construction and classification of new quasi-alternating knots and links with higher crossing number. Additionally, there may be other conditions and specific families of alternating tangles of the opposite type, as well as non-alternating tangles, for which this construction is applicable. 

\section{Acknowledgement}
\noindent The authors would like to thank Prof. Madeti Prabhakar and Prof. Nafaa Chibli for their remarks and suggestions during  the preparation of this paper. First author was supported by  National Board of Higher Mathematics (NBHM)(Ref. No. 0204/16(11)/2022/R $\&$D-II/11983), Government of India.

\end{document}